\newtheorem*{rep@theorem}{\rep@title}
\newcommand{\newreptheorem}[2]{%
\newenvironment{rep#1}[1]{%
 \def\rep@title{#2 \ref{##1}}%
 \begin{rep@theorem}}%
 {\end{rep@theorem}}}
\newtheorem{theorem}{Theorem}[section]
\newtheorem{proposition}[theorem]{Proposition}
\newtheorem{lemma}[theorem]{Lemma}
\newtheorem{corollary}[theorem]{Corollary}
\newtheorem{conjecture}[theorem]{Conjecture}
\theoremstyle{definition}
\theoremstyle{remark}
\newtheorem*{remark}{Remark}
\newcommand{\dd}{\delta}
\newcommand{\CC}{\mathbb{C}}
\newcommand{\EE}{\mathbb{E}}
\newcommand{\PP}{\mathbb{P}}
\newcommand{\RR}{\mathbb{R}}
\newcommand{\ZZ}{\mathbb{Z}}
\DeclareMathOperator{\GW}{GW}
\DeclareMathOperator{\DiscDerv}{DiscDerv}
\DeclareMathOperator{\DiscGrad}{DiscGrad}
\DeclareMathOperator{\DLip}{DLip}
\DeclareMathOperator{\DiscGW}{DiscGW}
\DeclareMathOperator{\Bernoulli}{Bernoulli}
\DeclareMathOperator{\Normal}{Normal}
\begin{document}

% ----- Article info ----
\title[Upper tail large deviations for arithmetic progressions]{Upper tail large deviations for arithmetic progressions in a random set}

\author[Bhattacharya]{Bhaswar B. Bhattacharya}\address{Department of Statistics\\ University of Pennsylvania\\ Philadelphia, PA 19104 \\ USA}
\email{bhaswar@wharton.upenn.edu}

\author[Ganguly]{Shirshendu Ganguly}
\address{Department of Statistics\\ UC Berkeley \\ California, CA 94720 \\ USA}
\email{sganguly@berkeley.edu}
\thanks{SG was supported by a Miller Research Fellowship}

\author[Shao]{Xuancheng Shao}
\address{Department of Mathematics \\ University of Kentucky 
\\ Lexington, KY 40506 \\ USA}
\email{xuancheng.shao@uky.edu}
\thanks{XS was supported by a Glasstone Research Fellowship.}

\author[Zhao]{Yufei Zhao}
\address{Department of Mathematics \\ Massachusetts Institute of Technology\\ Cambridge, MA 02139 \\ USA}
\email{yufeiz@mit.edu}
\thanks{YZ was supported by an Esm\'ee Fairbairn Junior Research Fellowship at New College, Oxford and NSF Award DMS-1362326.}

\begin{abstract}
Let $X_k$ denote the number of $k$-term arithmetic progressions in a random subset of $\mathbb{Z}/N\mathbb{Z}$ or $\{1, \dots, N\}$ where every element is included independently with probability $p$. 
We determine the asymptotics of $\log \mathbb{P}(X_k \ge (1+\delta) \mathbb{E} X_k)$ (also known as the large deviation rate) where $p \to 0$ with $p \ge N^{-c_k}$ for some constant $c_k > 0$, which answers a question of Chatterjee and Dembo. 
The proofs rely on the recent nonlinear large deviation principle of Eldan, which improved on earlier results of Chatterjee and Dembo. 
Our results complement those of Warnke, who used completely different methods to estimate, for the full range of $p$, the large deviation rate up to a constant factor.
\end{abstract}

\maketitle

\setcounter{tocdepth}{1}
\tableofcontents

\section{Introduction}

Let $X_k$ denote the number of $k$-term arithmetic progressions ($k$-AP) in the random set $\Omega_p$, where $\Omega$ is taken to be either $\Z/N\Z$ or $[N] := \{1, \dots, N\}$ throughout this paper, and $\Omega_p$ denotes the random subset of $\Omega$ where every element is included independently with probability $p$. The \emph{upper tail problem} asks to estimate the probability that $X_k$ significantly exceeds its expectation.  This problem has received some interest over the years~\cite{CD,JR11,War}. More generally, the problem of computing tail probabilities of a sum of weakly dependent random variables has a long and interesting history \cite{BGLZ,Cha12,CD,CD10,CV11,DK12b,DK12,JOR04,JR02,JR04,JR11,KV04,LZ-sparse,LZ15,Vu01,War}. There has been many exciting recent developments, particularly in the setting of random graphs~\cite{Cha12,CD,DK12}, where one is interested in the concentration of the number of triangles in an Erd\H{o}s--R\'enyi random graph $\cG(N, p)$. See Chatterjee's recent survey~\cite{ChaBAMS} and the references therein for an introduction to recent developments on large deviations in random graphs.

The recent work of Warnke~\cite{War} settles the question of the asymptotic order of $\PP(X_k \ge (1+\delta)\EE X_k)$ when $\Omega=[N]$. Warnke~\cite{War} shows that for fixed $\delta > 0$ and $k\ge 3$, there exists constants $c, C > 0$ (depending only on $k$) such that 
\begin{equation}\label{eq:logP-order}
p^{C \sqrt{\delta}  N p^{k/2}} \leq \PP(X_k \ge (1+\delta) \EE X_k) \leq  p^{c \sqrt{\delta}  N p^{k/2}},
\end{equation}
as long as $p = p_N \ge (\log N/N)^{1/(k-1)}$ and $p$ is bounded away from 1. 
Prior to Warnke's work, the best upper bound~\cite{JR11} was $\PP(X_k \ge (1+\delta) \EE X_k) \leq e^{-c_\delta p^{k/2} N}$, for some constant $c_\delta > 0$ depending on $k$ and $\delta$. However, the natural question of precise asymptotics still remained open. 
 
The main result of this paper shows for every $k \ge 3$, fixed $\delta>0,$ and $\Omega=[N],$ if $p=p_N \ge N^{-\frac{1}{6 k(k-1)}}\log N$ and $p \to 0$,  then, as $N \to \infty$,
\begin{equation}
	 \label{eq:logP-kAP-asymp}
 \PP(X_k \ge (1+\delta) \EE X_k)
=  p^{(1+o(1)) \sqrt{\delta} N p^{k/2}}.
\end{equation} 
The lower bound to the probability can be seen by forcing an interval of length $(1+o(1)) \sqrt{\delta}p^{k/2} N$  to be present in $\Omega$, so that it generates the extra $\delta \EE X_k$ many $k$-APs as desired. For the special case of $k=3$, which was also treated in \cite{CD}, methods in \cite{Eldan} combined with Fourier analysis allow us to take $p \ge N^{-1/18} \log N$, improving on the $p \ge N^{-1/162} (\log N)^{34/162}$ hypothesis in \cite{CD}. For $k\ge 4$, \eqref{eq:logP-kAP-asymp} is the first large deviation result for $k$-AP counts allowing $p$ to decay as $N^{-c}$,
thereby answering  a question posed by Chatterjee and Dembo \cite[Section~1.8]{CD} and improving on Warnke's result in the appropriate regime.\footnote{In a previous arXiv version of this paper, we proved the result with $p$ decaying extremely slowly, as our previous proof depended on the heavy-powered inverse theorem for Gowers uniformity norms due to Green, Tao, and Ziegler~\cite{GTZ12}.}

The proofs rely on the powerful nonlinear large deviation principle (LDP) developed by Chatterjee and Dembo~\cite{CD}, which was recently improved by Eldan~\cite{Eldan} using different methods, namely stochastic control theory. These LDPs reduce the determination of the large deviation rate (i.e., asymptotics of log-probability) in many combinatorial problems to a natural variational problem involving entropies.  For the problem of upper tails of subgraph counts in a sparse random graph, the corresponding variational problem was recently solved \cite{BGLZ,LZ-sparse}.
For arithmetic progressions, Chatterjee and Dembo were able to verify the hypotheses of their LDP for 3-term arithmetic progressions but not longer ones \cite[Section 1.5]{CD}. 
More recently, Eldan \cite{Eldan} provided a different, but related, set of hypotheses for his LDP, involving the supremum of an associated Gaussian process (see Theorem \ref{thm:eldan}). 
We prove the necessary bounds to apply Eldan's LDP for arithmetic progressions of arbitrary fixed length. We remark that similar arguments can also be used to verify Chatterjee and Dembo's LDP hypotheses for  arithmetic progressions of any fixed length. Some interesting open problems in additive combinatorics arise in the analysis of this Gaussian process (see Section~\ref{sec:gw}).

After establishing the LDP, we solve the corresponding variational problem. Here, there are two regimes, below, depending on how $\delta$ decays to zero compared to $p$.

\begin{itemize}

\item[(1)] In the \emph{macroscopic} (large $\delta$) regime, where $\delta^{-3} p^{k-2}(\log(1/p))^2 \to 0$, the solution of the variational problem reduces to the extremal problem of maximizing the number of $k$-APs in a set of given size, which was solved by Green and Sisask \cite{GS08} for 3-APs, and extended to $k$-APs in Theorem~\ref{thm:maxAP}. The solution to this extremal problem is attained by an interval. The case of fixed $\delta > 0$, namely the asymptotic \eqref{eq:logP-kAP-asymp}, belongs to this regime.

\item[(2)] In the \emph{microscopic} (small $\delta$) regime, where $\delta^{-3} p^{k-2}(\log(1/p))^2 \to \infty$, 
the variational problem exhibits a rather different qualitative behavior compared to the previous case. We show that
\[
\PP(X_k \ge (1+\delta) \EE X_k) = e^{-(c+o(1))\delta^2 N p},
\]
for some explicit constant $c > 0$ depending on $k$ and $\Omega$.
\end{itemize}

Note that in both these regimes above we require $p=p_N \rightarrow 0$. If both $p$ and $\delta$ are fixed, the situation is quite different, and we report some partial results for this setting in Section \ref{sec:replica}.  

\medskip

It is worth comparing these results for arithmetic progressions to the corresponding results for triangles in a random graph. Let $X_{K_3}$ denote the number of copies of $K_3$ in $\cG(N,p)$. It was shown in~\cite{LZ-sparse} (again relying on \cite{CD}, which was recently improved in \cite{Eldan}) that for $p = p_N \to 0$ with $p \ge N^{-1/18}\log N$ and fixed $\delta > 0$, one has
\[
\PP(X_{K_3} \ge (1+\delta) \EE X_{K_3}) = p^{(1+o(1)) \min\{\delta^{2/3}/2, \, \delta/3\} N^2p^2}.
\]
This result was extended in \cite{BGLZ}, which determined the upper tail large deviation rate of $X_H$  for every graph $H$. The extra complexity in the above expression, as compared to~\eqref{eq:logP-kAP-asymp}, arises from the dichotomy of methods of generating many extra triangles: we can either force a clique to be present, or force a small subset of vertices to be connected to all other vertices.

\section{Statements of results}
\label{sec:thm}

\subsection{Notation}
\label{sec:notation}
We recall some standard asymptotic notations. For two nonnegative sequences $(f_n)_{n\geq 1}$ and $(g_n)_{n\geq 1}$, $f_n \lesssim g_n$ means $f_n=O(g_n)$; $f_n \sim g_n$ means $f_n = (1+o(1))g_n$; and $f_n \asymp g_n$ means $f_n = \Theta(g_n)$, i.e., $f_n \lesssim g_n \lesssim f_n$. Subscripts in the above notation,  for example, $O_\square(\cdot)$, $\lesssim_\square$, $\asymp_\square$ denote that the hidden constants may depend on the subscripted parameters. We always treat $k$ (as in $k$-AP) as a constant, and the dependence of the hidden constants on $k$ is always implicitly assumed and may be suppressed in the asymptotic notation.

For any set $A$ in some ambient abelian group (in this paper the ambient group will always be either $\Z$ or $\Z/N\Z$) and $k \ge 3$, we write $T_k(A)$ to denote the number of pairs $(a,b)$ of elements in the ambient group such that $a,a+b,a+2b,\dots,a+(k-1)b \in A$. Note that every non-trivial $k$-AP is counted twice, and every trivial $k$-AP (i.e., $b=0$) is counted once. It will be convenient to state our results in terms of $T_k(A)$.

For $p\in (0, 1)$ and a subset $\Omega$ in the ambient group, denote by $\Omega_p\subset \Omega$ the random set obtained by independently including each element in $\Omega$ with probability $p$. Throughout the paper, we will consider two settings:
\begin{enumerate}
\item[(1)] $\Omega = [N] = \{1, \dots, N\}$, and
\item[(2)] $\Omega = \Z / N \Z$.
\end{enumerate}
The ambient group is $\Z$ and $\Z/N\Z$ in the two respective settings. Note that, as long as $p^{k-1} N \to \infty$, it is easy to see that $\EE T_k(\Omega_p) = p^k(T_k(\Omega) - |\Omega|) + p|\Omega| \sim p^k T_k(\Omega)$. In this paper we are interested in the {\it upper tail probability},  $\P(T_k(\Omega_p)\geq (1+\delta) \E T_k(\Omega_p)),$ when $p=p_N\rightarrow 0$, as $N\rightarrow \infty$ ($\delta = \delta_N$ may also depend on $N$).

The relative entropy function with respect to $\Bernoulli(p)$ is denoted
\[
I_p(x) := x \log \frac{x}{p} + (1-x) \log\frac{1-x}{1-p}.
\]
Finally, denote the weighted $k$-AP count of a function $f: \Omega\rightarrow \R$ by
\begin{align}
T_k(f) := \sum_{a,b} f(a)f(a+b)\dotsm f(a+(k-1)b).
\label{kap}
\end{align}
Here $a$ and $b$ each range over all elements of the ambient group (either $\Z$ or $\Z/N\Z$) such that $\{a,a+b,\dots,a+(k-1)b\}\subset \Omega$. By convention, when $\Omega = [N]$, we set $f(x) = 0$ for all $x \notin \Omega$.
Note that $T_k(A) = T_k(\bm 1_A)$, where $\bm 1_A$ is the indicator function of $A$.

\subsection{Large deviation principle}
\label{sec:statements-ldpkap}

Let us write
\begin{equation} \label{eq:var}
\phi_p^{(k,\Omega)}(\delta) :=  \mkern-18mu \inf_{f \colon \Omega \to [0,1]}\left\{ \sum_{a \in \Omega} I_p(f(a)):  T_k(f) \ge (1+\delta)p^k T_k(\Omega) \right\}
\end{equation}
for the natural large deviations variational problem for upper tails of $k$-AP counts. 
We will establish in Section~\ref{sec:ldpgw} the following LDP for $k$-APs, via Eldan's LDP~\cite{Eldan}.

\begin{theorem} \label{thm:ldpkap}
Fix $k \ge 3$. Let $\Omega = [N]$ or $\Z/ N \Z$. Let $p = p_N$ be bounded away from $1$, and $\delta = \delta_N > 0$ with $\delta = O(1)$ such that
\begin{equation} \label{eq:hyp-p-lower}
\min\{\delta p^k, \delta^2p\} \ge N^{-\frac{1}{6(k-1)}} \log N.
\end{equation}
Then, as $N \to \infty$,
\begin{equation}\label{eq:ldp-kap}
- \log \PP(T_k(\Omega_p) \ge (1 + \delta)\EE T_k(\Omega_p)) \\
= (1+o(1))\phi_p^{(k,\Omega)}(\delta + o(\delta)).
\end{equation}
Furthermore, for $k = 3$, the right-hand side of \eqref{eq:hyp-p-lower} can be relaxed to $N^{-1/6}(\log N)^{7/6}$; for $k=4$, it can be relaxed to $N^{-1/12} (\log N)^{13/12}$.
\end{theorem}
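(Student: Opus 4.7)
The plan is to invoke Eldan's nonlinear large deviation principle (Theorem~\ref{thm:eldan}) with the multilinear polynomial $T_k(\omega) = \sum_{a,b} \omega_a \omega_{a+b} \cdots \omega_{a+(k-1)b}$ viewed as a function of $\omega \in \{0,1\}^{\Omega}$. Eldan's theorem asserts that, provided the gradient field $\nabla T_k$ satisfies a certain Gaussian-complexity bound, the log-probability $-\log \mathbb{P}(T_k(\Omega_p) \ge (1+\delta)\mathbb{E} T_k(\Omega_p))$ equals $(1+o(1))$ times the entropic variational quantity $\phi_p^{(k,\Omega)}(\delta+o(\delta))$. Once the complexity bound is established, the conclusion \eqref{eq:ldp-kap} is immediate, and the threshold on $\min\{\delta p^k,\delta^2 p\}$ in \eqref{eq:hyp-p-lower} will be determined precisely by the quality of this bound.

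The main task, therefore, is to bound the supremum of the Gaussian process
\[
\Phi(\omega;g) := \langle g, \nabla T_k(\omega)\rangle = \sum_{j=0}^{k-1} \sum_{a,b} g_{a+jb}\prod_{i \neq j}\omega_{a+ib}
\]
over $\omega \in [0,1]^{\Omega}$, where $g = (g_x)_{x \in \Omega}$ is a standard Gaussian vector. I would attack this by a Cauchy--Schwarz / van der Corput induction on the slot index $j$: at each step one bounded coordinate of $\omega$ is absorbed into a Gaussian weight, replacing the current $k$-linear form by a $(k-1)$-linear one whose coefficients are Gowers-type box-averages of $g$. After $k-1$ iterations the quantity to be controlled is essentially the $U^{k-1}$-Gowers norm of $g$ (or an $L^{2^{k-1}}$ analogue), which concentrates sharply around its Gaussian expectation. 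For $k=3$ one can short-circuit the induction by Fourier analysis on $\mathbb{Z}/N\mathbb{Z}$ (with a Fej\'er-kernel transfer to handle $\Omega=[N]$), and for $k=4$ one Fourier step plus one Cauchy--Schwarz suffices; this is the source of the improved thresholds stated at the end of the theorem.

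The main obstacle is quantitative sharpness. Each Cauchy--Schwarz step loses a power of $p$, so after $k-1$ iterations the cumulative loss is of order $p^{-\Theta(1/(k-1))}$, which is precisely what produces the $N^{-1/(6(k-1))}$ threshold in \eqref{eq:hyp-p-lower}. To avoid the much worse losses incurred by the previous arXiv version, which routed through the Green--Tao--Ziegler inverse theorem (see the footnote), one must stay entirely on the analytic side and work with sub-Gaussian concentration of Gowers-norm moments of $g$, rather than with qualitative structure theorems. The remaining step, promoting the pointwise-in-$\omega$ bound to a uniform bound over $\omega \in [0,1]^{\Omega}$, is a routine chaining argument, since $\Phi$ is linear in each $\omega_a$ and a polynomial-size $\varepsilon$-net on $[0,1]^{\Omega}$ already suffices by a union bound against the Gaussian tails.
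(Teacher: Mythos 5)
Your skeleton agrees with the paper's --- invoke Eldan's LDP (here Proposition~\ref{prop:ap-gw-ldp}), then reduce everything to an upper bound on $\GW(T_k/N)$ --- and for $k=3$ (Fourier) and $k=4$ (one Cauchy--Schwarz then Fourier) your plan matches Section~\ref{sec:grad-3ap} and Appendix~\ref{app:gw} exactly. For general $k$, however, you propose a genuinely different route through the Gaussian-width estimate. The paper does \emph{not} iterate Cauchy--Schwarz toward a $U^{k-1}$-type quantity. Instead, Proposition~\ref{prop:T_k-hull-size} shows that $\{\nabla T_k(f)/N : f \in \{0,1\}^\Omega\}$ lies in the convex hull of only $\exp(O(N^{1-1/(k-1)}))$ uniformly bounded functions, by partitioning $[N]$ into residue classes modulo $k-1$ primes of size roughly $N^{1/(k-1)}$ and invoking the Chinese remainder theorem to control the number of nonzero terms in each ``vertex'' function; combined with Lemma~\ref{lem:small-set-small-gw} this yields $\GW(T_k/N) = O(N^{1-1/(2(k-1))})$. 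Your iterated van-der-Corput approach is in the spirit of the Bri\"et--Gopi improvement cited after Theorem~\ref{thm:gw-bound}, and if executed carefully it would likely give the sharper exponent $1-\tfrac{1}{2\lceil (k-1)/2\rceil}$ rather than merely reproducing the paper's bound --- so the route is plausible, but you should be aware it is not the one taken here. Two concrete errors in your accounting: first, the losses incurred at each Cauchy--Schwarz step are powers of $N$, not of $p$ --- $p$ plays no role whatsoever in the Gaussian-width bound, and the threshold $N^{-1/(6(k-1))}$ in \eqref{eq:hyp-p-lower} instead arises because a bound $\GW(T_k/N)=O(N^{1-\sigma})$ with $\sigma=\tfrac{1}{2(k-1)}$ enters Eldan's theorem and its error terms contribute the factor $N^{-\sigma/3}$ (see \eqref{eq:p-lower-sigma-tau}). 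Second, your closing step --- ``promoting the pointwise-in-$\omega$ bound to a uniform bound via a polynomial-size $\varepsilon$-net on $[0,1]^\Omega$'' --- is both incorrect and unnecessary: $[0,1]^\Omega$ admits no polynomial-size net in any relevant metric, and a union bound over the $2^{|\Omega|}$ vertices only recovers the trivial $\GW(T_k/N)=O(N)$. The entire point of the Cauchy--Schwarz iteration (exactly as in the $k=4$ proof in Appendix~\ref{app:gw}) or of the paper's convex-hull covering is that the resulting bound on $\langle g, \nabla T_k(\omega)\rangle$ has had its $\omega$-dependence completely eliminated, so no net or chaining over $\omega$ is ever needed.
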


\begin{remark}
For fixed $\delta > 0$, the theorem requires $p$ to decay slower than $N^{-\frac{1}{6k(k-1)}}$. Very recently, Bri\"et and Gopi \cite{BG} improved the exponent from $\frac{1}{6k(k-1)}$ to $\frac{1}{6k\lceil (k-1)/2\rceil}$, which improves our result for all $k \ge 5$. It remains an open problem to extend the range of validity of $p$. In comparison, Warnke's asymptotics~(\ref{eq:logP-order}) on the order of log-probability holds for all $p \ge (\log N/N)^{1/(k-1)}$.	
\end{remark}

In the above theorem, $\delta$ is allowed to decay as a function of $N$, and there is a qualitative change in the behavior of $\phi_p^{(k,\Omega)}(\delta)$ depending on how quickly $\delta$ decays compared to $p$. Drawing a parallel from statistical physics\footnote{The terms macroscopic/microscopic appear in many contexts in the statistical mechanics literature, and is generically used to described large/small scale behaviors, respectively. However, the exact definitions vary depending on the problem in question.}, we refer to the two regimes by:
\begin{itemize}
\item \emph{Macroscopic scale}: when $\delta$ is ``large'', namely when $\delta^{-3} p^{k-2}(\log(1/p))^2 \to 0$; and
\item \emph{Microscopic scale}: when $\delta$ is ``small'', namely when $\delta^{-3} p^{k-2}(\log(1/p))^2 \to \infty$.
\end{itemize}

\subsection{Macroscopic scale}
\label{sec:statements-macro}

In the macroscopic scale, $\delta^{-3} p^{k-2}(\log(1/p))^2\rightarrow 0$. Here, it might be helpful to think of $\delta$ as a constant or tending to zero ``slowly'' compared to $p$. We will establish, in Section~\ref{sec:macro}, the following asymptotic solution to the variational problem \eqref{eq:var} in this regime.

\begin{theorem} \label{thm:rate}
Fix $k\ge 3$. Let $\Omega = [N]$ or $\Z / N\Z$, and in the latter case assume that $N$ is prime. Let $p = p_N \to 0$ and $\delta = \delta_N > 0$ be such that $\delta = O(1)$, $\delta p^k N^2 \to \infty$, and $\delta^{-3} p^{k-2}(\log(1/p))^2 \to 0$. Then, as $N \to \infty$,
\[
\begin{split}
\phi_p^{(k,\Omega)}(\delta)
&=  (1+o(1)) \sqrt{(k-1)\delta p^k T_k(\Omega) }  \log(1/p)
\\
&= \begin{cases}
  (1+o(1))  \sqrt{\delta} p^{k/2} N \log(1/p)
  & \text{if } \Omega = [N], \\
  (1+o(1))  \sqrt{(k-1)\delta} p^{k/2} N \log(1/p)
  & \text{if } \Omega = \Z/N\Z \text{ with prime $N$}.
 \end{cases}
 \end{split}
\]
\end{theorem}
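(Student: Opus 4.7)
My plan for the upper bound on $\phi_p^{(k,\Omega)}(\delta)$ is to exhibit an explicit feasible $f$. Set $L := \sqrt{(k-1)\delta p^k T_k(\Omega)}$ and let $I \subset \Omega$ be an arithmetic progression of length $L$ (for instance, an interval when $\Omega = [N]$, or any arithmetic progression when $\Omega = \Z/N\Z$). Define $f := \mathbf{1}_I + p\mathbf{1}_{\Omega \setminus I}$. Grouping the sum $T_k(f)$ by the number $j$ of AP positions lying in $I$: the $j=0$ contribution is $p^k T_k(\Omega)$, the $j=k$ contribution is $T_k(I) \sim L^2/(k-1) = \delta p^k T_k(\Omega)$, and each intermediate $j$ contribution is $O(p^{k-j}\min(|I|^2, |I|N))$. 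Using $|I|=L$ and the macroscopic hypothesis $\delta^{-3} p^{k-2}(\log(1/p))^2 \to 0$, each intermediate term is of size $o(\delta p^k T_k(\Omega))$, so $f$ is feasible after a negligible adjustment of $|I|$. The entropy is exactly $\sum_a I_p(f(a)) = L\log(1/p)$, giving $\phi_p^{(k,\Omega)}(\delta) \le (1+o(1))\sqrt{(k-1)\delta p^k T_k(\Omega)}\log(1/p)$.

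\textbf{Lower bound via Theorem~\ref{thm:maxAP}.} Given any feasible $f$, I would first replace $f$ by $\max(f, p)$ (which preserves the AP constraint while only decreasing $\sum I_p(f)$), so WLOG $f \ge p$. Writing $f = p + g$ and expanding multilinearly,
\[
T_k(f) - p^k T_k(\Omega) = \sum_{\emptyset \neq S \subseteq [k]} p^{k-|S|} \sum_{a,b} \prod_{i \in S} g(a+ib).
\]
The fact that two positions determine a $k$-AP gives $\sum_{a,b}\prod_{i\in S}g(a+ib) \le \|g\|_1^2$ for $|S|\ge 2$ (using $g\le 1$) and $\le N\|g\|_1$ for $|S|=1$. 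Letting $A := \mathrm{supp}(g)$, so $\|g\|_1 \le |A|$, and applying Theorem~\ref{thm:maxAP} to the $|S|=k$ term, $T_k(g) \le T_k(A) \le (1+o(1))|A|^2/(k-1)$. Combining with the constraint $T_k(f) \ge (1+\delta)p^k T_k(\Omega)$ yields
\[
\delta p^k T_k(\Omega) \le kp^{k-1}N|A| + O(p|A|^2) + (1+o(1))\frac{|A|^2}{k-1}.
\]
The macroscopic condition is precisely what is needed to ensure the first two terms are $o(\delta p^k T_k(\Omega))$ when $|A| \asymp L$, forcing $|A| \ge (1-o(1))L$.

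\textbf{From $|A|$ to entropy, and the main obstacle.} It remains to show $\sum_a I_p(f(a)) \ge (1-o(1))L\log(1/p)$, which is the most subtle step: the naive pointwise bound $\sum I_p(f) \ge |A| \min_a I_p(f(a))$ is weak when $f$ is only slightly larger than $p$ on $A$. The plan is a dyadic ``concentration'' argument. Stratify via the level sets $A_j := \{f\ge 2^{-j}\}$ and identify a dominant level $j^\ast$ whose APs account for a significant portion of the excess. At such a level, the same reduction as above applied to $f$ restricted to $A_{j^\ast}$ gives $|A_{j^\ast}|^2/(k-1) \gtrsim 2^{j^\ast k}\, \delta p^k T_k(\Omega)$, hence $|A_{j^\ast}| \gtrsim L \cdot 2^{j^\ast k/2}$. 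The entropy contribution from this level is then at least $|A_{j^\ast}| I_p(2^{-j^\ast}) \gtrsim L \cdot 2^{j^\ast (k-2)/2}\log(1/p) \ge L\log(1/p)$ for $k\ge 3$, with equality (up to lower order) at $j^\ast=0$, corresponding to the interval construction. The principal technical obstacle is rigorously identifying such a dominant level and managing cross-level interactions when $f$ has multiple active scales, both of which are again handled using the macroscopic hypothesis.
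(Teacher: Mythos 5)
Your upper bound is essentially the paper's: take an interval of length $L=\sqrt{(k-1)\delta p^k T_k(\Omega)}$, set $f = \mathbf 1_I + p\mathbf 1_{\Omega\setminus I}$, expand multilinearly and kill the cross terms with the macroscopic hypothesis. That part is fine.

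The lower bound has a genuine gap, and you correctly locate it but the proposed fix does not close it. Setting $A := \mathrm{supp}(g)$ is not useful: $g=f-p$ can be strictly positive on all of $\Omega$, so $|A|$ may be as large as $N$, in which case the inequality $\delta p^k T_k(\Omega)\le kp^{k-1}N|A|+O(p|A|^2)+(1+o(1))|A|^2/(k-1)$ is trivially satisfied and carries no information; and, more to the point, a lower bound on $|\mathrm{supp}(g)|$ never controls $\sum_a I_p(f(a))$ because $I_p(p+g(a))$ is tiny where $g(a)$ is tiny. The dyadic stratification you sketch then faces two concrete obstacles. First, a ``dominant level'' need not exist: the excess $k$-AP count can be spread over $\Theta(\log(1/p))$ scales, and pigeonholing on the dominant level loses a $\log$ factor that destroys the tight constant. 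Second, cross-level terms such as $\sum_{a,b}\prod_i g(a+ib)$ with the factors at different dyadic scales are not controlled by per-level $T_k(A_{j})$ bounds. Because of both issues your argument, even if patched, seems to give only $\phi_p^{(k,\Omega)}(\delta)\gtrsim L\log(1/p)$, whereas the theorem asserts the sharp constant $1+o(1)$.

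The paper avoids all of this by a cleaner two-stage thresholding. One first shows (Lemma~\ref{lem:threshold}) that with $\tau=p^{3/4}$ the entire contribution of $g_{\le\tau}$ to $T_k$ is $o(\delta p^k T_k(\Omega))$, then bootstraps (Lemma~\ref{lem:bootstrap}) to a threshold $\tau=p^{o(1)}$; at that point $I_p(p+g_{>\tau}(a))\sim g_{>\tau}(a)\log(1/p)$ uniformly, which is exactly what makes the constant come out right. Finally, to convert the weighted bound $T_k(g_{>\tau})\ge(1-o(1))\delta p^k T_k(\Omega)$ into a set-size bound one needs the weighted-to-unweighted comparison (Lemma~\ref{lem:weighted-AP-count}, proved by a Chernoff/random-rounding argument), a step missing from your outline. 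If you want to salvage your route, replacing $\mathrm{supp}(g)$ by a single threshold set $\{g>\tau\}$ with $\tau=p^{o(1)}$, proving the negligibility of the small part, and then invoking a version of Lemma~\ref{lem:weighted-AP-count} would essentially reproduce the paper's proof.
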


\begin{remark}
In the case of constant $\delta$, say, while the above solution to the variation problem needs only $p^k N^2 \to \infty$, a stronger condition $p^{k-1} N \to \infty$ (implying $\EE T_k(\Omega_p) \sim p^k T_k(\Omega)$) is necessary even just for the concentration of the random variable $T_k(\Omega_p)$.
\end{remark}

We prove Theorem~\ref{thm:rate} by first reducing the variational problem to an extremal problem in additive combinatorics, namely that of determining the size of the smallest subset of $\Omega$ with a given number of $k$-APs, or equivalently, the maximum number of $k$-APs in a subset of $\Omega$ of a given size.

\begin{proposition} \label{ppn:rate-extremal}
Under the same hypotheses as Theorem~\ref{thm:rate} (except that $N$ is not required to be prime in the case $\Omega = \Z/N\Z$), as $N \to \infty$,
\begin{align}\label{eq:rateext}
\phi_p^{(k,\Omega)}(\delta)
=  (1+o(1)) \log(1/p) \cdot \min_{S \subset \Omega} \Big\{ |S|: T_k(S) \ge \delta p^k T_k(\Omega)\Big\}.
\end{align}
\end{proposition}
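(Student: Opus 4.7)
The plan is to establish matching asymptotic upper and lower bounds on $\phi_p^{(k,\Omega)}(\delta)$. Write $E(\delta) := \min\{|S| : T_k(S) \ge \delta p^k T_k(\Omega)\}$ for the extremal quantity on the right-hand side. The extremal theorem for $k$-APs (Theorem~\ref{thm:maxAP}, extending Green--Sisask) yields $E(\delta) \sim c_k \sqrt{\delta p^k T_k(\Omega)}$ for an explicit constant $c_k$, from which the key regularity $E((1+o(1))\delta) = (1+o(1)) E(\delta)$ follows. For the upper bound, pick a set $S \subset \Omega$ nearly achieving $E(\delta')$ for $\delta' := \delta/(1-p)^k = \delta(1+O(p))$, so $|S| = (1+o(1)) E(\delta)$. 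Take the trial function $f := p \bm{1}_\Omega + (1-p)\bm{1}_S$, i.e., $f = 1$ on $S$ and $f = p$ elsewhere. A multilinear expansion of $T_k(f)$ in the two indicators gives
$$T_k(f) \ge p^k T_k(\Omega) + (1-p)^k T_k(S) \ge (1+\delta) p^k T_k(\Omega),$$
so $f$ is feasible, with entropy cost exactly $|S| \log(1/p) \le (1+o(1)) E(\delta) \log(1/p)$.

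For the lower bound, given any feasible $f: \Omega \to [0,1]$, I would fix a threshold $t = t_N \to 1$ slowly enough that $I_p(t) = (1-o(1)) \log(1/p)$, and set $S := \{a : f(a) \ge t\}$. Since $I_p$ is increasing on $[p,1]$,
$$\sum_a I_p(f(a)) \ge |S|\, I_p(t) \ge (1-o(1))\, |S| \log(1/p),$$
so it suffices to show $|S| \ge (1-o(1)) E(\delta)$. By the definition of $E$ and the regularity above, this in turn reduces to the key claim that $T_k(S) \ge (1-o(1))\, \delta p^k T_k(\Omega)$.

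To prove this key claim, partition $\Omega = S \sqcup M \sqcup L$, where $L := \{a : f(a) \le u\}$ for a second threshold $u = u_N$ with $u/p \to 1$, and $M$ is the intermediate range. Writing $f = f_S + f_M + f_L$ and multilinearly expanding $T_k(f)$ into $3^k$ terms, the ``extracted'' term $T_k(f_S) \le T_k(\bm{1}_S) = T_k(S)$ and the ``background'' term $T_k(f_L) \le u^k T_k(\Omega) = (1+o(1)) p^k T_k(\Omega)$ are the two main pieces; the goal is to show that each of the $3^k - 2$ remaining cross terms is $o(\delta p^k T_k(\Omega))$, which then yields $T_k(S) \ge T_k(f) - (1+o(1)) p^k T_k(\Omega) - o(\delta p^k T_k(\Omega)) \ge (1-o(1))\, \delta p^k T_k(\Omega)$.

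The main obstacle is bounding these mixed cross terms, and this is where the macroscopic hypothesis $\delta^{-3} p^{k-2} (\log(1/p))^2 \to 0$ enters. The plan is to combine the entropy budget $\sum I_p(f) \lesssim \sqrt{\delta p^k}\, N \log(1/p)$ with the pointwise estimates $I_p(x) \gtrsim (x-p)^2/p$ for $x$ near $p$ (controlling $|M|$ and $\sum_{a \in M} f(a)$) and $I_p(x) \gtrsim x \log(1/p)$ for $x$ bounded away from $p$ (controlling $|S|$ a priori). These pieces feed into H\"older-type inequalities for $k$-AP sums that exploit $\|f_M\|_\infty < t$, $\|f_L\|_\infty \le u$, and the support-size bounds, making each cross term genuinely $o(\delta p^k T_k(\Omega))$ provided $t_N$ and $u_N$ are chosen appropriately; the macroscopic hypothesis furnishes precisely the slack needed for such a choice to exist.
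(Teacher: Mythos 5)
Your upper bound is correct and essentially the same as the paper's, up to the cosmetic difference that the paper derives the slightly better inequality $T_k(p + (1-p)\bm 1_S) \ge p^kT_k(\Omega) + (1-p^k)T_k(S)$, but $(1-p)^k T_k(S)$ together with $\delta' = \delta/(1-p)^k$ serves just as well.

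The lower bound, however, has a genuine gap, and it lies in the choice of threshold. Taking $S = \{a : f(a) \ge t\}$ with $t\to 1$ and trying to prove $T_k(S) \ge (1-o(1))\delta p^k T_k(\Omega)$ cannot succeed, because a feasible $f$ need never take values near $1$. Take $f \equiv 1/2$ on an interval $J$ of size about $2^{k/2}\sqrt{(k-1)\delta p^k T_k(\Omega)}$ and $f \equiv p$ elsewhere: expanding $T_k(p + \tfrac12\bm 1_J)$ multilinearly gives $T_k(f) \ge p^k T_k(\Omega) + 2^{-k}T_k(J) \ge (1+\delta)p^k T_k(\Omega)$, so $f$ is feasible, yet your $S$ is empty, $T_k(S) = 0$, and the entire $k$-AP excess lives in your middle range $M$ — so the single cross term $T_k(f_M) \asymp \delta p^k T_k(\Omega)$ is emphatically not $o(\delta p^k T_k(\Omega))$, and no choice of $t\to 1$, $u/p\to 1$, or use of the macroscopic hypothesis fixes this. (This $f$ is not a near-minimizer — its entropy is about $2^{k/2-1}$ times $E(\delta)\log(1/p)$ — so the proposition is safe; it is the argument that fails.) The underlying point you miss is that for small $p$ one has $I_p(p+c)\sim c\log(c/p)\sim c\log(1/p)$ already for all $c$ as small as $p^{o(1)}$: the entropy cost is proportional to the \emph{value} $c$ across the whole range $[p^{o(1)},1]$, not concentrated near $c=1$. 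The correct move, and what the paper does, is therefore to threshold at a level $\tau\to 0$ rather than $t\to 1$, and then \emph{not} reduce to an indicator. Write $f = p+g$ and decompose $g = g_{\le\tau}+g_{>\tau}$ with $\tau = p^{3/4}$ (later bootstrapped to $\tau = p^{o(1)}$); the macroscopic hypothesis controls the cross terms in this two-part expansion (your $3^k$-term cross-term plan is in the right spirit, just applied to the wrong decomposition), giving $T_k(g_{>\tau}) \ge (1-o(1))\delta p^k T_k(\Omega)$. The missing ingredient you would then need is a \emph{weighted} extremal lemma — Lemma~\ref{lem:weighted-AP-count} in the paper, proved by a Chernoff/sampling argument — asserting that any $[0,1]$-valued $g$ with $\sum_a g(a) = m$ has $T_k(g) \le (1+o(1))\max_{|A|\le m}T_k(A)$. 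Applied in the contrapositive, this yields $\sum_a g_{>\tau}(a) \ge (1-o(1))\min\{|S|: T_k(S)\ge\delta p^k T_k(\Omega)\}$, and then $\sum_a I_p(f(a)) \ge \sum_a I_p(p+g_{>\tau}(a)) \sim \log(1/p)\sum_a g_{>\tau}(a)$ finishes the lower bound, with the comparison made at the level of the weighted sum $\sum_a g_{>\tau}(a)$ rather than the cardinality of any superlevel set.
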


The number of $k$-APs in a set of given (sufficiently small) size is always maximized by an interval, as stated precisely below. The theorem below was proved for 3-APs by Green and Sisask~\cite{GS08} and extended to all $k$-APs in Section~\ref{sec:kAPmax}.

\begin{theorem}\label{thm:maxAP}
Fix a positive integer $k \geq 3$. There exists some constant $c_k > 0$ such that the following statement holds. Let $A \subset \Z$ be a subset with $|A| = n$, or $A \subset \Z/N\Z$ with $N$ prime and $|A| = n \leq c_kN$. Then $T_k(A) \leq T_k([n])$.
\end{theorem}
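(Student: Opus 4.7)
The plan is to treat the two cases --- $\Omega = \mathbb{Z}$ and $\Omega = \mathbb{Z}/N\mathbb{Z}$ --- with related but distinct tools, using the integer case as the foundation for the cyclic one.

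For $\Omega = \mathbb{Z}$, I would prove the statement via the discrete analog of the Brascamp--Lieb--Luttinger (Riesz) rearrangement inequality. Note that $T_k(A) = \sum_{(a,d) \in \mathbb{Z}^2} \prod_{j=0}^{k-1} \mathbf{1}_A(L_j(a,d))$ is a multilinear sum of $\mathbf{1}_A$ evaluated along the $k$ linear forms $L_j(a,d) = a + jd$ on $\mathbb{Z}^2$ --- exactly the setting of BLL. A convenient route is to embed $\mathbf{1}_A$ as a step function on $\mathbb{R}$, apply the continuous BLL inequality, and recover the discrete bound by a straightforward approximation. The conclusion is $T_k(A) \le T_k(A^*)$, where $A^* = \{-\lfloor n/2 \rfloor, \dots, \lceil n/2 \rceil - 1\}$ is the symmetric decreasing rearrangement of $A$. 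Translation invariance of $T_k$ on $\mathbb{Z}$ then gives $T_k(A^*) = T_k([n])$.

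For the cyclic case, the plan is to reduce to the integer case by lifting. Given $A \subset \mathbb{Z}/N\mathbb{Z}$ with $|A| = n \le c_k N$, I would find a rotation $A + t$, and possibly a dilation $\lambda \cdot (A + t)$ by some $\lambda \in (\mathbb{Z}/N\mathbb{Z})^*$ (available because $N$ is prime, and preserving $T_k$), so that the canonical lift $\widetilde A \subset \{0, \dots, N-1\} \subset \mathbb{Z}$ has the property that every cyclic $k$-AP in the rotated/dilated $A$ corresponds to an honest integer $k$-AP in $\widetilde A$, i.e., no wrap-around across the chosen cut. Concretely, it suffices to produce an empty arc in the rotated/dilated set long enough that no $k$-AP of $A$ can straddle it. Then $T_k(A)_{\mathbb{Z}/N\mathbb{Z}} = T_k(\widetilde A)_{\mathbb{Z}}$, and the $\mathbb{Z}$ case applied to $\widetilde A$ yields $T_k(A) \le T_k([n])$.

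The main obstacle will be the cyclic step: $\mathbb{Z}/N\mathbb{Z}$ admits no symmetric decreasing rearrangement, and a naive shift cannot in general produce an empty arc of linear size --- for $|A| \asymp N$ even the maximal gap could be $O(1)$. The hypothesis $n \le c_k N$ with $c_k$ sufficiently small is therefore essential, and the argument likely exploits the dilation action on $\mathbb{Z}/N\mathbb{Z}$ (using primality of $N$) together with a counting/averaging step to find a dilate $\lambda \cdot A$ with a sufficiently long empty arc, in the spirit of the Fourier/Bohr-set machinery appearing later in Green--Sisask's treatment of the $k=3$ case. Once a good rotation/dilation is in place, the rest is a reduction to the integer case handled above.
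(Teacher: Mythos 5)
Your plan has a genuine gap in each of the two cases.

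\textbf{The integer case.} The deduction from continuous BLL to the discrete statement is not a ``straightforward approximation''; it loses a constant factor. Take $f_\e = \sum_{m\in A}\mathbf 1_{[m,m+\e)}$ and send $\e\to 0$. Then $\int_{\RR^2}\prod_{j=0}^{k-1} f_\e(a+jb)\,da\,db = \alpha_k \e^2\, T_k(A)$, where $\alpha_k$ is the area of $\{(u,v):u+jv\in[0,1),\ 0\le j\le k-1\}$. Meanwhile $f_\e^*=\mathbf 1_{[-n\e/2,n\e/2)}$ is a \emph{single} interval of length $n\e$, and by scaling $\int\prod_j f_\e^*(a+jb) = C_k\,(n\e)^2$ with $C_k = \int\prod_j\mathbf 1_{[-1/2,1/2)}(a+jb)\,da\,db$. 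So BLL gives $T_k(A)\le (C_k/\alpha_k)\,n^2$. A direct computation for $k=3$ gives $\alpha_3=C_3=1/2$, hence $T_3(A)\le n^2$, which is off by a factor of two from $T_3([n])=n^2/2+O(1)$. The rearrangement collapses the unit gaps between the $\e$-pieces of $f_\e$ into a solid interval, which genuinely has a larger continuous $T_k$-integral than the $\e$-thickened $[n]$; this loss is intrinsic, and Green and Sisask flag exactly this obstruction. The paper instead proves the $\ZZ$ case by an elementary induction on $k$: order $A=\{a_0<\cdots<a_{n-1}\}$, pick $m\approx \frac{k-2}{k-1}n$, split increasing $k$-APs by whether their second-largest term is $\ge a_m$, bound the first type by $\sum_{m\le i<n}(n-1-i)$, bound the second type by $T'_{k-1}(\{a_0,\dots,a_{m-1}\})\le T'_{k-1}([m])$ via the inductive hypothesis, and then verify these two bounds exactly account for $T'_k([n])$.

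\textbf{The cyclic case.} Finding a dilate $\lambda\cdot A$ with an empty arc of length $\Omega(N)$ is impossible for general $A$ with $|A|=n\asymp c_kN$. If $A$ is a random subset of this density, then for \emph{every} $\lambda\in(\ZZ/N\ZZ)^*$ the set $\lambda\cdot A$ has maximal gap only $O_c(\log N)$; no amount of averaging over dilations produces a linear gap. The rectification result you are invoking (Green--Sisask, Theorem 4.1) applies only under a small-doubling hypothesis $|A+A|\le K|A|$, which is precisely what a random $A$ fails. The missing idea is a dichotomy: either $A$ is well-approximated by a small-doubling set (and rectifies), or $T_k(A)$ is automatically far below $T_k([n])$, in which case there is nothing to prove. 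The paper implements this via the Green--Sisask structural decomposition (their Prop.~3.2): write $A=A_0\cup A_1\cup\cdots\cup A_m$ with each $A_i$ ($i\ge 1$) of small doubling, cross-pair additive energies small, and $A_0$ a small error; then (i) show cross terms $T_k(A_{i_1},\dots,A_{i_k})$ are negligible using an energy$\to T_3$$\to T_k$ bound, (ii) rectify and bound each diagonal term $T_k(A_i)$, and (iii) deduce from near-maximality of $T_k(A)$ that some $A_i$ captures $(1-o(1))$-fraction of $A$, at which point a single dilation plus the $\ZZ$ case finishes. Your proposal tries to rectify $A$ in one shot and has no fallback for the unrectifiable case, so it cannot close.
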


After some algebra one easily obtains
\[ T_k([n]) = \frac{n^2}{k-1} - \frac{r^2}{k-1} + r, \]
where $r \in \{1,2,\dots,k-1\}$ is chosen such that $n \equiv r\pmod{k-1}$. In particular we have
\begin{equation}\label{eq:Tk(n)}
\frac{n^2}{k-1} \leq T_k([n]) \leq \frac{n^2}{k-1} + \frac{1}{4}(k-1).
\end{equation}
From this formula we can easily deduce Theorem~\ref{thm:rate} from Proposition~\ref{ppn:rate-extremal}.

Combining with the large deviation principle Theorem~\ref{thm:ldpkap}, we obtain the following corollary on the large deviation rate for upper tails of $k$-AP counts.

\begin{corollary}\label{cor:upper-tail-rate}
Fix $k \ge 3$. Let $\Omega = [N]$ or $\Z/ N \Z$, and in the latter case assume that $N$ is prime. 
Let $p = p_N \to 0$ and $\delta = \delta_N > 0$ be such that $\delta = O(1)$, $\delta^{-3} p^{k-2}(\log(1/p))^2 \to 0$, and
\begin{align}\label{eq:hyp-p-lowerII}
\delta p^k \ge N^{-\frac{1}{6(k-1)}} \log N
\end{align}
(for $k=3$, the right-hand side can be relaxed to $N^{-1/6}(\log N)^{7/6}$; for $k=4$, it can be relaxed to $N^{-1/12} (\log N)^{13/12}$). Then, as $N \to \infty$, the random variable $X_k = T_k(\Omega_p)$ satisfies
\[
\frac{- \log \PP\left(X_k \ge (1+\delta)\EE X_k\right)}{\sqrt{\delta} p^{k/2} N \log (1/p)}
= \begin{cases}
  1+o(1)
  & \text{if } \Omega = [N], \\
  \sqrt{k-1} + o(1)
  & \text{if } \Omega = \Z/N\Z \text{ with prime $N$}.
 \end{cases}
\]
\end{corollary}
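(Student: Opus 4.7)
The plan is to chain together the large deviation principle of Theorem~\ref{thm:ldpkap} with the asymptotic evaluation of the rate function in Theorem~\ref{thm:rate}, and then check that the $o(\delta)$ perturbation that Theorem~\ref{thm:ldpkap} introduces inside the argument of $\phi_p^{(k,\Omega)}$ is absorbed by the square-root behavior of the rate function.

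First I would verify that the hypotheses of Theorem~\ref{thm:ldpkap} apply. Hypothesis \eqref{eq:hyp-p-lowerII} directly supplies the bound $\delta p^k \ge N^{-1/(6(k-1))}\log N$ (with the sharper versions for $k=3,4$), but \eqref{eq:hyp-p-lower} also requires $\delta^2 p \ge N^{-1/(6(k-1))}\log N$. I would derive this from the macroscopic condition $\delta^{-3}p^{k-2}(\log(1/p))^2 \to 0$, which gives $\delta \gg p^{(k-2)/3}(\log(1/p))^{2/3}$ and hence $\delta^2 p \gg p^{(2k-1)/3}(\log(1/p))^{4/3}$. Combining \eqref{eq:hyp-p-lowerII} with $\delta = O(1)$ forces $p \gtrsim N^{-1/(6k(k-1))}(\log N)^{1/k}$, so $p^{(2k-1)/3} \gtrsim N^{-(2k-1)/(18k(k-1))}(\log N)^{(2k-1)/(3k)}$. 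Since $2k-1 < 3k$, the exponent of $N$ here strictly exceeds $-1/(6(k-1)) = -3k/(18k(k-1))$, and the $N$-gain easily swallows any loss in the logarithmic factor. The analogous calculation works with the improved exponents for $k=3,4$. Therefore Theorem~\ref{thm:ldpkap} yields
\[
-\log \PP(X_k \ge (1+\delta)\EE X_k) = (1+o(1))\phi_p^{(k,\Omega)}(\delta + o(\delta)).
\]

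Next I would apply Theorem~\ref{thm:rate} to $\delta' := \delta + o(\delta)$. Its hypotheses transfer from $\delta$ to $\delta'$ immediately: $\delta' = O(1)$ since $\delta = O(1)$; $\delta' p^k N^2 \to \infty$ because $\delta' p^k \sim \delta p^k$ and already $\delta p^k \cdot N^2 \ge N^{2 - 1/(6(k-1))}\log N$; and $(\delta')^{-3} p^{k-2}(\log(1/p))^2 \sim \delta^{-3} p^{k-2}(\log(1/p))^2 \to 0$. Applying Theorem~\ref{thm:rate} then gives
\[
\phi_p^{(k,\Omega)}(\delta') = (1+o(1))\, C_\Omega \sqrt{\delta'}\, p^{k/2} N \log(1/p),
\]
where $C_\Omega = 1$ when $\Omega = [N]$ and $C_\Omega = \sqrt{k-1}$ when $\Omega = \Z/N\Z$ with $N$ prime.

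Finally, since $\sqrt{\delta + o(\delta)} = (1+o(1))\sqrt{\delta}$, substituting back yields
\[
-\log \PP(X_k \ge (1+\delta)\EE X_k) = (1+o(1))\, C_\Omega \sqrt{\delta}\, p^{k/2} N \log(1/p),
\]
and dividing by $\sqrt{\delta}\, p^{k/2} N \log(1/p)$ proves the corollary. The only real work is the hypothesis-check in the first paragraph: the power-comparison showing that the macroscopic regime together with the lower bound on $\delta p^k$ forces $\delta^2 p$ to also lie above the threshold of Theorem~\ref{thm:ldpkap}. Everything else is a clean substitution, since the variational asymptotic scales like $\sqrt{\delta}$ and is therefore manifestly stable under $o(\delta)$ perturbations of its argument.
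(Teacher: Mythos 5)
Your proposal is correct and follows the paper's (implicit) route of chaining Theorem~\ref{thm:ldpkap} with Theorem~\ref{thm:rate}; the paper leaves the hypothesis check unspoken, and your power-comparison showing that \eqref{eq:hyp-p-lowerII} plus the macroscopic condition forces $\delta^2 p$ above the threshold in \eqref{eq:hyp-p-lower} is valid. A slightly more direct way to see this is that the macroscopic condition gives $\delta \gtrsim p^{(k-2)/3} \geq p^{k-1}$ (since $(k-2)/3 \le k-1$ and $p<1$), whence $\delta^2 p \geq \delta p^k$ and the $\min$ in \eqref{eq:hyp-p-lower} is simply $\delta p^k$, so \eqref{eq:hyp-p-lowerII} alone suffices.
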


\begin{remark} When $\Omega=\Z/N\Z$, it is standard to assume $N \to \infty$ along the primes to avoid torsion issues. Without the primality assumption, the leading constant in the large deviations rate function could depend on the subsequence along which $N$ goes to infinity (a similar issue was discussed in \cite{Croot08}). For example, when $N= N_1N_2$, the maximum number of $k$-APs in $\Z/N\Z$ in a set of size $N_1$ is given by $A = N_2\Z/N\Z$, which has $T_k(A) = N_1^2$, more than $T_k([N_1]) = N_1^2/(k-1) + O(1)$. Thus, as a consequence of Proposition~\ref{ppn:rate-extremal}, we could have a different constant in Corollary~\ref{cor:upper-tail-rate} for $\Omega = \Z/N\Z$ if $N\to\infty$ along some sequence other than the primes.
\end{remark}

\subsection{Microscopic scale}
\label{sec:statements-micro}

In the microscopic scale, $\delta^{-3} p^{k-2}(\log(1/p))^2 \to \infty$. Here $\delta = \delta_N$ is thought as tending to zero relatively quickly compared to $p = p_N$.
We will establish, in Section~\ref{sec:micro}, the following asymptotic solution to the variational problem \eqref{eq:var} in this regime.

\begin{theorem} \label{thm:micro-rate}
Fix $k\ge 3$. Let $\Omega = \Z / N\Z$ or $[N]$. Let $p = p_N \in (0,1)$ and $\delta = \delta_N > 0$ be such that $p \to 0$ and $\delta^{-3} p^{k-2}(\log(1/p))^2 \to \infty$. Then, as $N \to \infty$,
\[
\phi_p^{(k, \Omega)}(\delta) =
\begin{cases}
\left(\cfrac{1}{2k^2}+o(1)\right) \delta^2 Np & \text{if } \Omega = \Z/N\Z, \\
\left(\cfrac{1}{2\gamma_k}+o(1)\right) \delta^2 Np & \text{if } \Omega = [N],
\end{cases}
\]
where
\begin{equation}
	\label{eq:gamma}
\gamma_k
= \frac{4}{3}\left(k + \sum_{0\le i < j < k}
\frac{(k-1)^2-i^2-(k-1-j)^2}{(k-1-i)j}\right).
\end{equation}
\end{theorem}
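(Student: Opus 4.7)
The plan is to linearize the variational problem $\phi_p^{(k,\Omega)}(\delta)$ around the constant function $f \equiv p$. Writing $g := f - p$, Taylor expansion gives $I_p(p+g) = g^2/(2p(1-p)) + O(|g|^3/p^2)$ for $|g| \ll p$, while expanding the product in $T_k(f)$ produces
\[
T_k(f) - p^k T_k(\Omega) = p^{k-1}\sum_{c\in\Omega} h(c)\, g(c) + \sum_{\substack{S\subseteq\{0,\dots,k-1\} \\ |S|\ge 2}} p^{k-|S|}\, R_S(g),
\]
where $h(c) := \sum_{i=0}^{k-1} N_{c,i}$ and $N_{c,i}$ counts the number of $k$-APs in $\Omega$ with $c$ in position $i$, while $R_S(g) := \sum_{(a,b)}\prod_{i\in S} g(a+ib)$ collects higher-order multilinear terms. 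The linearized problem of minimizing $\sum_c g(c)^2/(2p)$ subject to $\sum_c h(c)g(c) \ge \delta p\, T_k(\Omega)$ has Lagrange-optimal solution $g = \lambda h$ with $\lambda = \delta p\, T_k(\Omega)/\sum_c h(c)^2$, giving
\[
\phi_p^{(k,\Omega)}(\delta) \approx \frac{(\delta p\, T_k(\Omega))^2}{2p\sum_c h(c)^2}.
\]

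The next step is to evaluate $\sum_c h(c)^2$ in both settings and verify the closed form. For $\Omega = \Z/N\Z$, translation invariance forces $N_{c,i} = N$, so $h \equiv kN$, $T_k(\Omega) = N^2$, and $\sum_c h(c)^2 = k^2 N^3$, producing $\delta^2 Np/(2k^2)$. For $\Omega = [N]$, I compute $N_{c,i}$ directly by counting valid slopes $b\in\Z$: the condition $a + jb \in [N]$ for all $j$ with $a = c - ib$ reduces to $c - ib \ge 1$ and $c + (k-1-i)b \le N$, yielding $N_{c,i}$ that scales like $N\cdot \min(c/N,\,(N-c)/N)$ weighted by $1/i$ and $1/(k-1-i)$. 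A Riemann-sum evaluation $\sum_c h(c)^2 \sim N\int_0^1 h(tN)^2\,dt$, combined with $T_k([N]) \sim N^2/(k-1)$ from~\eqref{eq:Tk(n)}, produces the constant $\gamma_k$ given in~\eqref{eq:gamma}; the cross terms $\sum_c N_{c,i}N_{c,j}$ for $i<j$ are where the numerator $(k-1)^2 - i^2 - (k-1-j)^2$ arises, as an integrated measure of the region in $t\in[0,1]$ where both position-$i$ and position-$j$ constraints are simultaneously active.

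For the \emph{upper bound} on $\phi_p^{(k,\Omega)}(\delta)$ I take the test function $f = p + \lambda h$. Since $h \ge 0$ all remainders $R_S(\lambda h)$ are nonnegative, so the constraint is automatically satisfied once the linear term matches. Moreover $\|\lambda h\|_\infty \asymp \delta p \ll p$ in the microscopic regime, so Taylor expansion gives $\sum_c I_p(p + \lambda h(c)) = (1+o(1))\sum_c(\lambda h(c))^2/(2p)$, matching the claimed value.

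The \emph{lower bound} is the main obstacle. Given any feasible $f$ with $B := \sum_c I_p(f(c))$ of the target order $\delta^2 Np$, my strategy is a truncation: split $g = g_{\mathrm{sm}} + g_{\mathrm{lg}}$ around a threshold $\tau = \tau(p,\delta)$, using the quadratic bound $I_p(p+g) \gtrsim g^2/p$ on $\{|g|\le\tau\}$ and the superlinear bound $I_p(p+g) \gtrsim |g|\log(|g|/p)$ on the complement to obtain control on $\|g_{\mathrm{sm}}\|_2^2$ and $\|g_{\mathrm{lg}}\|_1$ in terms of $B$. The crux is showing $|p^{k-|S|}R_S(g)| = o(\delta p^k T_k(\Omega))$ for every $|S|\ge 2$: Hölder's inequality bounds $|R_S(g)|$ by a constant times $N\|g\|_{|S|}^{|S|}$, and I use $\|g_{\mathrm{sm}}\|_m^m \le \tau^{m-2}\|g_{\mathrm{sm}}\|_2^2$ on the bulk together with $\|g_{\mathrm{lg}}\|_m^m \le \|g_{\mathrm{lg}}\|_1$ on the tail (since $|g|\le 1$). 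The microscopic hypothesis $\delta^{-3}p^{k-2}(\log(1/p))^2\to\infty$ enters at exactly this step: it is the precise threshold at which an appropriate choice of $\tau$ makes all nonlinear contributions---as well as the cubic Taylor corrections in $I_p$---simultaneously $o(\delta^2 Np)$. Once the constraint reduces to $\sum_c h(c)g(c) \ge \delta p\, T_k(\Omega)(1-o(1))$, Cauchy-Schwarz gives $\sum_c g(c)^2 \ge (\delta p\, T_k(\Omega))^2/\sum_c h(c)^2\cdot(1-o(1))$, and the quadratic Taylor lower bound for $I_p$ applied to $g_{\mathrm{sm}}$ yields the matching lower bound. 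The main difficulty is the quantitative bookkeeping in this truncation argument, where the three error sources must be balanced simultaneously against the hypothesis on $\delta$ and $p$.
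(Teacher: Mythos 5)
Your overall architecture matches the paper's: the Lagrange-optimal perturbation $g=\lambda h$ (your $h$ is the paper's $\nu$) for the upper bound, a truncation $g=g_{\mathrm{sm}}+g_{\mathrm{lg}}$ plus Cauchy--Schwarz for the lower bound, and a Riemann sum to evaluate $\sum_c h(c)^2$ and produce $\gamma_k$. However, the central estimate you propose for the nonlinear terms is too weak, and this is where the microscopic hypothesis actually has to be used with full strength.

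Concretely, for the ``all-large'' contribution to $R_S(g)$ with $|S|\ge 2$, you bound $|R_S(g_{\mathrm{lg}})|\lesssim N\|g_{\mathrm{lg}}\|_{|S|}^{|S|}\le N\|g_{\mathrm{lg}}\|_1$. Since the entropy budget gives $\|g_{\mathrm{lg}}\|_1\lesssim \delta^2Np/\log(1/p)$, the resulting contribution (taking $p^{k-|S|}\le 1$) is of order $\delta^2 N^2 p/\log(1/p)$, and requiring this to be $o(\delta p^k N^2)$ forces $\delta=o(p^{k-1}\log(1/p))$. But the microscopic hypothesis only gives $\delta=o\bigl(p^{(k-2)/3}(\log(1/p))^{2/3}\bigr)$, which is far weaker; for instance at $k=3$, $\delta$ can be of order $p^{1/3}$ while you would need $\delta=o(p^2\log(1/p))$. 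The paper avoids this loss by \emph{not} using a symmetric $\ell^{|S|}$ H\"older: instead, when at least two positions carry $g_{\mathrm{lg}}$, it bounds the remaining $|S|-2$ factors by $1$ and applies the $\ell^1\times\ell^1$ inequality of Lemma~\ref{lem:double-sum}, $\sum_{a,b}g(a+xb)g(a+yb)\le (k-1)\bigl(\sum_a g(a)\bigr)^2$, giving $(\|g_{\mathrm{lg}}\|_1)^2$. This is smaller than your $N\|g_{\mathrm{lg}}\|_1$ by a factor $\|g_{\mathrm{lg}}\|_1/N\approx\delta^2p/\log(1/p)$, and the resulting condition $\bigl(\delta^2 Np/\log(1/p)\bigr)^2=o(\delta p^k N^2)$ is precisely $\delta^3=o(p^{k-2}(\log(1/p))^2)$, matching the theorem's hypothesis exactly. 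You will need to replace the all-$\ell^{|S|}$ H\"older step with this single-out-two-positions $\ell^1$-type bound. A second, smaller issue: for $\Omega=[N]$ your Cauchy--Schwarz finish requires the quadratic approximation $I_p(p+x)\sim x^2/(2p)$, valid only when $x=o(p)$; with a truncation threshold such as $\tau=p^{3/4}$ the bulk $g_{\mathrm{sm}}$ is not yet in this range, and the paper inserts a further truncation at $p^{1.1}$ (with the linear estimate $I_p(p+x)\gtrsim p^{0.3}x$) to show the intermediate range contributes negligibly to $\sum h(c)g(c)$; your proposal should account for this as well.
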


\begin{remark}
The first few values of $\gamma_k$ are $\gamma_3= 28/3$, $\gamma_4 = 17$, and $\gamma_5 = 718/27$. We are not aware of a closed-form expression for $\gamma_k$. However, one always has $\gamma_k \ge k^2$, and asymptotically $\lim_{k\to \infty} \gamma_k/k^2 = (30 - 2\pi^2)/9 \approx 1.14$.
\end{remark}

Combining with the large deviation principle Theorem~\ref{thm:ldpkap}, we obtain the following corollary.

\begin{corollary} \label{cor:micro-prob}
Fix $k \ge 3$. Let $\Omega = [N]$ or $\Z/ N \Z$, and in the latter case assume that $N$ is prime. 
Let $p = p_N \to 0$ and $\delta = \delta_N > 0$ be such that $\delta = O(1)$, $\delta^{-3} p^{k-2}(\log(1/p))^2 \to \infty$, and
\[
\min\{\delta p^k,\delta^2 p\} \ge N^{-\frac{1}{6(k-1)}} \log N
\]
(for $k=3$, the right-hand side can be relaxed to $N^{-1/6}(\log N)^{7/6}$; for $k=4$, it can be relaxed to $N^{-1/12} (\log N)^{13/12}$).
Then, as $N \to \infty$, the random variable $X_k = T_k(\Omega_p)$ satisfies
\[
\frac{- \log \PP\left(X_k \ge (1+\delta)\EE X_k\right)}{\delta^2 N p}
= \begin{cases}
  1/(2k^2) +o(1)
  & \text{if } \Omega = \Z/N\Z, \\
  1/(2\gamma_k) + o(1)
  & \text{if } \Omega = [N],
 \end{cases}
\]
where $\gamma_k$ is defined in \eqref{eq:gamma}.
\end{corollary}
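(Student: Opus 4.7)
The proof is a direct combination of the large deviation principle in Theorem~\ref{thm:ldpkap} with the asymptotic solution of the variational problem in the microscopic regime given by Theorem~\ref{thm:micro-rate}. My plan is to first verify that the hypotheses of both theorems are satisfied under the assumptions of the corollary, then apply them in sequence, and finally handle the small discrepancy in the $\delta$-argument that arises from the $o(\delta)$ error in the LDP.

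First I would check hypotheses. The condition $\min\{\delta p^k, \delta^2 p\} \ge N^{-1/(6(k-1))} \log N$ (with the stated relaxations for $k = 3, 4$) together with $\delta = O(1)$ and $p = p_N \to 0$ is exactly the hypothesis of Theorem~\ref{thm:ldpkap}, so that theorem yields
\[
-\log \PP(X_k \ge (1+\delta)\EE X_k) = (1+o(1)) \, \phi_p^{(k,\Omega)}(\delta + o(\delta)).
\]
The assumption $\delta^{-3} p^{k-2} (\log(1/p))^2 \to \infty$ puts us in the microscopic regime of Theorem~\ref{thm:micro-rate}.

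Next, I would evaluate $\phi_p^{(k,\Omega)}(\delta')$ for $\delta' = \delta + o(\delta)$. Since $\delta' = \delta(1+o(1))$, we have $(\delta')^{-3} p^{k-2}(\log(1/p))^2 \to \infty$ as well, so Theorem~\ref{thm:micro-rate} applies with $\delta$ replaced by $\delta'$, giving
\[
\phi_p^{(k,\Omega)}(\delta') = \left(\frac{1}{2k^2}+o(1)\right)(\delta')^2 Np \quad \text{or} \quad \left(\frac{1}{2\gamma_k}+o(1)\right)(\delta')^2 Np,
\]
in the $\Z/N\Z$ and $[N]$ cases respectively. Because $(\delta')^2 = \delta^2(1+o(1))^2 = (1+o(1))\delta^2$, the leading constant is unchanged and
\[
\phi_p^{(k,\Omega)}(\delta + o(\delta)) = \left(\frac{1}{2k^2}+o(1)\right)\delta^2 Np
\quad \text{or} \quad
\left(\frac{1}{2\gamma_k}+o(1)\right)\delta^2 Np.
\]

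Combining the two displays and dividing by $\delta^2 N p$ yields the claimed limits. There is no real obstacle here: the content is packaged in Theorems~\ref{thm:ldpkap} and~\ref{thm:micro-rate}, and the only point requiring a moment's care is the continuity of $\phi_p^{(k,\Omega)}$ in $\delta$. In the microscopic regime this is automatic because the asymptotic formula is purely quadratic in $\delta$, so the $o(\delta)$ perturbation is absorbed into the $(1+o(1))$ prefactor; in particular no additional regularity of the variational functional is needed.
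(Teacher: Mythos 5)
Your proposal is correct and takes exactly the approach the paper intends: combine Theorem~\ref{thm:ldpkap} with Theorem~\ref{thm:micro-rate}, observing that the $o(\delta)$ shift in the LDP is harmless because the microscopic rate is quadratic in $\delta$, so replacing $\delta$ by $\delta+o(\delta)$ only perturbs the answer by a $(1+o(1))$ factor. The paper states this corollary without further proof for precisely these reasons.
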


The upper bound on $\phi_p^{(k,\Omega)}$ in Theorem~\ref{thm:micro-rate} for $\Omega = \Z/N\Z$ is obtained by taking the constant function on $\Omega$ with value $p(1+ \delta)^{1/k}$, which turns out to be tight asymptotically to the first order. This behavior, where the solution to the variational problem is obtained by a constant function, at least asymptotically, suggests that the reason for many $k$-APs in the microscopic scale is a uniform boost in the density of the set, and such phenomena are referred to in the literature as \emph{replica symmetry}~\cite{CV11}. (Admittedly we are somewhat abusing terminology here, as replica symmetry in previous works \cite{CV11,LZ15,Zhao-lower} on random graphs refer to setting of constant $p$ and $\delta$). In Section~\ref{sec:replica} we record some partial results on replica symmetry for constant $p$ and $\delta$ for $k$-APs. On the contrary, in the macroscopic scale, many $k$-APs are created by a smaller set arranged in a special structure, for example, an interval, and this is referred as \emph{replica symmetry breaking}.

When $\Omega=[N]$, in the microscopic scale, the asymptotically optimal solution to the variational problem  turns out not to be a constant function, but rather, a function that assigns each $a \in [N]$ to a number proportional to the number of $k$-APs in $[N]$ containing $a$. This is due to the asymmetry of the elements in $[N]$, as the elements in the middle bulk are contained in more $k$-APs than those in the fringe. Even though the constant function does not asymptotically minimize the variational problem in this setting, the solution nevertheless exhibits some features of replica symmetry (by analogy to the $\Z/N\Z$ setting). We find this new phenomenon interesting, as we are not aware of analogous results in the random graph setting.

\section{Gaussian width and non-linear large deviations}
\label{sec:ldpkap}

In this section we apply Eldan's non-linear large deviation principle \cite{Eldan} to $k$-AP upper tails, reducing the large deviation rate problem to a variational problem. The proof relies on bounding the Gaussian width of a set of gradients, which will be done in Section~\ref{sec:gw}.

\subsection{Eldan's LDP}
\label{sec:ldpgw}

We start with a short discussion of Eldan's \cite{Eldan} result (adapted to our setting). For any $K \subset \R^N$ define the \emph{Gaussian width} of $K$ by
\[
\GW(K) := \EE \bigl[ \sup_{x \in K} \anglb{x, Z} \bigr]
\]
where the expectation is taken over $Z \sim \Normal(0,I_N)$, a standard Gaussian random vector in $\RR^N$.

For any function $F \colon \{0,1\}^N \to \RR$, define its discrete derivatives by
\[
  \DiscDerv_i F(y) =
     F(y_1, \dots, y_{i-1}, 1, y_{i+1}, \dots, y_N)
     - F(y_1, \dots, y_{i-1}, 0, y_{i+1}, \dots, y_N)
\]
for any $i \in [N]$ and $y=(y_1, y_2,\dots, y_N) \in \{0, 1\}^N$, and its discrete gradient by
\[
  \DiscGrad F (y)= (\DiscDerv_1 F (y), \dots, \DiscDerv_N F(y)).
\]
A key quantity is the Gaussian width of the set of all discrete gradients of $F$, which we denote by
\begin{equation} \label{eq:DiscGW}
	\DiscGW(F) := \GW \left(\left\{\DiscGrad F(y) : y \in \{0,1\}^N \right\}\cup \{0\}\right).
\end{equation}
Define the \emph{discrete Lipschitz constant} of $F$ by
\[
\DLip(f)=\max_{i \in [N], y \in \{0, 1\}^N} \DiscDerv_i f(y).
\]
 
Improving an earlier result of Chatterjee and Dembo~\cite{CD},  Eldan \cite{Eldan} proved a large deviation principle for general non-linear functions $F\colon \{0, 1\}^N \rightarrow \R$ satisfying certain hypotheses on its set of discrete gradients. The large deviation rate is given in terms of the natural variational problem:
\begin{equation} \label{eq:eldan-var}
	\varphi_p^F(t) := \inf_{y \in [0, 1]^N}\left\{ \sum_{i=1}^N I_p(y_i):  \E F(Y)\ge tN \right\},	
\end{equation}
where the expectation is taken with respect to a random vector $Y = (Y_1, Y_2, \dots, Y_N)$ with $Y_i \sim \Bernoulli(y_i)$ independently for every $i \in [N]$.

\begin{theorem}[Eldan \cite{Eldan}] \label{thm:eldan} Let $X = (X_1, X_2, \dots, X_N) \in \{0,1\}^N$ be a random vector with i.i.d.\ $X_i \sim \Bernoulli(p)$. Given a function $F\colon \{0, 1\}^N \to \RR$, for every $t,\varepsilon \in \R$ with $0<\varepsilon < \varphi_p^F(t-\varepsilon)/N$, we have
\begin{align*}
	\log \P(F(X) \ge t N) \le - \varphi_p^F(t-\varepsilon)\left(1-\frac{6L (\log N)^{1/6}}{N^{1/3}}\right)
\end{align*}
with 
\begin{equation} \label{eq:GWbound}
	L = \frac{1}{\varepsilon} \left(2 {\DLip}(F)+\frac{1}{\varepsilon \sqrt{N}}{{\DLip}(F)} + |\log (p(1-p))| \right)^{2/3} \left( \DiscGW(F) + \frac{1}{\varepsilon} \DLip(F)^2 \right)^{1/3}.
\end{equation}
Moreover, whenever the assumption $\frac{2}{N\varepsilon^2} \DLip (F)^2 \leq \frac{1}{2}$ holds, the following lower bound holds: 
\begin{align*}
\log \P(F(X)\geq (t-\varepsilon)N) \geq -\varphi_p^F(t)\left(1+\frac{2}{N\varepsilon^2}  \DLip(F)^2 \right) -\log 10.
\end{align*}
\end{theorem}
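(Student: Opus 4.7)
The plan is to follow Eldan's stochastic-localization framework, which deduces both bounds from a coupling between $X \sim \nu := \Bernoulli(p)^{\otimes N}$ and a measure-valued martingale $(\mu_t)_{t \ge 0}$ driven by a Brownian motion in $\RR^N$: one initializes $\mu_0 = \nu$ and progressively tilts by infinitesimal Gaussian observations of $X$ so that each $\mu_t$ remains a product of Bernoullis with random marginals $\sigma_t \in [0,1]^N$, eventually concentrating on a point mass at $X$.

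The lower bound can be handled by a direct tilting argument without invoking the full diffusion. Fix $y \in [0,1]^N$ attaining the variational problem \eqref{eq:eldan-var} at level $t$, and let $\mu$ be the product measure with marginals $y_i$, so that $H(\mu \| \nu) = \sum_i I_p(y_i) = \varphi_p^F(t)$ and $\EE_\mu F \ge tN$. An Efron--Stein bound gives $\mathrm{Var}_\mu F \le N \, \DLip(F)^2$, so Chebyshev combined with the hypothesis $2 \DLip(F)^2 / (N \e^2) \le 1/2$ yields
\begin{align*}
\mu\bigl(F \ge (t-\e) N\bigr) \;\ge\; 1 - \frac{\DLip(F)^2}{N \e^2} \;\ge\; \frac{1}{1 + 2\DLip(F)^2/(N\e^2)}.
\end{align*}
Applying the data-processing inequality for relative entropy to the event $A = \{F \ge (t-\e)N\}$ gives
\begin{align*}
-\log \PP(A) \;\le\; \frac{H(\mu \| \nu) + \log 2}{\mu(A)},
\end{align*}
which combines with the previous display to produce the multiplicative factor $(1 + 2 \DLip(F)^2/(N \e^2))$ on $\varphi_p^F(t)$, together with the additive constant $\log 10$.

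The upper bound is where the stochastic localization is essential. A Girsanov-type computation for the diffusion yields an entropy identity of the schematic form
\begin{align*}
\EE \sum_i I_p(\sigma_{T,i}) \;=\; \tfrac{1}{2}\,\EE \int_0^T \| u_s \|^2 \, ds,
\end{align*}
where $u_s$ is a predictable drift that one is free to select. Conditioning on the rare event $\{F(X) \ge tN\}$ translates, via Bayes' rule on the tilted diffusion, into the requirement that $\EE[F \mid \mathcal{F}_t]$ be pushed to level $(t-\e) N$; the minimal $L^2$ cost of such a drift is $\varphi_p^F(t-\e)$, up to an error that depends on how far $\EE[F \mid \mathcal{F}_t]$ can stray from its target as the diffusion runs. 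This stray is controlled, by a Gaussian-process comparison applied coordinatewise along the diffusion, in terms of the expected supremum of $y \mapsto \langle \DiscGrad F(y), Z\rangle$ over $y \in \{0,1\}^N$, which is precisely $\DiscGW(F)$.

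The hardest step is turning this qualitative picture into the explicit error factor $1 - 6L (\log N)^{1/6} N^{-1/3}$. One uses a chaining bound with scale $\e$ on the Gaussian process plus $\DLip(F)$-Lipschitz control to produce a sub-Gaussian tail for the fluctuation of $\EE[F \mid \mathcal{F}_t]$, then optimizes the diffusion horizon $T$: balancing a variance contribution proportional to $\DLip(F)^2$, a supremum contribution proportional to $\DiscGW(F)$, and a logarithmic Bernoulli-to-Gaussian discretization cost measured by $|\log(p(1-p))|$ is exactly what produces the composite quantity $L$ in \eqref{eq:GWbound} and the $N^{-1/3}(\log N)^{1/6}$ rate. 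Everything downstream in the paper uses this theorem as a black box, so the task for $k$-APs reduces to bounding $\DiscGW(T_k)$ and $\DLip(T_k)$ sharply enough to make $L$ small relative to $N^{1/3}(\log N)^{-1/6}$ under the hypothesis \eqref{eq:hyp-p-lower}.
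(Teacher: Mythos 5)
This statement is Theorem~\ref{thm:eldan}, which the paper attributes to Eldan~\cite{Eldan} and invokes as a black box; the paper contains no proof of it, so there is nothing internal to compare your argument against. With that understood, your sketch is a reasonable reconstruction of Eldan's argument at the level of ideas.

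Your lower-bound argument---take the minimizer $y$ of \eqref{eq:eldan-var}, let $\mu$ be the product measure with marginals $y_i$ so that $H(\mu\,\|\,\nu) = \varphi_p^F(t)$, bound $\mathrm{Var}_\mu F \le N\,\DLip(F)^2$ via Efron--Stein, apply Chebyshev under the assumption $2\DLip(F)^2/(N\e^2) \le 1/2$ to get $\mu(F \ge (t-\e)N) \ge 1/(1 + 2\DLip(F)^2/(N\e^2))$, then finish with an entropy change-of-measure inequality of the form $-\log\PP(A) \le (H(\mu\,\|\,\nu) + O(1))/\mu(A)$---is essentially the elementary argument that appears in Eldan's paper, and the additive $\log 10$ comfortably absorbs the $O(1)$ constants (whether one gets $\log 2$, $1/e$, or $2/e$ there depends on how the entropy inequality is arranged, but none of that matters). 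One small remark: $\DLip$ is defined in the paper as a maximum of discrete derivatives without absolute values, so if you want the Efron--Stein step to hold as stated you implicitly need $|\DiscDerv_i F| \le \DLip(F)$, which does hold for the coordinatewise-monotone $F = T_k/N$ used downstream and is harmless for the intended meaning.

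Your upper-bound sketch correctly identifies stochastic localization as the engine, correctly places $\DiscGW(F)$ as the quantity controlling the fluctuation of $\EE[F \mid \mathcal{F}_t]$, and correctly names the three ingredients that the composite quantity $L$ in \eqref{eq:GWbound} must balance. But this part is a narrative, not a derivation: the exact form of the localizing drift and the measure-valued martingale, the Girsanov/F\"ollmer-type entropy--cost identity that makes $\sum_i I_p(\sigma_{T,i})$ appear, the chaining/comparison estimate, and the optimization of the horizon that yields the specific rate $N^{-1/3}(\log N)^{1/6}$ are all invoked without being carried out. That is exactly where the technical substance of Eldan's theorem lives, so what you have is the roadmap rather than the proof. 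For the purposes of this paper---which uses the theorem only as a cited tool---the roadmap is the appropriate level of detail, but you should not mistake it for a self-contained argument.
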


Denote the usual gradient and partial derivatives of $F \colon \RR^N \to \RR$ by
\[
\nabla F := (\partial_1 F, \dots, \partial_N F).
\]
Define 
\begin{equation}
	\label{eq:GW-F}
\GW(F) := \GW \left( \left\{\nabla F(y) \colon y \in \{0,1\}^N \right\} \cup \{0\} \right)
\end{equation}
for the continuous analog of $\DiscGW(F)$ from \eqref{eq:DiscGW}. These two quantities differ only negligibly in our applications.

\begin{lemma} \label{lem:discGW-GW}
	For any twice-differentiable $F \colon \RR^N \to \RR$, we have
	\[
	|\DiscGW(F) - \GW(F)| \lesssim \sum_{i=1}^N \sup_{x \in [0,1]^N} |\partial_{ii} F (x)|,
	\]
	where $\partial_{ij} F = \partial^2 F/\partial x_i \partial x_j$ denotes the $(i,j)$-th partial derivative of $F$ and $\DiscGW(F)$ is defined by considering the restriction of $F$ to $\{0,1\}^N$.
\end{lemma}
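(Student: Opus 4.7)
The approach is to compare $\DiscGrad F(y)$ and $\nabla F(y)$ coordinatewise at each $y \in \{0,1\}^N$ via a Taylor-type estimate, and then transfer that pointwise comparison to the two Gaussian widths through a direct sup-sup splitting argument.

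First, I would fix $y \in \{0,1\}^N$ and $i \in [N]$ and write, by the fundamental theorem of calculus applied to the smooth extension of $F$ to $[0,1]^N$,
\[
\DiscDerv_i F(y) = \int_0^1 \partial_i F(y_1,\dots,y_{i-1},t,y_{i+1},\dots,y_N)\, dt.
\]
Subtracting $\partial_i F(y)$ (which uses $t = y_i \in \{0,1\}$) and applying the fundamental theorem of calculus once more in the $i$-th variable to $\partial_i F$ yields
\[
|\DiscDerv_i F(y) - \partial_i F(y)| \le \int_0^1 |t - y_i|\, dt \cdot \sup_{x \in [0,1]^N}|\partial_{ii}F(x)| = \tfrac{1}{2}M_i,
\]
where $M_i := \sup_{x \in [0,1]^N}|\partial_{ii}F(x)|$. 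Writing $v(y) := \DiscGrad F(y) - \nabla F(y) \in \RR^N$, this gives the coordinatewise bound $|v(y)_i| \le M_i/2$ uniformly in $y \in \{0,1\}^N$.

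Second, denote by $K_{\mathrm{disc}}$ and $K_{\mathrm{cts}}$ the two subsets of $\RR^N$ whose Gaussian widths define $\DiscGW(F)$ and $\GW(F)$; both contain the origin. For $Z \sim \Normal(0,I_N)$, the identity $\DiscGrad F(y) = \nabla F(y) + v(y)$ combined with the elementary inequality $\max(0,a+b) \le \max(0,a) + |b|$ gives
\[
\sup_{x \in K_{\mathrm{disc}}}\langle x,Z\rangle \le \sup_{x \in K_{\mathrm{cts}}}\langle x,Z\rangle + \sup_{y \in \{0,1\}^N}|\langle v(y),Z\rangle|,
\]
and the last term is bounded by $\sum_i (M_i/2)|Z_i|$ via the triangle inequality. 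Taking expectations and using $\EE|Z_i| = \sqrt{2/\pi}$, I obtain $\DiscGW(F) \le \GW(F) + O\bigl(\sum_i M_i\bigr)$; the reverse direction follows by symmetry (swap the roles of $\DiscGrad$ and $\nabla$ and use $-v$), which gives the claim.

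There is no genuine obstacle here — the only point requiring a little care is the presence of the auxiliary $\{0\}$ in both $K_{\mathrm{disc}}$ and $K_{\mathrm{cts}}$, which is handled uniformly by the $\max(0,\cdot)$ version of the sub-additivity inequality above, so no separate case analysis is needed.
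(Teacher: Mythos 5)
Your proof is correct and essentially the same as the paper's: you bound $|\DiscDerv_i F - \partial_i F|$ coordinate-by-coordinate using a second-derivative estimate (you use the fundamental theorem of calculus twice where the paper cites the intermediate value theorem twice — equivalent tools here), and then pass to Gaussian widths via the triangle inequality and $\EE|Z_i| = O(1)$. Your explicit handling of the $\{0\}$ in both sets via $\max(0,a+b)\le\max(0,a)+|b|$ is a minor refinement of what the paper leaves implicit, but the underlying argument is the same.
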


\begin{proof}
	Applying the intermediate value theorem (twice), we have
	\[
	\absb{\DiscDerv_i F(y) - \partial_i F(y)} \le \sup_{x \in [0,1]^N} |\partial_{ii} F (x)|
	\]
	for any $y \in \{0,1\}^N$. Thus for any $Z = (Z_1, \dots, Z_N) \in \RR^N$ and any $y \in \{0,1\}^N$,
	\begin{align*}
		|\anglb{\DiscGrad F(y), Z} - \anglb{\nabla F(y), Z}|
		\le \sum_i \sup_{x \in [0,1]^N} |\partial_{ii} F (x)| |Z_i|.
	\end{align*}
	The result follows by first taking the supremum over $y$, and then taking an expectation over $Z \sim \Normal(0, I_N)$ and using $\EE |Z_i| = O(1)$.
\end{proof}

\subsection{LDP for $k$-AP}

Now, we apply Theorem~\ref{thm:eldan} to derive a large deviation principle for $k$-AP counts, conditioned on bounds for the Gaussian width of the gradients of the $k$-AP counting function.

By viewing points in $\RR^\Omega$ as functions $\Omega \to \RR$, the previously defined $k$-AP functional $T_k$ can be viewed as a function on $\RR^{\Omega}$ by
\[
T_k(y)= \sum_{a,b} y_a y_{a+b} y_{a+2b} \dots y_{a+(k-1)b}, \quad y \in \RR^\Omega.
\]
In the case $\Omega = [N]$, the indices $a$ and $b$ both range over $\ZZ$, and we set $y_a = 0$ if $a \notin [N]$. In the case $\Omega = \ZZ/N\ZZ$, the indices $a$ and $b$ both range over $\ZZ/N\ZZ$ and the indices of $y$ are taken mod $N$. Recall the definition~\eqref{eq:var} of the variational problem for upper tails of $k$-AP counts, reproduced here:
\begin{equation}
	\label{eq:var-kap-pt}
	\phi_p^{(k,\Omega)}(\delta) :=  \inf_{y \in [0,1]^\Omega} \left\{ \sum_{a \in \Omega} I_p(y_a):  T_k(y) \ge (1+\delta)p^k T_k(\Omega) \right\}.
\end{equation}
We will apply Theorem~\ref{thm:eldan} to the function $F = T_k/N$. The relevant Gaussian width is
\[
\GW(T_k/N) = \GW \left( \left\{ \tfrac{1}{N}\nabla T_k(y) \colon y \in \{0,1\}^\Omega \right\} \right).
\]
Note that we do not need to include the origin in the definition since $\nabla T_k(0) = 0$. We have the ``trivial'' bounds:
\begin{equation}
	\label{eq:gw-trivial-bounds}
\sqrt{N} \lesssim \GW(T_k/N) \lesssim N.
\end{equation}
The lower bound comes from considering the constant vector $y = (1, \dots, 1)$, and the upper bound comes from noting that $\frac{1}{N} \nabla T_k(y)$ is coordinatewise $O(1)$ for all $y \in \{0,1\}^\Omega$.

The main result of this section is the following proposition, showing that any power-saving improvement to the trivial upper bound to $\GW(T_k/N)$ leads to a large deviation principle allowing the probability $p$ to decay as $N^{-c}$. Combining it with bounds on the Gaussian width (to be proved in the next section) gives Theorem~\ref{thm:ldpkap}.

\begin{proposition}\label{prop:ap-gw-ldp}
Fix $k \ge 3$. Suppose we have real constants $\sigma, \tau$ such that
\begin{equation}
	\label{eq:hypothesis-gw-bound}
\GW(T_k/N) = O(N^{1-\sigma} (\log N)^\tau).
\end{equation}
Let $p=p_N$ be bounded away from 1, and $\delta = \delta_N > 0$ be such that 
$\delta=O(1)$
and 
\begin{equation}
	\label{eq:p-lower-sigma-tau}
N^{-\sigma/3} (\log N)^{\tau/3+1} \lesssim \min\{\delta p^k , \phi_p^{(k,\Omega)}(\delta/2)/N \}.
\end{equation}
Then
\[
- \log \PP(T_k(\Omega_p) \ge (1 + \delta)\EE T_k(\Omega_p)) \\
= (1+o(1))\phi_p^{(k,\Omega)}(\delta + o(\delta)).
\]
\end{proposition}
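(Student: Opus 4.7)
The plan is to apply Eldan's non-linear LDP (Theorem~\ref{thm:eldan}) to the function $F = T_k/N \colon \{0,1\}^\Omega \to \RR$, with target level $t = (1+\delta) p^k T_k(\Omega)/N$. Since $\EE T_k(\Omega_p) = p^k T_k(\Omega)(1 + O((p^{k-1}N)^{-1}))$ and the first piece of \eqref{eq:p-lower-sigma-tau} forces $\delta p^{k-1} N \to \infty$, the event $\{T_k(\Omega_p) \ge (1+\delta)\EE T_k(\Omega_p)\}$ coincides with $\{T_k(\Omega_p) \ge tN\}$ after replacing $\delta$ by $\delta(1+o(1))$, which gets absorbed into the $o(\delta)$ shift in the conclusion. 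I will take the slack parameter to be $\varepsilon = \eta \delta p^k T_k(\Omega)/N$ with a factor $\eta = \eta_N \to 0$, to be tuned at the end.

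To feed Theorem~\ref{thm:eldan} I first bound its two inputs. The discrete Lipschitz constant is immediate: every $a \in \Omega$ lies in $O(N)$ $k$-APs, so $\DLip(T_k) = O(N)$ and hence $\DLip(F) = O(1)$. For the Gaussian width I use Lemma~\ref{lem:discGW-GW} to pass from $\DiscGW(F)$ to the continuous $\GW(F)$ controlled by the hypothesis \eqref{eq:hypothesis-gw-bound}; the error in Lemma~\ref{lem:discGW-GW} is negligible because $T_k$ is multilinear in each coordinate off the trivial $b=0$ diagonal, so only the $y_a^k$ terms contribute to $\partial_{ii} T_k$ and $\sum_i \sup_{x \in [0,1]^\Omega} |\partial_{ii}(T_k/N)| = O(1)$. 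This gives $\DiscGW(F) \lesssim N^{1-\sigma}(\log N)^\tau$. Next I identify Eldan's variational problem $\varphi_p^F$ with $\phi_p^{(k,\Omega)}$: for $y \in [0,1]^\Omega$ and independent $Y_i \sim \Bernoulli(y_i)$, the difference $\EE T_k(Y) - T_k(y) = \sum_a(y_a - y_a^k)$ (plus, for composite $N$ in the cyclic case, other coincidence corrections) is $O(N)$, a lower-order perturbation of the threshold $\asymp p^k N^2$. Therefore $\varphi_p^F(t + s) = \phi_p^{(k,\Omega)}(\delta(1+o(1)))$ uniformly in $|s| \le \varepsilon$.

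The remaining task is to verify the two error corrections in Theorem~\ref{thm:eldan}. Substituting $\DLip(F) = O(1)$, $\DiscGW(F) \lesssim N^{1-\sigma}(\log N)^\tau$, $|\log(p(1-p))| = O(\log(1/p))$, and $\varepsilon \asymp \eta \delta p^k N$ into \eqref{eq:GWbound} gives
\[
\frac{L(\log N)^{1/6}}{N^{1/3}} \lesssim \frac{(\log(1/p))^{2/3}(\log N)^{\tau/3 + 1/6}}{\eta \cdot \delta p^k \cdot N^{1+\sigma/3}},
\]
and the first piece of \eqref{eq:p-lower-sigma-tau} is calibrated precisely so that this is $o(1)$ for any $\eta \to 0$ slowly enough (e.g.\ $\eta = 1/\log\log N$). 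The lower-bound correction $\DLip(F)^2/(N\varepsilon^2) \lesssim (\eta \delta p^k)^{-2} N^{-3}$ is then also $o(1)$, and the side condition $\varepsilon < \varphi_p^F(t-\varepsilon)/N$ required by the upper bound of Theorem~\ref{thm:eldan} follows from the second piece of \eqref{eq:p-lower-sigma-tau}. Applying the upper bound of Theorem~\ref{thm:eldan} at level $t$ and the lower bound at level $t + \varepsilon$, then combining, yields the stated two-sided asymptotic. The main obstacle is simply this simultaneous balancing of $\eta$ against all four quantitative conditions of Eldan's theorem; once the Gaussian width bound \eqref{eq:hypothesis-gw-bound} is taken as a black box, everything else is essentially bookkeeping.
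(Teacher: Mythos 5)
Your overall strategy coincides with the paper's: apply Eldan's Theorem~\ref{thm:eldan} to $F = T_k/N$, control $\DLip(F)$, $\DiscGW(F)$, and the $O(N)$ discrepancy between $\EE T_k(Y)$ and $T_k(y)$, then choose a slack $\varepsilon$ so that both the $L$-correction and the side condition $\varepsilon < \varphi_p^F(t-\varepsilon)/N$ are satisfied. Most of your bookkeeping is sound, but there are two issues. A minor one first: since Eldan's theorem concerns $\PP(F(X) \ge tN)$, the threshold should be $t = (1+\delta)p^k T_k(\Omega)/N^2$, not $/N$; your $t$ and $\varepsilon$ are both a factor $N$ too large, and consequently your displayed bound on $L(\log N)^{1/6}/N^{1/3}$ has a spurious extra $N^{-1}$. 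After correcting, the right-hand side becomes $(\log(1/p))^{2/3}(\log N)^{\tau/3+1/6}/(\eta\,\delta p^k N^{\sigma/3}) \lesssim 1/(\eta(\log N)^{1/6})$, which is still $o(1)$ for $\eta=1/\log\log N$, so this slip does not break your argument.

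The genuine gap is in your verification of the side condition $\varepsilon < \varphi_p^F(t-\varepsilon)/N$. You take $\varepsilon \asymp \eta\,\delta p^k$ and then claim the side condition ``follows from the second piece of \eqref{eq:p-lower-sigma-tau},'' but that hypothesis only bounds $\delta p^k$ and $\phi_p^{(k,\Omega)}(\delta/2)/N$ separately from below by $N^{-\sigma/3}(\log N)^{\tau/3+1}$; it gives no comparison between them, and in particular does not imply $\delta p^k \lesssim \phi_p^{(k,\Omega)}(\delta/2)/N$. In fact the opposite can hold: using the constant function $q=p(1+\delta/2)^{1/k}$ one gets $\phi_p^{(k,\Omega)}(\delta/2)/N \lesssim I_p(q) \asymp \delta^2 p$, so whenever $\delta \ll p^{k-1}$ (e.g.\ $p$ bounded away from $0$ and $\delta\to 0$, a regime the proposition must cover) one has $\phi/N \ll \delta p^k$. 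There the side condition would force $\eta \ll (\phi/N)/(\delta p^k) \lesssim \delta/p^{k-1}$, which decays polynomially in $N$ and is incompatible with the requirement $\eta \gg (\log N)^{-1/6}$ (roughly) coming from the $L$-correction. The paper sidesteps this by taking $\varepsilon = N^{-1/3}(\log N)^{11/12}\GW(T_k/N)^{1/3} \lesssim N^{-\sigma/3}(\log N)^{\tau/3+11/12}$, which sits a factor $(\log N)^{-1/12}$ below the common lower bound of \eqref{eq:p-lower-sigma-tau} and is therefore automatically $o(\delta p^k)$ \emph{and} $o(\phi_p^{(k,\Omega)}(\delta/2)/N)$, while the $L$-correction still vanishes like $(\log N)^{-1/12}$. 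Your argument would be repaired by replacing $\varepsilon = \eta\,\delta p^k$ with $\varepsilon = \eta\cdot\min\{\delta p^k, \phi_p^{(k,\Omega)}(\delta/2)/N\}$ (or simply by adopting the paper's choice), but as written the side condition has not been established.
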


As long as one can prove an estimate~\eqref{eq:hypothesis-gw-bound} on Gaussian width with $\sigma > 0$, we can allow $p$ to decay as $N^{-c}$ for some constant $c > 0$. 
In Theorem~\ref{thm:gw-bound}, we show that one can take $\sigma = 1/(2(k-1))$ and $\tau = 0$ (with better bounds for $k=3,4$). From the asymptotic solutions to the variational problems (Theorem~\ref{thm:rate} and \ref{thm:micro-rate}), and noting that $\phi_p^{(\Omega,k)}(\delta)$ is monotonic in $\delta$, we have, as long as $\delta p^k N^2 \to \infty$,
\[
\phi_p^{(\Omega,k)}(\delta/2)/N \asymp \min\{\sqrt{\delta} p^{k/2} \log(1/p), \delta^2 p\}.
\]
Combining these asymptotics, we see that hypothesis~\eqref{eq:p-lower-sigma-tau} translates into hypothesis~\eqref{eq:hyp-p-lower} in Theorem~\ref{thm:ldpkap}, and hence Proposition~\ref{prop:ap-gw-ldp} implies Theorem~\ref{thm:ldpkap}.

\medskip

In the rest of this section, we prove Proposition~\ref{prop:ap-gw-ldp}.
We first prove some easy estimates on the various quantities that appear in Theorem~\ref{thm:eldan}. Recall the definitions of $\DiscGW(F)$ and $\GW(F)$ from \eqref{eq:DiscGW} and \eqref{eq:GW-F}.

\begin{lemma} \label{lem:DLip-DiscGW-T_k-bound}
	For any $k \ge 3$, we have
	\[
		\DLip(T_k/N) = O(1)
	\]
	and
	\[
		\DiscGW(T_k/N) = \GW(T_k/N) + O(1).
	\]
\end{lemma}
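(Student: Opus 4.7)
The plan is to verify each of the two assertions by direct combinatorial inspection of the multilinear expansion of $T_k$.

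For the Lipschitz bound, I would fix $i \in \Omega$ and $y \in \{0,1\}^\Omega$ and expand $\DiscDerv_i T_k(y)$ as a sum over those pairs $(a,b)$ such that $i$ lies in the $k$-AP $\{a, a+b, \dots, a+(k-1)b\} \subset \Omega$. Each summand is a product of $y$-coordinates and thus bounded by $1$, so it suffices to bound the number of such $(a,b)$. Fixing the position $j \in \{0,1,\dots,k-1\}$ at which $i$ appears and the common difference $b$ (at most $N$ choices), the first coordinate $a = i - jb$ is determined, giving at most $kN$ valid pairs. Thus $\DiscDerv_i T_k(y) \le kN$, which yields $\DLip(T_k/N) = O(1)$.

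For the second identity I would apply Lemma~\ref{lem:discGW-GW} to $F = T_k/N$, reducing the claim to showing $\sum_{i \in \Omega} \sup_{x \in [0,1]^\Omega} |\partial_{ii}(T_k/N)(x)| = O(1)$. Since $T_k(y)$ is a sum of monomials $y_a y_{a+b} \cdots y_{a+(k-1)b}$, the second partial $\partial_{ii}$ of any single monomial vanishes unless $i$ occurs at two distinct positions of the AP, i.e., $a+jb = a+j'b = i$ for some $j \ne j'$, which forces $(j-j')b \equiv 0$ in the ambient group. In the ambient group $\Z$ (case $\Omega = [N]$) this means $b=0$ and $a = i$, contributing the single monomial $y_i^k$ with $\partial_{ii}y_i^k = k(k-1)y_i^{k-2} \le k(k-1)$. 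In $\Z/N\Z$ there are additionally $O_k(1)$ degenerate pairs: for each ordered pair $j > j'$ in $\{0,\dots,k-1\}$, the common difference $b$ lies in a subgroup of size $\gcd(N, j-j') \le k-1$, and $a$ is then forced by $a + jb \equiv i \pmod N$. Each such monomial contributes at most $k(k-1)$ to $|\partial_{ii} T_k|$, so $\sup_x |\partial_{ii} T_k(x)| = O_k(1)$ uniformly in $i$. Dividing by $N$ and summing over $i \in \Omega$ yields the required bound $\sum_i \sup_x |\partial_{ii}(T_k/N)(x)| = O(N \cdot 1/N) = O(1)$.

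Neither step presents any real obstacle; both are essentially immediate once the definition of $T_k$ is unpacked. The only mild subtlety, bounding the number of degenerate APs through $i$ in $\Z/N\Z$ when $N$ is composite, is handled by the divisor-counting argument above, and contributes only an $O(1)$ correction that is absorbed into the error term.
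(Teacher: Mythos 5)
Your proof is correct and takes essentially the same route as the paper: bound the discrete Lipschitz constant by counting the $O(N)$ monomials of $T_k$ through a fixed coordinate, and invoke Lemma~\ref{lem:discGW-GW} together with a uniform $O(1)$ bound on $\partial_{ii} T_k$. The paper states these two observations without elaboration; you have merely spelled out the bookkeeping (including the minor case analysis of degenerate APs in $\Z/N\Z$), and while your count of $b$-values is stated as ``at most $N$'' where for $\Omega=[N]$ it is really at most $2N$ since $b$ may be negative, this does not affect the $O(1)$ conclusion.
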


\begin{proof}
	The first claim follows from noting that in $T_k(y) = \sum_{a,b} y_a y_{a+b} \cdots y_{a+(k-1)b}$, every variable $y_a$ appears in $O(N)$ terms.
	
	The second claim follows from Lemma~\ref{lem:discGW-GW}, as $\partial_{aa} T_k(y)$ is uniformly bounded for all $y \in [0,1]^\Omega$ and all $a \in \Omega$.
\end{proof}

\begin{proof}[Proof of Proposition~\ref{prop:ap-gw-ldp}] We apply Theorem \ref{thm:eldan} for $F = T_k/N$. Set
\[
\varepsilon = N^{-1/3} (\log N)^{11/12}\GW(T_k/N)^{1/3} \lesssim N^{-\sigma/3} (\log N)^{\tau/3 + 11/12}.
\]
By \eqref{eq:p-lower-sigma-tau},
\begin{equation}\label{eq:eps-small}
\varepsilon  =  o(\delta p^k) \quad \text{and} \quad 
\varepsilon = o(\phi_p^{(k,\Omega)}(\delta/2)/N).
\end{equation}

Note that $\sigma \le 1/2$ due to the lower bound $\GW(T_k/N) \gtrsim \sqrt{N}$ in \eqref{eq:gw-trivial-bounds}. So in particular, $N \varepsilon^2 \to \infty$. Also, $\log(1/p) = O(\log N)$ by \eqref{eq:p-lower-sigma-tau}.

Recall $L$ from \eqref{eq:GWbound}. Using Lemma~\ref{lem:DLip-DiscGW-T_k-bound} and earlier estimates, we have
\[
L \lesssim \varepsilon^{-1} (\log N)^{2/3} \GW(T_k/N)^{1/3}.
\]
Thus, as $N \to \infty$,
\[
L N^{-1/3}(\log N)^{1/6} 
\lesssim
\varepsilon^{-1} N^{-1/3} (\log N)^{5/6} \GW(T_k/N)^{1/3}
=
(\log N)^{-1/12} \to 0.
\]

Let $Y = (Y_1, \dots, Y_N)$ be a random vector with $Y_i \sim \Bernoulli(y_i)$ independently for all $i \in [N]$. Then
\[
\EE T_k(Y) = T_k(y) + O(N) = T_k(y) + o(\delta p^k N^2).
\]
The discrepancy $O(N)$ comes from terms in $T_k(y)$ where some $y_i$ may appear more than once. Setting 
\[
t = (1+\delta)p^k T_k(\Omega)/N^2,
\]
we see that
\[
\EE T_k(Y)/N \ge (t + o(\delta p^k)) N \quad \text{is equivalent to} \quad T_k(y) \ge (1+\delta + o(\delta))p^kT_k(\Omega).
\]
Comparing the definition of  $\varphi^F_p(t)$ from \eqref{eq:eldan-var} for $F = T_k/N$ and $\phi^{(k, \Omega)}_p(\delta)$ from \eqref{eq:var-kap-pt} (and noting that $\varphi^F_p(t)$ is non-decreasing in $t$ and $\phi^{(k,\Omega)}_p(\delta)$ is non-decreasing in $\delta$), we obtain
\[
\varphi^{T_k/N}_p (t \pm \varepsilon) = \phi_p^{(k, \Omega)}(\delta + o(\delta)).
\]
The hypothesis $0 < \varepsilon < \tfrac{1}{N} \varphi_p^{T_k/N}(t - \varepsilon)$ in Theorem~\ref{thm:eldan} is also satisfied due to \eqref{eq:eps-small}.

Applying Theorem~\ref{thm:eldan}, we obtain the upper bound to the log-probability
\[
\log \PP(T_k(\Omega_p) \ge (1+\delta)p^k T_k(\Omega))
\le -(1-o(1)) \varphi^{T_k/N}_p (t - \varepsilon) \sim - \phi_p^{(k,\Omega)}(\delta - o(\delta)),
\]
as well as the lower bound (changing $t$ to $t + \varepsilon$ when applying Theorem~\ref{thm:eldan}),
\[
\log \PP(T_k(\Omega_p) \ge (1+\delta)p^k T_k(\Omega))
\ge -(1-o(1)) \varphi^{T_k/N}_p(t+\varepsilon) - O(1)
\sim - \phi_p^{(k,\Omega)}(\delta + o(\delta)).
\]
Combining the upper and lower bounds, and recalling that $\EE T_k(\Omega_p) \sim p^k T_k(\Omega)$, the result follows. 
\end{proof}

\section{Bounds on Gaussian width}
\label{sec:gw}

In this section, we establish bounds on the Gaussian width of the set of gradients of $T_k$. These bounds can be used in Proposition~\ref{prop:ap-gw-ldp} from the previous section to deduce Theorem~\ref{thm:ldpkap} on the LDP for APs. 

Our main result of this section is stated below. Recall from Section~\ref{sec:ldpgw} that
\begin{align}
\GW(T_k /N)
&= \GW(\{ \nabla T_k(y)/N : y \in \{0,1\}^\Omega\}) \nonumber
\\
&= \EE_{Z \sim \Normal(0, I_N)} \sup_{y \in \{0,1\}^\Omega} \anglb{\tfrac{1}{N} \nabla T_k(y), Z}. \label{eq:GW-Tk}
\end{align}

\begin{theorem} \label{thm:gw-bound}
For any fixed $k \ge 3$,
\begin{align}\label{eq:Tkgradient}
 \GW(T_k/N) = O(N^{1 - \frac{1}{2(k-1)}}).
\end{align}
Furthermore, for $k = 3$, the bound can be tightened to 
\[
\GW(T_3/N) = \Theta(\sqrt{N \log N}).
\] 
For $k = 4$, the bound can be improved to 
\[
\GW(T_4/N) = O( N^{3/4} (\log N)^{1/4}).
\]
\end{theorem}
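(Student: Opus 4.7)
\textbf{Proof plan for Theorem~\ref{thm:gw-bound}.}
My plan is to reduce to the case $\Omega = \mathbb{Z}/N\mathbb{Z}$ with $N$ prime (handling $\Omega = [N]$ by embedding into a cyclic group of prime order $\asymp N$ with negligible boundary losses). Writing $f = \mathbf{1}_A$ for $A \subset \Omega$, the cyclic symmetry in the index of the ``missing'' AP position in $\partial_c T_k(\mathbf{1}_A)$ reduces the Gaussian width, up to a constant, to bounding
\[
\mathbb{E}_Z \sup_{A \subset \Omega} \frac{1}{N}\left|\sum_{a,b \in \Omega} Z_a \prod_{i=1}^{k-1} f(a + ib)\right|.
\]
This is a $(k-1)$-linear form in $f$ contracted against Gaussian $Z$, to be bounded uniformly in $A$.

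For $k = 3$, the plan is a direct Fourier computation. The function $g(a) := \sum_b f(a+b) f(a+2b)$ has Fourier transform $\hat g(\xi) = \hat f(2\xi) \overline{\hat f(\xi)}$, using that $\xi \mapsto 2\xi$ is a bijection on $\mathbb{Z}/N\mathbb{Z}$ for odd $N$. By Parseval and Cauchy--Schwarz in $\xi$,
\[
\frac{1}{N}\left|\sum_a Z_a g(a)\right|
= \frac{1}{N^2}\left|\sum_\xi \hat Z(\xi) \hat f(\xi) \overline{\hat f(2\xi)}\right|
\le \frac{\|\hat Z\|_\infty}{N^2} \|\hat f\|_2^2 = \frac{|A|}{N}\|\hat Z\|_\infty \le \|\hat Z\|_\infty,
\]
where the Cauchy--Schwarz step uses $\sum_\xi |\hat f(\xi)\hat f(2\xi)| \le \|\hat f\|_2^2 = N|A|$. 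Since each $\hat Z(\xi)$ is a subgaussian complex variable of variance $\asymp N$, a standard Gaussian maximum bound gives $\mathbb{E} \|\hat Z\|_\infty \lesssim \sqrt{N \log N}$, establishing the upper bound. For the matching $\Omega(\sqrt{N \log N})$ lower bound, I plan to exhibit a structured family of $A$'s (e.g., adaptively chosen arithmetic structures or Bohr-type sets aligned with the top Fourier modes of $Z$) whose gradients yield $N$ well-separated vectors in the gradient set, so that Sudakov minoration (or a direct testing argument) recovers a $\sqrt{\log N}$ factor on top of the $\sqrt{N}$-scale diameter.

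For general $k \ge 4$, my plan is an iterative Cauchy--Schwarz in $a$ (a van der Corput / Gowers-type step): each application peels off one factor of $f$ by introducing a new shift variable $h$ and replacing $f$ with the shifted product $f(x) f(x+h)$, at the cost of a square root and a $\sqrt{N}$ factor from the trivial summation range of $h$. After $k - 3$ such steps, the original $(k-1)$-linear form reduces to a bilinear form in the iterated shift $g_{h_1, \dots, h_{k-3}}$, which I handle by the $k=3$ Fourier argument; the accumulated $h$-sums are then controlled by additive-energy identities such as $\sum_h |A \cap (A - h)| = |A|^2$. Balancing the cumulative square-root losses against the Parseval saving at the final Fourier step yields the claimed exponent $1 - 1/(2(k-1))$; for $k = 4$ a single Cauchy--Schwarz followed by the Fourier endgame retains the $\sqrt{\log N}$-factor and produces the sharper $O(N^{3/4}(\log N)^{1/4})$. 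The main obstacle will be the exponent bookkeeping for general $k$: each step doubles the number of $f$'s and introduces a new shift variable, so one must choose which variable to square against carefully, exploiting the AP's Fourier-side linear constraint (so that Parseval can be applied whenever a pair of $|\hat f|$'s share a common linear combination of frequencies) to avoid lossy estimates.
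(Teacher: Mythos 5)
Your $k=3$ argument is essentially the paper's: the Fourier identity $T_3(f,g,h) = N^2\sum_r \hat f(r)\hat g(-2r)\hat h(r)$ together with Cauchy--Schwarz controls $T_3$ by $\|\hat Z\|_\infty$, and the expected maximum of the $N$ Gaussian Fourier coefficients gives $\sqrt{N\log N}$. The lower bound sketch (Bohr-type $A$'s aligned with a large Fourier mode of $Z$) is vaguer than the paper's explicit construction $f(a)=(1+\cos(2\pi sa/N))/2$, but it is the right idea. Your $k=4$ argument is also the paper's argument in Appendix~\ref{app:gw}: one Cauchy--Schwarz reduces $T_4(h,f,f,f)$ to $\sum_s T_3(\Delta_{3s}h,\Delta_{2s}f,\Delta_s f)$, the $T_3$ Fourier bound brings in $\|\widehat{\Delta_{3s}h}\|_\infty$, and the Gaussian quadratic-chaos tail gives $O(N^{3/4}(\log N)^{1/4})$.

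The gap is in the general-$k$ claim. The paper proves $\GW(T_k/N)=O(N^{1-1/(2(k-1))})$ not by iterated Cauchy--Schwarz, but by Proposition~\ref{prop:T_k-hull-size}: partitioning $[N]$ modulo $k-1$ primes $q_i\asymp N^{1/(k-1)}$ and showing, via the Chinese remainder theorem, that $\{\nabla T_k(f)/N\}$ lies in the convex hull of $\exp(O(N^{1-1/(k-1)}))$ uniformly bounded vertex functions $v_{r_1,\dots,r_{k-1};f}$; Lemma~\ref{lem:small-set-small-gw} then converts the covering count to a Gaussian-width bound $O(\sqrt{N\log|S|})=O(N^{1-1/(2(k-1))})$. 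Your iterated Cauchy--Schwarz scheme does not produce this exponent. If you track the losses carefully, each Cauchy--Schwarz in $a$ halves the remaining gain and costs a factor of $N^{1/2}$ for the new shift variable, so $k-3$ iterations followed by the $T_3$ Fourier endgame yield $\GW(T_k/N)\lesssim N^{1-2^{-(k-2)}}\,\mathrm{polylog}(N)$. For $k=4$ this is $N^{3/4}$ and for $k=5$ it is $N^{7/8}$ (coincidentally matching $1-1/(2(k-1))$), but for $k\ge 6$ the exponent $1-2^{-(k-2)}$ is strictly larger than $1-1/(2(k-1))$ (e.g.\ $15/16>9/10$ when $k=6$), so your approach does not establish \eqref{eq:Tkgradient} for all $k$. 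The claim that ``balancing the cumulative square-root losses against the Parseval saving yields $1-1/(2(k-1))$'' is the crux and it is false as stated; there is no choice of which variable to square against that defeats the geometric decay of the per-step gain. You need the separate covering/CRT idea (or the random-sampling idea of \cite{CG16,FLW} from which it is adapted) to get the claimed power of $N$ for general $k$.
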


\begin{remark}
After a preprint of our paper had appeared, Bri\"et and Gopi~\cite{BG} improved the  bound to $\GW(T_k/N) = O(N^{1 - \frac{1}{2\lceil (k-1)/2\rceil}}(\log N)^{1/2})$ for all $k \ge 5$.
\end{remark}

We have an easy lower bound $\GW(T_k/N) \gtrsim \sqrt{N}$ deduced by taking the constant vector $y = (1, \dots, 1)$ in~\eqref{eq:GW-Tk}. We conjecture that it is essentially tight.

\begin{conjecture} \label{conj:gw-T_k}
For any fixed $k \ge 4$,
\[
\GW(T_k/N) = \sqrt{N} (\log N)^{O(1)}.
\]
\end{conjecture}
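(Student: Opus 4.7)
The plan is to reduce the Gaussian-width computation to the study of $k$-linear AP sums in $y$ and $Z$, to which tools from (higher-order) Fourier analysis apply, and then take expectation over $Z$. First, the product rule gives
\[
\tfrac{1}{N}\langle \nabla T_k(y), Z\rangle = \tfrac{1}{N}\sum_{j=0}^{k-1} \Lambda_j(y;Z), \qquad \Lambda_j(y;Z) := \sum_{a,b} Z_{a+jb}\prod_{i \ne j} y_{a+ib},
\]
so by the triangle inequality and the symmetric role of $j$ it suffices to bound $\EE_Z \sup_y |\Lambda_0(y;Z)|/N$ over $y \in \{0,1\}^\Omega$.

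For $k = 3$ I would use direct Fourier analysis. On $\Z/N\Z$, Plancherel converts $\Lambda_0(y;Z) = \sum_{a,b} Z_a y_{a+b} y_{a+2b}$ into $N \sum_\xi \widehat Z(\xi)\, \widehat y(\xi)\, \widehat y(-2\xi)$, which by Hölder and Parseval is at most $N\|\widehat Z\|_\infty \|\widehat y\|_2^2 \le N\|\widehat Z\|_\infty$, uniformly in $y \in \{0,1\}^\Omega$. Each $\widehat Z(\xi)$ is a complex Gaussian of variance $\asymp 1/N$, so the standard maximum-of-Gaussians estimate yields $\EE\|\widehat Z\|_\infty = O(\sqrt{\log N/N})$ and hence $\GW(T_3/N) = O(\sqrt{N\log N})$. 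For $\Omega = [N]$ I would embed into $\Z/N'\Z$ with $N' = \Theta(N)$ prime, large enough that no AP in $[N]$ wraps around, extending $y$ and $Z$ by zero; the cyclic analysis then transfers directly.

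For general $k$, the main tool is the generalized von Neumann inequality: the $k$-term AP sum $\sum_{a,b} f_0(a)f_1(a+b)\cdots f_{k-1}(a+(k-1)b)$ is bounded by $N^2\|f_j\|_{U^{k-1}}\prod_{i\ne j}\|f_i\|_\infty$, where $\|\cdot\|_{U^{k-1}}$ is the Gowers uniformity norm. Placing $Z$ in the $0$-th slot and $y$ elsewhere gives $|\Lambda_0(y;Z)| \le N^2\|Z\|_{U^{k-1}}$ uniformly in $y$, reducing the problem to the Gaussian estimate of $\EE\|Z\|_{U^{k-1}}$. I would expand
\[
\|Z\|_{U^{k-1}}^{2^{k-1}} = N^{-k}\sum_{x,h_1,\ldots,h_{k-1}}\prod_{\omega\in\{0,1\}^{k-1}} Z_{x+\omega\cdot h}
\]
and compute its expectation via Wick's theorem. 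Nonzero contributions come from pairings of the $2^{k-1}$ cube vertices, each imposing linear constraints on $h$ modulo $N$; the dominant ``coset pairings'' $\omega \leftrightarrow \omega \oplus v$ each impose a single constraint $v \cdot h \equiv 0$ with $N^{k-2}$ solutions and yield $\EE\|Z\|_{U^{k-1}}^{2^{k-1}} = \Theta(1/N)$. Jensen's inequality then provides the baseline bound on $\GW(T_k/N)$.

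To reach the sharper advertised exponents for $k = 4$ and general $k$, I would combine the above with a Cauchy--Schwarz scheme that peels off $y$-factors and reduces to a low-order Fourier bilinear/trilinear form in $Z$ that can be handled by maximum-of-Gaussians estimates on $\widehat Z$ as in the $k = 3$ case; the Boolean nature of $y$ is crucial so that the resulting cross terms retain AP structure. The main obstacles I anticipate are (i) the combinatorial enumeration of pairings in the Wick/Gowers moment computation, including subdominant terms and complications when $N$ is not prime, and (ii) identifying the correct Cauchy--Schwarz schedule for the $k \ge 4$ refinement so that the Fourier form obtained is controlled without losing the target exponent.
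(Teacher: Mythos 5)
The statement you are attacking is Conjecture~\ref{conj:gw-T_k}, which the paper leaves open; the authors only prove the much weaker bound $\GW(T_k/N) = O(N^{1-\frac{1}{2(k-1)}})$ (Theorem~\ref{thm:gw-bound}, via the Chinese remainder theorem covering argument of Proposition~\ref{prop:T_k-hull-size}), together with $\Theta(\sqrt{N\log N})$ for $k=3$ and $O(N^{3/4}(\log N)^{1/4})$ for $k=4$ by Fourier analysis. You should not present a ``proof'' of this conjecture: no proof is known.

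Moreover, the route you sketch cannot possibly deliver the conjectured exponent, and indeed gives something strictly worse than the paper's proved bound for every $k \ge 4$. The generalized von Neumann inequality gives $|\Lambda_0(y;Z)| \le N^2 \|Z\|_{U^{k-1}}$ uniformly over $\|y\|_\infty \le 1$, so your scheme bounds $\GW(T_k/N) \lesssim N\,\EE\|Z\|_{U^{k-1}}$. Your Wick computation (once tidied up: for $v$ with $|\mathrm{supp}(v)| \ge 2$ the relation $\omega \leftrightarrow \omega \oplus v$ does not yield a perfect matching under $\Z$-differences, and only the $k-1$ pairings by $v=e_i$ produce rank-one constraints) correctly gives $\EE\|Z\|_{U^{k-1}}^{2^{k-1}} = \Theta(1/N)$, whence by Jensen $\GW(T_k/N) \lesssim N^{1-1/2^{k-1}}$; this is $N^{7/8}$ for $k=4$, versus the paper's $N^{5/6}$ and the Bri\"et--Gopi improvement $N^{3/4}$. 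Crucially, this loss is not a Jensen artifact that a ``better Cauchy--Schwarz schedule'' can repair: a variance calculation shows $\|Z\|_{U^{k-1}}$ concentrates around $\Theta(N^{-1/2^{k-1}})$, so $N^2\|Z\|_{U^{k-1}}$ genuinely has this order for typical $Z$. The bottleneck is the von Neumann step itself, which discards all information about $y$ beyond $\|y\|_\infty \le 1$ and in particular ignores the low-complexity structure of $\{\nabla T_k(y)/N : y \in \{0,1\}^\Omega\}$ that the CRT argument exploits; further Cauchy--Schwarz on the $y$-factors will just re-derive von Neumann. The $k=3$ case succeeds only because $T_3$ is controlled by $\|\widehat Z\|_\infty \asymp \sqrt{(\log N)/N}$ rather than $\|Z\|_{U^2} \asymp N^{-1/4}$, and no analogous pointwise substitute for $\|Z\|_{U^{k-1}}$ is available for $k\ge 4$ without the inverse theorem, whose quantitative constants are far too weak to reach $\sqrt{N}(\log N)^{O(1)}$.
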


For the proof of Theorem~\ref{thm:gw-bound}, we go back to viewing $T_k$ as an operator on functions $\Omega \to \RR$ (as opposed to a function on points in $\RR^\Omega$). We define a multilinear version of $T_k$ by setting, for $f_0, \dots, f_{k-1} \colon \Omega \to \RR$,
\[
T_k(f_0, \dots, f_{k-1}) := \sum_{a,b} f_0(a)f_1(a+b) \cdots f_{k-1}(a+(k-1)b),
\]
so that for $f \colon \Omega \to \RR$,
\[
T_k(f) = T_k(\underbrace{f, f, \dots, f}_{k \text{ times}}).
\]
 
We identify points in $\CC^\Omega$ with functions $\Omega \to \CC$ and maintain the notation $\langle \ , \ \rangle$ for inner products, so that for $f,h \colon \Omega \to \CC$,
\[
\anglb{f,h} := \sum_{a \in \Omega} f(a)\ol{h(a)}.
\]
The gradient $\nabla T_k$ of $T_k$ maps a function $f \colon \Omega \to \RR$ to the function $\nabla T_k(f) \colon \Omega \to \RR$ defined by
\[
\nabla T_k(f)(a) =\sum_{b} \sum_{i=0}^{k-1} \prod_{\substack{0 \leq j \leq k-1 \\ j \neq i}} f(a + (j-i)b), \quad a \in \Omega.
\]
Since $T_k$ is multi-linear, for any $f,h \colon \Omega \to \RR$, 
\begin{equation}
	\label{eq:grad-decomp}
	\anglb{\nabla T_k(f),h}
	= \sum_{j =0}^{k-1} T_k(\underbrace{f,f,\dots,f}_{j \text{ times}}, h, \underbrace{f,f,\dots,f}_{k-1-j \text{ times}}).
\end{equation}

\subsection{3-APs and Fourier analysis} \label{sec:grad-3ap} 

Here, we prove the claim in Proposition~\ref{prop:T_k-hull-size} for $k=3$ using Fourier analysis. It will be easier to work in the setting $\Omega = \ZZ/N\ZZ$. The corresponding bounds for $\Omega = [N] \subset \ZZ$ can be easily derived by embedding $[N]$ in $\ZZ/N'\ZZ$ for some larger $N' \in [2N, 3N]$ so that $3$-APs in $[N] \subset \ZZ/N'\ZZ$ do not wrap-around zero in the cyclic group.

Given $f \colon \ZZ/N\ZZ \to \CC$, define its discrete Fourier transform by
\[
\wh f(r) = \frac{1}{N} \sum_{a \in \ZZ/N\ZZ} f(a) \omega^{-ar}, \quad r \in \ZZ/N\ZZ,
\]
where $\omega = e^{2\pi i/N}$. The inverse transform is given by
\[
 f(a) = \sum_{r \in \ZZ/N\ZZ} \wh f(r) \omega^{ar}. 
\]

The following standard identity relates $T_3$ with the Fourier transform.

\begin{lemma} \label{lem:3ap-fourier-id}
	For $f,g,h \colon \ZZ/N\ZZ \to \RR$,
	\[
	\frac{1}{N^2} T_3(f,g,h) = \sum_{r \in \ZZ/N\ZZ} \wh f(r) \wh g(-2r) \wh h(r).
	\]
\end{lemma}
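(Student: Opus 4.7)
The plan is to expand each of $f$, $g$, $h$ using Fourier inversion and then use orthogonality to collapse the resulting sums. Concretely, I would write $f(a) = \sum_{r_1} \wh f(r_1)\omega^{a r_1}$, $g(a+b) = \sum_{r_2} \wh g(r_2)\omega^{(a+b)r_2}$, and $h(a+2b) = \sum_{r_3} \wh h(r_3) \omega^{(a+2b)r_3}$, substitute into the definition $T_3(f,g,h) = \sum_{a,b} f(a)g(a+b)h(a+2b)$, and interchange the finite sums to obtain
\[
T_3(f,g,h) = \sum_{r_1,r_2,r_3} \wh f(r_1) \wh g(r_2) \wh h(r_3) \sum_{a,b \in \ZZ/N\ZZ} \omega^{a(r_1+r_2+r_3) + b(r_2+2r_3)}.
\]

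The next step is to apply the standard orthogonality relation $\sum_{x \in \ZZ/N\ZZ} \omega^{xm} = N \cdot \mathbf{1}[m \equiv 0 \pmod N]$ in each of the variables $a$ and $b$ separately. This forces the two linear constraints $r_1 + r_2 + r_3 \equiv 0$ and $r_2 + 2r_3 \equiv 0$, and contributes a factor of $N^2$. Solving the constraints yields $r_2 = -2r_3$ and hence $r_1 = r_3$, so setting $r := r_3$ reduces the triple sum to the single sum
\[
T_3(f,g,h) = N^2 \sum_{r \in \ZZ/N\ZZ} \wh f(r) \wh g(-2r) \wh h(r),
\]
which is the claimed identity after dividing by $N^2$.

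There is no real obstacle here: the argument is a routine orthogonality computation for the three-variable linear form $(a, a+b, a+2b)$, and the asymmetric normalization (the $\tfrac{1}{N}$ in front of $\wh f$ but not in the inversion formula) is precisely what produces the clean factor $N^{-2}$ on the left-hand side. If anything warrants care, it is merely the bookkeeping of the sign in $\wh g(-2r)$, which comes from the fact that the constraint $r_2 = -2r_3$ (rather than $+2r_3$) is the one consistent with $r_1 + r_2 + r_3 = 0$ and $r_1 = r_3$.
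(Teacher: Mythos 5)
Your proposal is correct and follows essentially the same route as the paper's proof: expand $f,g,h$ by Fourier inversion, interchange sums, and apply orthogonality in $a$ and $b$ to force the two linear constraints, which collapse the triple sum to a single sum over $r$. The only difference is notational (your $r_1,r_2,r_3$ versus the paper's $r,s,t$).
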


\begin{proof}
Expanding the left-hand side using the inverse transform, we have
\begin{align*}
T_3(f,g,h) 
&=
\sum_{a,b \in \ZZ/N\ZZ} f(a)g(a+b)h(a+2b)
\\
&= 
\sum_{a,b \in \ZZ/N\ZZ} \sum_{r,s,t \in \ZZ/N\ZZ} \wh f(r)\wh g(s) \wh h(t) \omega^{ar + (a+b)s + (a+2b)t}
\\
&= N^2 \sum_{r \in \ZZ/N\ZZ} \wh f(r) \wh g(-2r) \wh h(r),
\end{align*}
where the final step follows from noting that
\[
\sum_{a,b \in \ZZ/N\ZZ}\omega^{ar + (a+b)s + (a+2b)t}
=
\begin{cases}
N^2 & \text{if } r + s + t = 0 \text{ and } s+ 2t = 0, \\
0 & \text{otherwise.} 
\end{cases}
\]
\end{proof}

The above identity leads to the following bound, showing that $T_3$ is controled by the Fourier transform of its inputs.

\begin{lemma} \label{lem:3ap-fourier-bound}
	Let $f_0,f_1,f_2 \colon \ZZ/N\ZZ \to [-1,1]$. For each $i = 0, 1, 2$,
	\[
	\frac{1}{N^2} |T_3(f_0,f_1,f_2)| \lesssim \|\wh f_i\|_\infty =: \max_r |\wh f_i(r)|.
	\]
\end{lemma}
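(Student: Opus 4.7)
The plan is to deduce this bound directly from the Fourier identity of Lemma \ref{lem:3ap-fourier-id} by an $L^\infty$–$L^2$–$L^2$ Hölder split across the three Fourier factors, followed by Parseval.

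Starting from
\[
\frac{1}{N^2}T_3(f_0,f_1,f_2) = \sum_{r \in \ZZ/N\ZZ}\wh{f_0}(r)\wh{f_1}(-2r)\wh{f_2}(r),
\]
I would, for each choice of $i \in \{0,1,2\}$, pull the corresponding factor out in $\|\cdot\|_\infty$ and apply Cauchy--Schwarz to the remaining two. For instance, for $i=1$ this gives
\[
\left|\tfrac{1}{N^2}T_3(f_0,f_1,f_2)\right| \le \|\wh{f_1}\|_\infty \sum_r |\wh{f_0}(r)||\wh{f_2}(r)| \le \|\wh{f_1}\|_\infty\,\|\wh{f_0}\|_2\|\wh{f_2}\|_2,
\]
and for $i=0$ (symmetrically $i=2$),
\[
\left|\tfrac{1}{N^2}T_3(f_0,f_1,f_2)\right| \le \|\wh{f_0}\|_\infty \Bigl(\sum_r |\wh{f_1}(-2r)|^2\Bigr)^{1/2}\Bigl(\sum_r |\wh{f_2}(r)|^2\Bigr)^{1/2}.
\]

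To finish, I would invoke Parseval, which with the normalization $\wh{f}(r)=\frac{1}{N}\sum_a f(a)\omega^{-ar}$ reads $\sum_r |\wh{f}(r)|^2 = \frac{1}{N}\sum_a |f(a)|^2$, so the hypothesis $|f_j|\le 1$ gives $\|\wh{f_j}\|_2 \le 1$. In the $i=0$ case, the change of variable $r\mapsto -2r$ is the only subtlety: if $\gcd(2,N)=1$ this is a bijection on $\ZZ/N\ZZ$ and $\sum_r|\wh{f_1}(-2r)|^2 = \|\wh{f_1}\|_2^2\le 1$; otherwise each value is covered with multiplicity at most $2$, giving $\sum_r|\wh{f_1}(-2r)|^2 \le 2\|\wh{f_1}\|_2^2 \le 2$, which is absorbed into the implicit constant in $\lesssim$.

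There is no real obstacle here; the lemma is a standard orthogonality-plus-Hölder estimate, and the only bookkeeping is the factor-of-$2$ coming from the non-injectivity of $r\mapsto -2r$ in even moduli, which is harmless for an asymptotic $\lesssim$ bound.
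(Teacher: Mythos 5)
Your proof is correct and is essentially the same as the paper's: both start from the Fourier identity of Lemma~\ref{lem:3ap-fourier-id}, pull out the designated factor in $\|\cdot\|_\infty$, apply Cauchy--Schwarz to the remaining two, and finish with Parseval, with the same observation that $r\mapsto -2r$ has bounded multiplicity so the change of variable is harmless.
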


\begin{remark}
If $N$ is odd, then the $\lesssim$ can be replaced by $\le$.	
\end{remark}

\begin{proof}
	Using Lemma~\ref{lem:3ap-fourier-id} and the Cauchy--Schwarz inequality, we have
	\begin{align*}
		\frac{1}{N^2} |T_3(f_0,f_1,f_2)|
		&\le
		\|\wh f_0\|_\infty \sum_{r \in \ZZ/N\ZZ} |\wh f_1(-2r)||\wh f_2(r)|
		\\
		&\le 
		\|\wh f_0\|_\infty \bigl(\sum_r |\wh f_1(-2r)|^2\bigr)^{1/2} \bigl(\sum_r |\wh f_2(r)|^2 \bigr)^{1/2}
		\\
		&\lesssim
		\|\wh f_0\|_\infty \bigl(\sum_r |\wh f_1(r)|^2\bigr)^{1/2} \bigl(\sum_r |\wh f_2(r)|^2 \bigr)^{1/2}
		\\
		&\le \|\wh f_0\|_\infty.
	\end{align*}
	The final step follows from Parseval's identity: $\sum_r |\wh f_j(r)|^2 = \frac1N \sum_a |f_j(a)|^2 \le 1$. The proofs for $i = 1,2$ are analogous.
\end{proof}

\begin{proof}[Proof of Theorem~\ref{thm:gw-bound} for $k=3$]
	For any $g \colon \ZZ/N\ZZ \to \RR$, we have
	\[
	\anglb{\nabla T_3(f), g} = T_3(g,f,f) + T_3(f,g,f) + T_3(f,f,g),
	\]
	so by Lemma~\ref{lem:3ap-fourier-bound},
	\[
	\frac{1}{N} |\anglb{\nabla T_3(f), g}| \lesssim N \max_r |\wh g(r)|.
	\] 
	Now let $g$ be a random function taking i.i.d.~standard normal values. Then each $\wh g(r)$ is a normally distributed complex number with $\EE [|\wh g(r)|^2] = 1/N$, since the Fourier transform is a unitary operator. Standard results about the supremum of Gaussian processes, e.g., Lemma~\ref{lem:small-set-small-gw} below, then gives $\EE \sup_r |\wh g(r)| \lesssim \sqrt{(\log N)/N}$. Thus
	\[
	\GW(T_3/N) = \EE \sup_{f \colon \ZZ/N\ZZ \to \{0,1\}} \frac{1}{N} \anglb{\nabla T_3(f), g}
	\lesssim N \EE \sup_r |\wh g(r)| \lesssim \sqrt{N \log N}.
	\]
	The matching lower bound is proved in Appendix~\ref{app:gw}. 
\end{proof}

See Appendix~\ref{app:gw} for proof of the bound $\GW(T_4/N) = O( N^{3/4} (\log N)^{1/4})$ in Theorem~\ref{thm:gw-bound}, which extends the above Fourier analytic technique.

\subsection{$k$-APs and the Chinese remainder theorem}

Our strategy for proving Theorem~\ref{thm:gw-bound} is to show that the set of gradients $\nabla T_k(f) /N$ over all $f \colon \Omega \to \{0,1\}$ is contained in the convex hull of a small set of bounded functions. We start with a standard bound on Gaussian width of sets. The proof is included for completeness.

\begin{lemma}[Small sets have small Gaussian widths] \label{lem:small-set-small-gw}
If $S \subset [-1,1]^N$, then 
\[ 
\GW(S) = O(\sqrt{N \log |S|}).
\]
\end{lemma}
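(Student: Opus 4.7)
The plan is to apply the classical moment-generating-function (Chernoff) bound for the expected supremum of finitely many sub-Gaussian random variables, using the fact that membership in $[-1,1]^N$ automatically yields a variance bound of $N$.

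First, I would observe that for every $x \in S \subseteq [-1,1]^N$, the random variable $\langle x, Z\rangle$ with $Z \sim \Normal(0, I_N)$ is a centered Gaussian with variance $\|x\|_2^2 \le N$, so
\[
\EE \exp\bigl(\lambda \langle x, Z\rangle\bigr) \le \exp(\lambda^2 N/2) \quad \text{for every } \lambda \in \RR.
\]
This is the only place the hypothesis $S \subseteq [-1,1]^N$ enters.

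Next, for any $\lambda > 0$, I would combine Jensen's inequality applied to the convex function $t \mapsto e^{\lambda t}$ with the trivial bound $\sup_{x \in S} a_x \le \sum_{x\in S} a_x$ for nonnegative $a_x$:
\[
\exp\bigl(\lambda\, \GW(S)\bigr)
\le \EE \exp\bigl(\lambda \sup_{x \in S} \langle x, Z\rangle\bigr)
\le \sum_{x \in S} \EE \exp(\lambda \langle x, Z\rangle)
\le |S|\, e^{\lambda^2 N/2}.
\]
Taking logarithms yields $\GW(S) \le \log|S|/\lambda + \lambda N/2$, and the optimal choice $\lambda = \sqrt{2\log|S|/N}$ (valid for $|S| \ge 2$) gives $\GW(S) \le \sqrt{2N\log|S|}$, as desired.

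Since the argument is a textbook consequence of sub-Gaussian concentration, I do not anticipate any genuine obstacle; the only minor care is for the degenerate cases $|S|\in\{0,1\}$, where $\GW(S) \le 0$ trivially and these can be absorbed into the implicit $O(\cdot)$ constant.
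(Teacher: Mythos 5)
Your proof is correct, and it takes a genuinely different (though equally classical) route from the paper's. The paper bounds the tail probability $\PP\bigl(\sup_{x\in S}\langle x,Z\rangle > t\bigr)\le |S|e^{-t^2/(2N)}$ by a union bound and then integrates this tail from a suitable threshold $u \asymp \sqrt{N\log|S|}$ upward, using the layer-cake formula $\EE Y \le u + \int_u^\infty \PP(Y>t)\,dt$. You instead use the moment-generating-function (``soft-max'') argument: exponentiate, pass the supremum to a sum via Jensen and nonnegativity, and optimize over $\lambda$. Both are standard ways to bound the expected maximum of finitely many sub-Gaussian variables. The MGF route has the minor advantage of producing the sharp explicit constant $\sqrt{2}$ in $\GW(S)\le\sqrt{2N\log|S|}$ with no integral estimates; the tail-integration route is slightly more flexible, applying verbatim when one has only a tail bound rather than a full MGF bound, but here it yields a larger (and unneeded) constant. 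One small caveat: your Jensen step uses that $\GW(S) = \EE\sup_{x\in S}\langle x,Z\rangle$ is finite (clear here since $|S|<\infty$ and each $\langle x,Z\rangle$ is integrable), and your optimization $\lambda=\sqrt{2\log|S|/N}$ requires $|S|\ge2$, which you correctly flag; the degenerate cases fold into the $O(\cdot)$ as you say.
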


\begin{proof}
We may assume that $|S| \ge 2$.
We have the following standard tail bound for $Z \sim \Normal(0,\sigma^2)$: $\PP(Z > t) \le e^{-t^2/(2\sigma^2)}$. With $Z \sim \Normal(0,I_N)$ a standard Gaussian vector in $\RR^N$, one has $\anglb{y,Z} \sim \Normal(0,|y|^2)$ for every $y \in \RR^N$. Thus $\PP(\anglb{y,Z} > t) \le e^{-t^2/(2|y|^2)} \le e^{-t^2/(2N)}$ for every $y \in [-1,1]^N$. So for any $t > 0$, we have by the union bound
\[
\PP(\sup_{y \in S} \anglb{y,Z} > t) \le |S| e^{-t^2/(2N)}.
\]
Set $u = 10\sqrt{N \log |S|}$. Then
\begin{align*}
\GW(S) = \EE_Z \sup_{y \in S} \anglb{y,Z} 
&\le u + \int_u^\infty \PP(\sup_{y \in S} \anglb{y,Z} > t) \, dt 
\\
&\le u + \int_u^\infty |S| e^{-t^2/(2N)} \, dt
= O(\sqrt{N \log |S|}). \qedhere
\end{align*}
\end{proof}

\begin{corollary} \label{cor:T_k-hull}
If $\{\nabla T_k(f) /N : f \colon \Omega \to \{0,1\}\}$ is contained in the convex hull of a set $S$ of uniformly bounded functions\footnote{Here, and in the sequel, uniformly bounded functions refer to those functions $f$ with $|f(a)| = O(1)$ point-wise.} on $\Omega$, then $\GW(T_k/N) = O(\sqrt{N \log |S|})$.
\end{corollary}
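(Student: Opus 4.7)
The plan is to chain together three elementary facts about Gaussian width: monotonicity under set inclusion, invariance under taking convex hulls, and homogeneity under scaling. First I would note that for any set $K \subset \RR^N$ its Gaussian width equals that of its convex hull, since for each fixed realization of the Gaussian vector $Z$ the linear functional $x \mapsto \anglb{x,Z}$ attains its supremum over $\operatorname{conv}(K)$ at an extreme point, which may be taken to lie in $K$. Combined with the obvious monotonicity $\GW(A) \le \GW(B)$ whenever $A \subset B$, the hypothesis that $\{\nabla T_k(f)/N : f \colon \Omega \to \{0,1\}\} \subset \operatorname{conv}(S)$ yields
\[
\GW\bigl(\{\nabla T_k(f)/N : f \colon \Omega \to \{0,1\}\}\bigr) \le \GW(\operatorname{conv}(S)) = \GW(S).
\]

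Next, ``uniformly bounded'' means there is a constant $C = O(1)$ with $|s(a)| \le C$ for every $s \in S$ and every $a \in \Omega$, so $S \subset [-C,C]^\Omega = C \cdot [-1,1]^\Omega$. Since $\GW$ is positively homogeneous, $\GW(S) \le C \cdot \GW(S/C)$, and Lemma~\ref{lem:small-set-small-gw} applied to the rescaled set (which sits inside $[-1,1]^N$ and has the same cardinality) gives $\GW(S/C) = O(\sqrt{N \log |S|})$. Therefore $\GW(S) = O(\sqrt{N \log |S|})$ as well.

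Finally, recall from \eqref{eq:GW-F} that $\GW(T_k/N)$ is defined as the Gaussian width of $\{\nabla T_k(f)/N : f \in \{0,1\}^\Omega\} \cup \{0\}$; adjoining the origin to a set can only increase the supremum by at most $0$ in absolute contribution (and in any case does not affect the asymptotic bound), so the claim $\GW(T_k/N) = O(\sqrt{N \log |S|})$ follows. There is no serious obstacle here; the corollary is essentially a bookkeeping consequence of Lemma~\ref{lem:small-set-small-gw} together with the standard extreme-point argument for convex hulls. The substantive work — producing an actual small cover $S$ by convex hulls of bounded functions — is precisely what the remainder of Section~\ref{sec:gw} will carry out in order to deduce Theorem~\ref{thm:gw-bound}.
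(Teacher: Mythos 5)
Your proof is correct and is exactly the routine deduction the paper has in mind; the paper does not even write out a proof of this corollary, taking it as an immediate consequence of Lemma~\ref{lem:small-set-small-gw} via the standard facts that Gaussian width is unchanged by taking convex hulls, is monotone under inclusion, and scales linearly. (The aside about the origin is superfluous here, since the paper already notes $\nabla T_k(0) = 0$ so the origin lies in the set of gradients.)
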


We will prove the following bounds on the size of $S$, which imply the bound $\GW(T_k/N) = O(N^{1-\frac{1}{2(k-1)}})$ in Theorem~\ref{thm:gw-bound}.

\begin{proposition} \label{prop:T_k-hull-size}
	Fix $k \ge 3$. The set 
	$\{\nabla T_k(f) /N : f \colon \Omega \to \{0,1\}\}$ is contained in the convex hull of a set $S$ of uniformly bounded functions on $\Omega$, where $\abs{S} = \exp(O(N^{1-1/(k-1)}))$.
\end{proposition}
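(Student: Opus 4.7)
The plan is a residue-class decomposition inspired by the Chinese Remainder Theorem. By \eqref{eq:grad-decomp} and the fact that the Minkowski sum of $k$ sets each contained in the convex hull of a size-$\exp(O(N^{1-1/(k-1)}))$ collection of uniformly bounded functions is again contained in the convex hull of a set of the same form, it suffices to handle, for fixed distinct nonzero integers $d_1, \ldots, d_{k-1}$, the functions
\[
H(f)(a) := \frac{1}{N} \sum_b \prod_{m=1}^{k-1} f(a+d_m b), \qquad f \in \{0,1\}^\Omega.
\]

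Fix a prime $q \asymp N^{1/(k-1)}$ coprime to each $d_m$. When $\Omega = \ZZ/N\ZZ$, assume $q \mid N$ (else slightly adjust); when $\Omega = [N]$, embed $[N]$ into $\ZZ/N'\ZZ$ with $N'$ a multiple of $q$ slightly larger than $kN$ so that no $k$-AP wraps around. Partition $\Omega$ into $q$ residue classes modulo $q$, and decompose $f = \sum_r f^{(r)}$ accordingly. Expanding yields
\[
H(f) = \sum_{(\rho_a,\rho_b) \in [q]^2} H_{\rho_a,\rho_b}(f),
\]
where $H_{\rho_a,\rho_b}(f)(a)$ is the contribution from $a \equiv \rho_a$ and $b \equiv \rho_b \pmod q$. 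The key structural point: since $q$ is coprime to every $d_m$, the residue $(a+d_m b) \bmod q$ is determined by $(\rho_a,\rho_b)$, so $H_{\rho_a,\rho_b}(f)$ depends on $f$ restricted to only $k-1$ specific residue classes, hence on at most $(k-1)N/q = O(N^{1-1/(k-1)})$ binary values. Moreover each $\|H_{\rho_a,\rho_b}(f)\|_\infty \le 1/q$ and the support of $H_{\rho_a,\rho_b}(f)$ lies in $\{a \equiv \rho_a \pmod q\}$.

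Building $S$: include all zero-padded functions $q \cdot H_{\rho_a,\rho_b}(g)\mathbf{1}_{\{a\equiv \rho_a\}}$ as $g$ ranges over $\{0,1\}^\Omega$ and $(\rho_a,\rho_b)$ over $[q]^2$. Each element lies in $[0,1]^\Omega$ (uniformly bounded), and $|S| \le q^2 \cdot 2^{(k-1)N/q} = \exp(O(N^{1-1/(k-1)}))$. To recover $H(f)$ as a convex combination, observe that at each $a$ exactly $q$ of the $q^2$ pieces are nonzero (those with $\rho_a$ matching $a \bmod q$, and $\rho_b$ ranging over $[q]$), and their sum equals $H(f)(a)$. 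Grouping by $\rho_b$ yields $q$ bundle functions whose average (with uniform weights $1/q$) equals $H(f)$, and each bundle is built out of elements of $S$ using disjointness of supports across different $\rho_a$'s.

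The main obstacle lies in the final bookkeeping. Naively, each $\rho_b$-bundle as a global function on $\Omega$ depends on $f$ across all residue classes (because varying $\rho_a$ cycles the relevant cosets through every residue), which would inflate $|S|$ to $2^N$. The remedy is to realize the convex combination at the level of per-coset restrictions: each element of $S$ is localized to one residue class and depends on $O(N/q)$ coordinates of $f$, while the convex-combination assembly uses the disjointness of supports across cosets to reconstruct the full $H(f)$ without duplicating dependencies. Making this scheme respect simultaneously (i) $|S| = \exp(O(N^{1-1/(k-1)}))$, (ii) uniform $O(1)$ boundedness of the elements of $S$, and (iii) a genuine convex (weights summing to one) combination is the delicate technical heart of the argument.
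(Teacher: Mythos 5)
Your single-prime residue decomposition hits a structural wall that you correctly sense but do not resolve, and I believe it cannot be resolved within your framework. Here is the quantitative obstruction. You partition by $(\rho_a,\rho_b)=(a\bmod q, b\bmod q)$ for one prime $q\asymp N^{1/(k-1)}$, giving $q^2\asymp N^{2/(k-1)}$ pieces $H_{\rho_a,\rho_b}(f)$, each of sup-norm $\asymp 1/q$ (since the sum over $b\equiv\rho_b\pmod q$ still has $\asymp N/q$ terms). At any fixed $a$, exactly $q$ pieces are nonzero, each contributing $O(1/q)$, so the pointwise mass is $O(1)$ as it should be. But to express $H(f)=\sum_i\lambda_i s_i$ as a genuine convex combination ($\lambda_i\ge 0$, $\sum\lambda_i=1$) of $O(1)$-bounded functions, the scaled pieces $q\cdot H_{\rho_a,\rho_b}(f)$ each need weight $1/q$, which over $q^2$ pieces gives total weight $q=N^{1/(k-1)}\gg 1$; equivalently, scaling by $q^2$ to fix the weights destroys the uniform bound. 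Grouping by $\rho_b$ into bundles repairs the weight count but, as you yourself observe, destroys the bit-count: each bundle depends on $f$ on all residues, so the resulting $S$ has size $2^N$. Neither "disjointness of supports" nor per-coset bookkeeping escapes this: a sum of disjointly-supported vectors is not in general in their convex hull (e.g., $e_1+\dots+e_q\notin\mathrm{conv}\{e_1,\dots,e_q\}$).

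The paper avoids this by decomposing with $k-1$ \emph{distinct} primes $q_1,\dots,q_{k-1}\asymp N^{1/(k-1)}$, one per factor $f(a+ib)$ of the product. This produces $q_1\cdots q_{k-1}\asymp N$ residue tuples $(r_1,\dots,r_{k-1})$, so the weight $1/(q_1\cdots q_{k-1})\asymp 1/N$ does sum to one; crucially, the Chinese Remainder Theorem pins $b$ to $O(1)$ choices for each fixed $a$ and residue tuple, which is what makes the scaled vertex functions $v_{r_1,\dots,r_{k-1};f}$ uniformly $O(1)$-bounded. Each vertex depends only on $f$ restricted to one residue class mod each $q_i$, hence on $O((k-1)N^{1-1/(k-1)})$ bits. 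Your single prime gives the bit-count savings but not the convex weights; the $(k-1)$-prime CRT decomposition gives both simultaneously, and that is the essential idea you are missing.
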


We can conjecture that the bound on $\abs{S}$ can be improved to $e^{N^{o(1)}}$, or perhaps even the following stronger bound, which would imply Conjecture~\ref{conj:gw-T_k}.

\begin{conjecture}
	In Proposition~\ref{prop:T_k-hull-size}, for every fixed $k \ge 4$, one can have $\abs{S} = e^{(\log N)^{O(1)}}$.
\end{conjecture}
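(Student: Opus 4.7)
The natural approach is to extend the $k=3$ Fourier-analytic proof of Proposition~\ref{prop:T_k-hull-size} (see Section~\ref{sec:grad-3ap}) to higher $k$ via \emph{higher-order} Fourier analysis. For $k=3$, bounded functions $f$ are effectively described by their Fourier transform $\widehat f$, and a coarse discretization of $\widehat f$ at resolution $1/N$ already yields a covering of size $e^{O(\log N)} = N^{O(1)}$, which is even stronger than the target quasi-polynomial bound. For $k \ge 4$, the correct replacement of Fourier characters are degree-$(k-2)$ nilsequences, and the plan is to mirror the Fourier argument through the Green--Tao arithmetic regularity machinery.

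Concretely, the plan is to apply the arithmetic regularity lemma to decompose any Boolean $f : \Omega \to \{0,1\}$ as
\[
f = f_{\mathrm{nil}} + f_{\mathrm{err}} + f_{\mathrm{unif}},
\]
where $f_{\mathrm{nil}}$ is a degree-$(k-2)$ nilsequence of complexity bounded by some function $M = M(\epsilon)$, $\|f_{\mathrm{err}}\|_2 \le \epsilon$, and $\|f_{\mathrm{unif}}\|_{U^{k-1}} \le \epsilon$. By multilinearity of $T_k$, the discrepancy $\nabla T_k(f) - \nabla T_k(f_{\mathrm{nil}})$ expands into $O(1)$ many $(k-1)$-linear terms, each containing at least one factor of $f_{\mathrm{err}}$ or $f_{\mathrm{unif}}$. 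One then bounds these \emph{pointwise}: the $f_{\mathrm{err}}$ contributions are $O(\sqrt{N}\|f_{\mathrm{err}}\|_2) = O(\epsilon \sqrt{N})$ by Cauchy--Schwarz in the $b$-summation, while the $f_{\mathrm{unif}}$ contributions should be controlled by a pointwise generalized von Neumann inequality in terms of $\|f_{\mathrm{unif}}\|_{U^{k-1}}$. Setting $\epsilon = (\log N)^{-O(1)}$ then yields $\| (\nabla T_k(f) - \nabla T_k(f_{\mathrm{nil}}))/N \|_\infty = o(1)$.

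For the counting step, one discretizes the parameters defining a degree-$(k-2)$ nilsequence of complexity at most $M$ at resolution $1/N$, producing $N^{O(M)} = e^{O(M \log N)}$ candidates. The resulting set of gradients $\{\nabla T_k(f_{\mathrm{nil}})/N\}$, together with a small $\ell^\infty$-net around it, serves as the target family $S$ of uniformly bounded functions. If $M(\epsilon)$ can be arranged to grow only polylogarithmically in $1/\epsilon$, then with $\epsilon = (\log N)^{-O(1)}$ we would obtain $|S| = e^{(\log N)^{O(1)}}$, as required; Lemma~\ref{lem:small-set-small-gw} then delivers the Gaussian-width bound $\GW(T_k/N) = \sqrt{N}(\log N)^{O(1)}$ of Conjecture~\ref{conj:gw-T_k}.

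The decisive obstacle is the quantitative inverse theorem for the $U^{k-1}$ norm. For $k-1 \ge 3$, the best known bounds (Green--Tao--Ziegler, and Manners in the $U^4$ case) give at best $M(\epsilon) \le \exp(\epsilon^{-O(1)})$, far worse than the $M(\epsilon) = (\log(1/\epsilon))^{O(1)}$ growth the plan demands, and improving this would itself be a major advance in higher-order Fourier analysis. A possible detour is to avoid invoking a full pointwise structure theorem for $f$ and instead directly decompose the $(k-1)$-linear operator $f \mapsto \nabla T_k(f)$ as a short sum of ``nil-convolution'' operators whose images vary over low-complexity families; such a decomposition is only required for the particular multilinear form $T_k$, and may be provable by an inductive Hahn-decomposition / energy-increment argument which exploits the symmetry of $k$-APs but avoids the worst-case losses inherent to a general inverse theorem.
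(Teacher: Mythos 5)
There is a genuine gap: this statement is an open conjecture in the paper, not a theorem, and your proposal does not close it --- it reproduces, in more detail, exactly the heuristic that the remark following the conjecture already sketches (replace Fourier characters by nilsequences/higher-order Fourier analysis) together with the same reason it cannot currently be executed. The decisive step in your plan is a regularity/inverse statement for the $U^{k-1}$ norm with complexity $M(\epsilon)$ growing only polylogarithmically in $1/\epsilon$; no such bound is known (the known bounds are exponential-type), and you acknowledge this yourself. A plan whose central quantitative ingredient is an unproven major open problem, with only a vague ``possible detour'' offered in its place, is not a proof of the conjecture.

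Beyond that, two concrete steps in your outline would fail even if strong inverse bounds were granted. First, Proposition~\ref{prop:T_k-hull-size} and the conjecture ask for \emph{exact} containment of the gradients in the convex hull of a small set of uniformly bounded functions; your argument only produces an $\ell^\infty$-approximation of $\nabla T_k(f)/N$ by $\nabla T_k(f_{\mathrm{nil}})/N$ up to error $\epsilon$, and the leftover error box $[-\epsilon,\epsilon]^N$ is not the convex hull of few points. Even measured by Gaussian width, with your choice $\epsilon=(\log N)^{-O(1)}$ this residual contributes width of order $\epsilon N = N/(\log N)^{O(1)}$, which swamps the target $\sqrt{N}(\log N)^{O(1)}$ of Conjecture~\ref{conj:gw-T_k}; one would need $\epsilon \lesssim N^{-1/2}$, making the quantitative demands on the inverse theorem even more severe. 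Second, your claim that the $f_{\mathrm{unif}}$ contributions are bounded \emph{pointwise} by a ``generalized von Neumann inequality'' is unjustified: that inequality controls the full bilinear average over $(a,b)$, i.e.\ $T_k$ itself, whereas $\nabla T_k(f)(a)$ is, for each fixed $a$, a one-parameter sum over $b$, and Gowers uniformity does not control such fiberwise sums coordinate by coordinate (compare the $k=3$ case, where the paper's Lemma~\ref{lem:3ap-fourier-bound} is applied to the inner product $\langle \nabla T_3(f), g\rangle$, not to individual coordinates of the gradient). The paper itself offers no proof of this conjecture --- its actual unconditional bound, Proposition~\ref{prop:T_k-hull-size} with $|S| = \exp(O(N^{1-1/(k-1)}))$, is obtained by an entirely different, elementary Chinese-remainder sampling argument --- so your write-up should be presented as a discussion of a possible (currently blocked) approach, not as a proof.
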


\begin{remark}
For $k = 3$, one can have $\abs{S} = O(N)$ by considering the Fourier basis and using Lemma~\ref{lem:3ap-fourier-bound}. For the first open case $k=4$, intuition from the theory of \emph{higher-order Fourier analysis} (e.g., \cite{TaoHOFA}) suggests that perhaps it suffices to take the set of ``quadratic characters'', i.e., functions of the form $a \mapsto e^{2\pi i q(a)}$ where $q(a)$ behaves quadratically (very loosely speaking). More generally, perhaps we can take a set of \emph{nilsequences}~\cite{GT-nilsequence}. However, this method is currently incapable of proving the conjecture due to poor quantitative dependencies in the theory of higher order Fourier analysis. See~\cite[Section 4.1]{FRA} for the analogous problem in the language of ergodic averages.
\end{remark}

For the proof of Proposition~\ref{prop:T_k-hull-size}, it will be easier to work in the setting $\Omega = [N]\subset \ZZ$. The proof can be easily adapted to work for $\Omega = \ZZ/N\ZZ$ by chopping $\ZZ/N\ZZ$ into a bounded number of intervals and analyzing their contributions separately. 

The argument we present here is essentially the same as that appearing in~\cite[Section 4]{FLW}, which in turn is motivated by the random sampling idea in~\cite{CG16}. Roughly speaking, we can obtain a good estimate of $\nabla T_k(f)$ by knowing $f$ only on $A \subset [N]$, where $A$ is a random subset of slightly more than $N^{1-1/(k-1)}$ elements. Thus any gradient $\nabla T_k(f)$ can be well approximated by one of $2^{\abs{A}}$ many possible functions.\footnote{This random sampling argument can also be used in conjunction with Chatterjee and Dembo's results~\cite{CD} to obtain a large deviation principle for $k$-APs, allowing the probability $p$ to decay as $N^{-c_k}$ for some constant $c_k > 0$, though with a smaller $c_k$ than what is shown here.} Actually one can achieve a similar effect (more simply and with better bounds) deterministically by partitioning $[N]$ modulo $k-1$ distinct primes of sizes roughly $N^{1/(k-1)}$ each, which is how we shall proceed. In this approach, the Chinese remainder theorem play a role similar to independence for random variables.

\begin{proof}[Proof of Proposition~\ref{prop:T_k-hull-size}]
We may assume that $N$ is large.
To analyze $\nabla T_k(f)$, we make the following definition. For $c_1,\dots,c_{k-1} \in \ZZ$, let $\Phi(c_1,\dots,c_{k-1})$ be the set of all functions $F \colon [N] \to \RR$ of the form
\begin{equation}\label{eq:za} 
F(a) =  \frac{1}{N} \sum_{b \in \ZZ} f(a+c_1b) f(a+c_2b) \dots f(a+c_{k-1}b), \quad a \in [N]
\end{equation}
for some $f \colon [N] \to \{0,1\}$ (set $f(a) = 0$ if $a \notin [N]$). It suffices to show that $\Phi(c_1,\dots,c_{k-1})$ is contained in the convex hull of $\exp(O(N^{1-1/(k-1)}))$ many bounded functions, whenever $\{c_1,\dots,c_{k-1}\}$ is any of
\[ \{1,2,\dots,k-1\}, \{-1,1,\dots,k-2\},\dots,\{-(k-1),\dots,-2,-1\}. \]
For clarity, we assume that $(c_1,\dots, c_k) = (1, \dots, k-1)$, as the other cases are similar. 

Pick $k-1$ distinct primes $q_1,\dots,q_{k-1} \in [(N/2)^{\frac{1}{k-1}}, N^{\frac{1}{k-1}}]$ (such a choice always exist when $N$ is large due to bounds on large gaps between primes~\cite{BHP01}). We have $q_1q_2 \cdots q_{k-1} \in [N/2,N]$. Write
\begin{equation} \label{eq:parition-f-mod}
f = \sum_{r_i \in \ZZ/q_i\ZZ} f_{r_i + q_i\ZZ}
\end{equation}
where
\[ 
   f_{r_i + q_i\ZZ}(a) = 
   \begin{cases} 
      f(a) & \text{if }a \equiv r_i\pmod{q_i}, \\ 
      0 & \text{if }a \not\equiv r_i\pmod{q_i}, 
   \end{cases} 
\]
is the restriction of $f$ to the residue class $r_i \pmod{q_i}$. By using  \eqref{eq:parition-f-mod} to partition the $i$-th factor $f(a+ib)$ in each term in \eqref{eq:za}, we obtain
\[	
 F(a) = \frac{1}{N} \sum_{b \in \ZZ} \prod_{i=1}^{k-1} \sum_{r_i \in \ZZ/q_i\ZZ} f_{r_i + q_i\ZZ}(a+ i b)
 = \frac{1}{q_1\cdots q_k} \sum_{r_1, \dots, r_{k-1}} v_{r_1, \dots, r_{k-1}; f}(a)
\]
where the sum is taken over $r_1 \in \ZZ/q_1\ZZ, \dots, r_{k-1} \in \ZZ /q_{k-1}\ZZ$, and 
\begin{equation}\label{eq:vertices} 
v_{r_1,\dots,r_{k-1};f}(a) := \frac{q_1q_2\cdots q_{k-1}}{N} \sum_{b \in \ZZ} \prod_{i=1}^{k-1} f_{r_i+ q_i\ZZ}(a+ib).
\end{equation}
We see that $F$ lies in the convex hull of 
\[ V = \{v_{r_1,\dots,r_{k-1};f} : \text{$f$ is $\{0,1\}$-valued}, r_1 \in \ZZ/q_1\ZZ, \dots, r_{k-1} \in \ZZ/q_{k-1}\ZZ\}. \]
For fixed $i$ and $r_i$, the number of possibilities for $f_{r_i + q_i\ZZ}$ is at most $2^{N/q_i+1}$ as $f$ ranges over $\{0,1\}$-valued functions. 
It follows that 
\[ |V| \leq q_1q_2 \cdots q_{k-1} \prod_{i=1}^{k-1} 2^{N/q_i+1} = 2^{O(N^{1-1/(k-1)})}, \]
by our choice of $q_i$. It remains to show that all functions in $V$ are bounded. Indeed, for any fixed $a \in [N]$, the simultaneous congruence conditions $a+ib \equiv r_i\pmod{q_i}$, for $1 \leq i \leq k-1$, in the unknown $b \in [-N,N]$ have $O(N/(q_1q_2\cdots q_{k-1})+1)$ solutions by the Chinese Remainder Theorem. Thus the number of nonzero summands in~\eqref{eq:vertices} is $O(N/(q_1q_2\cdots q_{k-1}) + 1)$, and it follows that
\[ |v_{r_1,\dots,r_{k-1};f}(a)| \lesssim 1 + \frac{q_1q_2\cdots q_{k-1}}{N} \lesssim 1, \]
as desired. This completes the proof.
\end{proof}

This completes the proof of Theorem~\ref{thm:gw-bound}, which bounds $\GW(T_k/N)$, other than the matching lower bound $\GW(T_3/N) \gtrsim \sqrt{N\log N}$ and the improvement $\GW(T_4/N) \lesssim N^{3/4}(\log N)^{1/4}$, whose proofs can be found in Appendix~\ref{app:gw}.

\section{Variational problem at the macroscopic scale}
\label{sec:macro}

The goal of this section is to prove Proposition~\ref{ppn:rate-extremal}, which reduces the entropic variational problem in the macroscopic scale, i.e., $\delta^{-3}p^{k-2}(\log(1/p))^2 \rightarrow 0$, to a corresponding extremal problem in additive combinatorics:
\begin{equation}
 \label{eq:delta-large}
	\phi_p^{(k,\Omega)}(\delta)
	=  (1+o(1)) \log(1/p) \cdot \min_{S \subset \Omega} \Big\{ |S|: T_k(S) \ge \delta p^k T_k(\Omega)\Big\}
\end{equation}
(provided that $\delta = O(1)$ and $\delta p^kN^2 \to \infty$). Let us provide an overview of the proof strategy.
The upper bound on $\phi_p^{(k,\Omega)}(\delta)$ follows by considering a function which takes value 1 on an interval and $p$ elsewhere (Section \ref{sec:pfupper}). For the lower bound, suppose $T_k(f) \ge (1+\delta) p^k T_k(\Omega)$. Write $f(a) = p + g(a)$. Let $g'$ denote a function obtained from $g$ by changing each $g(a)$ to $0$ if $g(a)$ is already sufficiently close to zero (the exact threshold will be specified in the proof). It will be shown that $T_k(f) \approx p^k T_k(\Omega) + T_k(g')$, so that $T_k(g') \ge (1-o(1)) \delta p^k T_k(\Omega)$. For now, it is fine to pretend that $g'$ is an indicator function of a set, so that we have a lower bound on the number of $k$-APs of the set. We will prove an extremal result on maximizing the number of $k$-APs of a set of given size. This will imply a lower bound on $\sum_{a \in \Omega} g'(a)$, thereby giving the desired lower bound on $\sum_a I_p(f(a)) \approx \sum_a g'(a) \log(1/p)$. 

\subsection{Proof of upper bound in Proposition~\ref{ppn:rate-extremal}}\label{sec:pfupper} We begin by noting that the right-hand side expression of \eqref{eq:delta-large} does not change if $\delta$ is replaced by some $\delta' = \delta + o(\delta)$. This is equivalent to the fact that the maximum number of $k$-APs in a subset of $\Omega$ of size $n$ does not change significantly if $n$ is changed to $n+o(n)$. This is clear from the exact formula in Theorem~\ref{thm:maxAP} and \eqref{eq:Tk(n)} whenever the hypothesis of the theorem applies, or otherwise in general from the easy lemma below (applied with $n \to \infty$, $n \le N$, and $s = o(n)$, so that $M_n \gtrsim n^2$ and $M_{n + o(n)} \sim M_n$).

\begin{lemma}\label{lem:max-kap-stability}
	Let $k\ge 3$ and $\Omega = \Z/N\Z$ or $[N]$. Let $M_n = \max_{A\subset \Omega: |A| \le n} T_k(A)$. Then
	\[
	M_n \le M_{n+s} \le M_n + ks(n+s)
	\] for all $n,s \ge 0$.
\end{lemma}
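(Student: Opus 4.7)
The inequality $M_n \le M_{n+s}$ is immediate, since any set witnessing $M_n$ also lies in the larger optimization class. So the plan is to focus on the upper bound $M_{n+s} \le M_n + ks(n+s)$, which says that shrinking the size budget by $s$ costs at most $ks(n+s)$ in the $k$-AP count.

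I would begin with a pointwise bound: for any finite $B \subset \Omega$ and any $x \in B$, the number of pairs $(a,b)$ counted by $T_k(B)$ with $x \in \{a, a+b, \dots, a+(k-1)b\}$ is at most $k|B|$. To see this, fix the position $i \in \{0,1,\dots,k-1\}$ at which $x$ sits in the AP, so $a = x - ib$. Using one of the neighboring constraints (for instance $x+b \in B$ if $i < k-1$, or $x-b \in B$ otherwise), the difference $b$ is forced to lie in a translate of $B$, giving at most $|B|$ choices; summing over the $k$ positions yields the bound.

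Given this, fix a set $B \subset \Omega$ with $|B| \le n+s$ and $T_k(B) = M_{n+s}$. If $|B| \le n$ there is nothing to prove, so assume $|B| > n$ and delete any $|B|-n \le s$ of its elements to produce $A$ with $|A|=n$. By the pointwise bound, each deletion destroys at most $k|B| \le k(n+s)$ counted pairs, so
\[
T_k(A) \;\ge\; T_k(B) - (|B|-n)\cdot k|B| \;\ge\; M_{n+s} - ks(n+s).
\]
Since $T_k(A) \le M_n$, rearranging gives the claim. The only subtle point is the per-element bound $k|B|$; everything else is bookkeeping, and the argument is identical in both ambient groups $\mathbb{Z}$ and $\mathbb{Z}/N\mathbb{Z}$ since the counting convention for $T_k$ is the same.
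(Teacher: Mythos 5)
Your proof is correct and essentially mirrors the paper's argument: the paper bounds the gain in $T_k$ from \emph{adding} one element ($T_k(A\cup\{a\})\le T_k(A)+k|A|+1$) and iterates, while you bound the loss from \emph{deleting} one element by the same per-element count ($\le k|B|$ progressions through a fixed point) and iterate in the other direction. The two are dual versions of the same counting argument and give the stated bound either way.
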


\begin{proof}
	It is easy to see that $T_k(A \cup \{a\}) \le T_k(A) + k|A| + 1$ by counting the number of new $k$-APs that are formed with the addition of a new element $a$ to $A$. Thus $M_{n+1} \le M_n + kn + 1$. The lemma follows by iterating this bound.
\end{proof}

It is not too hard to prove that $\phi_p^{(k,\Omega)}(\delta)$ is at most the right-hand side quantity in \eqref{eq:delta-large}. Take any $S \subset \Omega$ with $T_k(S) \ge (1-p^k)^{-1}\delta p^kT_k(\Omega)$ (here we are implicitly changing the $\delta$ in the right-hand side of \eqref{eq:delta-large} to $\delta'= (1-p^k)^{-1}\delta=\delta+o(\delta)$), and let $f$ in the variational problem \eqref{eq:var} be the function
\begin{align}\label{eq:interval}
f(a) = \begin{cases} 1 & \text{if } a \in S, \\
                     p & \text{if } a \notin S.
       \end{cases}
\end{align}
So that
\[
T_k(f) \ge (1-p^k)T_k(S) + p^k T_k(\Omega) \ge (1+\delta) p^k T_k(\Omega),
\]
and
\[
\sum_{a \in \Omega} I_p(f(a)) = |S| \log(1/p).
\]
This proves the upper bound in Proposition~\ref{ppn:rate-extremal}.

\subsection{Proof of lower bound in Proposition~\ref{ppn:rate-extremal}} To begin with note that taking $S \subset \Omega$ to be an interval of size $\ceil{\sqrt{k\delta'  T_k(\Omega)}  p^{k/2}}$, where $\delta'= (1-p^k)^{-1}\delta=\delta+o(\delta)$, ensures $T_k(S) \ge (1-p^k)^{-1}\delta p^kT_k(\Omega)$. Then taking $f$ as in \eqref{eq:interval} gives $T_k(f) \ge (1+\delta) p^k T_k(\Omega)$ and $\sum_{a\in \Omega}I_{p}(f(a))=\sqrt{k\delta'  T_k(\Omega)} p^{k/2} \log(1/p)
\leq 2 \sqrt{k\delta} N p^{k/2} \log(1/p)$, for $N$ large enough. Therefore, to show that the left-hand side of \eqref{eq:rateext} is greater than its right-hand side, it suffices to restrict our attention to functions $f \colon \Omega \to [p,1]$ that satisfy
\begin{equation}\label{eq:macro-f-assumption}
\sum_{a\in \Omega}I_{p}(f(a)) \leq 2\sqrt{k\delta} N p^{k/2} \log(1/p) \quad \text{and} \quad T_k(f)\geq (1+\delta)p^kT_k(\Omega)
\end{equation}
(note that we can restrict the range of $f$ to $[p,1]$ since $I_p(\cdot)$ is decreasing in $[0,p]$). We will show that for such $f$,
\[
\sum_{a \in \Omega}I_p(f(a)) \ge
(1+o(1)) \log(1/p) \cdot \min_{S \subset \Omega} \Big\{ |S|: T_k(S) \ge \delta p^k T_k(\Omega)\Big\}.
\]

We recall a useful asymptotic estimate of $I_{p}(\cdot)$ from \cite[Lemma 3.3]{LZ-sparse}.

\begin{lemma}
  \label{lem:entropy-est}
Let $p \to 0$, and $x = x(p) \in [0,1-p]$. If $x=o(p)$, then $I_p(p + x) \sim x^2/(2p)$.
If $x/p \to \infty$, then
$I_p(p + x) \sim x \log(x/p).$
\end{lemma}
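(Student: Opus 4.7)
The plan is to expand $I_p(p+x)$ directly from its definition and analyze the two regimes by appropriate Taylor expansions. Writing
\[ I_p(p+x) = (p+x)\log\Bigl(1 + \tfrac{x}{p}\Bigr) + (1-p-x)\log\Bigl(1 - \tfrac{x}{1-p}\Bigr), \]
the two regimes are distinguished by whether $x/p$ tends to $0$ or to $\infty$, which dictates how each logarithm should be expanded.

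In the regime $x = o(p)$, both $x/p$ and $x/(1-p)$ are small, so I would expand each logarithm to second order via $\log(1+t) = t - t^2/2 + O(t^3)$. Multiplying through and collecting terms, the linear contributions $+x$ and $-x$ from the two pieces cancel, leaving $x^2/(2p) + x^2/(2(1-p))$ as the leading quadratic with a cubic error of size $O(x^3/p^2)$. Since $p \to 0$, the piece $x^2/(2(1-p))$ is of lower order than $x^2/(2p)$, and the error is $o(x^2/p)$ precisely because $x/p = o(1)$; together these give $I_p(p+x) \sim x^2/(2p)$.

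In the regime $x/p \to \infty$, the first term already behaves like $x \log(x/p)$, because $p + x \sim x$ and $\log(1 + x/p) \sim \log(x/p)$. The task then reduces to showing the second term is negligible. I would split into sub-cases: if $x$ is bounded below away from $0$, the second term is $O(1)$ while $x\log(x/p)$ diverges; if $x \to 0$, the expansion $\log(1 - x/(1-p)) = -x/(1-p) + O(x^2)$ yields $(1-p-x)\log(1 - x/(1-p)) = -x + O(x^2)$, so its ratio to the leading $x\log(x/p)$ is of order $1/\log(x/p) \to 0$; and the boundary $x \to 1-p$ is harmless because $(1-p-x)\log((1-p-x)/(1-p)) \to 0$ while $x\log(x/p) \sim \log(1/p) \to \infty$.

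The argument has no genuine obstacle beyond careful bookkeeping. The one point deserving attention is the sub-case $x \to 0$ of the second regime, where the second logarithm contributes a linear-in-$x$ piece of the same order as $x$ but strictly smaller than the leading $x\log(x/p)$; this smallness rests entirely on $\log(x/p) \to \infty$, which is exactly the hypothesis $x/p \to \infty$.
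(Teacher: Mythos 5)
The paper does not prove this lemma; it simply cites it from \cite[Lemma~3.3]{LZ-sparse}, so there is no in-paper argument to compare against. Your proof is correct and is the natural Taylor-expansion argument. In the regime $x = o(p)$, the cancellation of the linear terms, the dominance of $x^2/(2p)$ over $x^2/(2(1-p))$ (because $p \to 0$), and the bound $O(x^3/p^2) = o(x^2/p)$ (because $x/p = o(1)$) are all accurate. In the regime $x/p \to \infty$, your case analysis is sound; the only cosmetic caveat is that $x(p)$ need not converge, so one should formally pass to subsequences along which $x$ either stays bounded away from $0$ or tends to $0$ (a standard reduction). You have correctly isolated the one delicate point: when $x \to 0$ the second logarithm contributes $-x + O(x^2)$, which is of the same order in $x$ as the prefactor of $x\log(x/p)$, and its negligibility rests solely on $\log(x/p) \to \infty$.
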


Let $f(a) = p + g(a)$, where $g:\Omega\rightarrow [0, 1-p]$. (When $\Omega = [N]$, we set both $f$ and $g$ to be zero outside $[N]$.) Note the following bounds on $g$: by convexity of $I_p(\cdot)$, \eqref{eq:macro-f-assumption}, and Lemma~\ref{lem:entropy-est},
\begin{align*}
I_p\left(p+ \frac{1}{N}\sum_{a\in \Omega}g(a)\right)
&=
I_p\left(\frac{1}{N}\sum_{a\in \Omega}f(a)\right)
\le
\frac{1}{N}\sum_{a\in \Omega}I_{p}(f(a))
\\&\le
\sqrt{k \dd}p^{k/2}\log(1/p)
\sim
I_{p}\left(p+ 2 (k\dd)^{1/4}p^{(k+2)/4}\sqrt{\log(1/p)}\right),
\end{align*}
since $\dd^{1/4}p^{(k+2)/4}\sqrt{\log(1/p)}=o(p)$ whenever $\delta=O(1)$. Moreover,  since $I_{p}(p+x)$ is increasing for $x \in [0,p-1]$,
\begin{equation}\label{eq:macro-g-sum-bound}
\frac{1}{N}\sum_{a\in \Omega}^N g(a) \lesssim {\dd}^{1/4} p^{(k+2)/4}\sqrt{\log(1/p)}.
\end{equation}

The proof of the lower bound on $\sum_{a \in \Omega} I_p(f(a))$ proceeds via two-step thresholding on the function $g$. At each step, we choose some threshold $\tau$ and decompose $g$ into its small and large components:
\[
g = g_{\le \tau} + g_{> \tau}.
\]
Here $g_{\le \tau}(a):=g (a) \bm 1\{g(a)\le \tau\}$, and $g_{> \tau} :=g (a) \bm 1\{g(a) > \tau\}$.
\begin{itemize}
\item[(1)] ({\it Thresholding}) First, we perform the decomposition with $\tau = p^{3/4}$ and show that the contribution to $T_k(f)$ from the small component $g_{\le \tau}$ is negligible.

\item[(2)] ({\it Bootstrapping}) Next, we bootstrap the argument in the first step and take a higher threshold $\tau = p^{o(1)}$.
\end{itemize}

The following lemma will be useful later.

\begin{lemma}\label{lem:double-sum}
Let $\Omega = \Z/N\Z$ or $[N]$. Let $x,y \in \{0, 1, \dots, k-1\}$ with $x \ne y$. For any $f \colon \Omega \to [0,1]$, one has
\[
\sum_{a, b} f({a+ x b})f(a+y b) \leq  (k-1)\biggl(\sum_{a\in \Omega} f(a)\biggr)^2,
\]
where the sum on the left-hand side is taken over all pairs of elements $(a,b)$ in the ambient group such that $\{a+xb,a+yb\} \subset \Omega$.
\end{lemma}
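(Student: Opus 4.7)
The plan is to perform the linear change of variables $(a,b) \mapsto (u,v) := (a+xb, a+yb)$, whose Jacobian determinant is $d := y - x$. We may assume $d > 0$ (the case $d < 0$ is symmetric), so $1 \le d \le k-1$. In the case $\Omega = [N]$ (where $(a,b)$ ranges over $\Z^2$ subject to $u,v \in [N]$), this map is injective on $\Z^2$ and its image is exactly the set of $(u,v) \in [N]^2$ with $d \mid (v-u)$. In the case $\Omega = \Z/N\Z$, setting $e := \gcd(d,N)$, each $(u,v) \in (\Z/N\Z)^2$ with $e \mid (v-u)$ has exactly $e$ preimages under the map, and other pairs have none.

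Next, I would group the resulting sum over $(u,v)$ by the common residue class of $u$ and $v$ modulo $d$ (respectively modulo $e$). Writing $S_r := \sum_{u \in \Omega,\, u \equiv r} f(u)$, the double sum in the lemma becomes $\sum_{r} S_r^2$ in the $\Omega=[N]$ case and $e\sum_r S_r^2$ in the cyclic case. The final step is an elementary estimate: since each $S_r \ge 0$ and $\sum_r S_r = F := \sum_{a \in \Omega} f(a)$, we have $\sum_r S_r^2 \le F \cdot \max_r S_r \le F^2$. In the $\Omega=[N]$ case this immediately gives $\sum_{a,b} f(a+xb)f(a+yb) \le F^2$; in the cyclic case it yields $e F^2 \le d F^2 \le (k-1)F^2$, using $e \mid d$. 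Either bound suffices.

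There is no real obstacle here. The only point requiring care is the cyclic case, where the map $b \mapsto d b$ on $\Z/N\Z$ need not be a bijection but instead has $e$-fold fibers over its image; this accounts for the factor of $e$ appearing in the change-of-variables formula, and the resulting loss is absorbed by the constant $k-1$ in the statement.
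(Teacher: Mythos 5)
Your proof is correct, and at its core it relies on the same observation as the paper's: the linear map $(a,b)\mapsto(a+xb,\,a+yb)$ has fibers of size at most $|x-y|\le k-1$ (exactly $1$ over $\Z$, and exactly $\gcd(|x-y|,N)$ or $0$ over $\Z/N\Z$). The paper then bounds the left-hand side immediately by $(k-1)\sum_{c,d\in\Omega}f(c)f(d)=(k-1)F^2$; your extra step of grouping the image by residue class modulo $e$ and invoking $\sum_r S_r^2\le F^2$ is a valid but unnecessary elaboration, since once you know each target has at most $e\le k-1$ preimages you can simply sum over \emph{all} pairs $(u,v)\in\Omega^2$ and drop the divisibility constraint.
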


\begin{proof}
The lemma follows from observing that after expanding the right-hand side, for any $c,d \in \Omega$, there are at most $k-1$ pairs of elements $a,b$ in the ambient group such that $a+xb = c$ and $a+yb = d$. Indeed, subtracting the two equations gives $(x-y)b = c-d$. Since $0 < |x-y| \le k-1$, the number of solutions for $b$ is at most 1 when $\Omega = [N]$ and at most $(k-1)$ when $\Omega = \Z/N\Z$.
\end{proof}

\subsubsection{The thresholding step} \label{sec:macro-threshold} In this section we formalize step (1) above. 
Recall that $f$ is as in \eqref{eq:macro-f-assumption}, and $f=p+g$, where $g:\Omega\rightarrow [0, 1]$ satisfies $0\leq g\leq 1-p$. From \eqref{eq:macro-f-assumption} we know that
\[
T_k(p + g_{\le \tau} + g_{> \tau}) = T_k(f) \ge (1+\delta)p^k T_k(\Omega).
\]
The expression $T_k(p + g_{\le \tau} + g_{> \tau})$, when written out as a sum, expands into a number of components. The following lemma shows that, with an appropriate choice of $\tau$, the only non-negligible contributions are $T_k(p) = p^kT_k(\Omega)$ and $T_k(g_{>\tau})$.

\begin{lemma}\label{lem:threshold} Assume that $\delta^{-3}p^{k-2}\log^{2}(1/p) \rightarrow 0$, and $f = p + g\colon \Omega \rightarrow [p, 1]$ satisfies \eqref{eq:macro-f-assumption}. Let $\tau = p^{3/4}$. Then
\begin{equation} \label{eq:threshold-bound}
T_{k}(g_{> \tau}) \ge (1-o(1)) \delta p^{k} T_k(\Omega)
\end{equation}
and
\begin{equation} \label{eq:threshold-big-sum-upper-bound}
\sum_{a\in \Omega}g_{> \tau}(a) \lesssim \sqrt \delta Np^{k/2}.
\end{equation}
\end{lemma}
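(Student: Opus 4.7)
The plan is to establish the two conclusions separately, with \eqref{eq:threshold-big-sum-upper-bound} following from a direct entropy computation and \eqref{eq:threshold-bound} requiring a more careful multilinear expansion.

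For \eqref{eq:threshold-big-sum-upper-bound}: I would argue that for any $a$ in the support of $G := g_{>\tau}$, we have $g(a)/p > p^{-1/4} \to \infty$, so the second regime of Lemma~\ref{lem:entropy-est} gives $I_p(f(a)) \sim g(a) \log(g(a)/p) \gtrsim g(a) \log(1/p)$. Summing over $a$ in the support of $G$ and comparing with the entropy bound in \eqref{eq:macro-f-assumption} would yield $\left(\sum_a G(a)\right) \log(1/p) \lesssim \sqrt{\delta}\, N p^{k/2} \log(1/p)$, giving the claim.

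For \eqref{eq:threshold-bound}: write $f = p + h + G$ where $h := g_{\le \tau}$, and expand $T_k(f)$ by multilinearity as
\[ T_k(f) = \sum_{\alpha \in \{p, h, G\}^k} T_\alpha, \qquad T_\alpha := \sum_{a,b} \prod_{j=1}^k \alpha_j(a + (j-1)b). \]
I would isolate the two distinguished terms $T_{(p,\ldots,p)} = p^k T_k(\Omega)$ and $T_{(G,\ldots,G)} = T_k(G)$, and show the remaining $3^k - 2$ mixed terms sum to $o(\delta p^k N^2)$. Combined with the lower bound $T_k(f) \ge (1+\delta) p^k T_k(\Omega)$ from \eqref{eq:macro-f-assumption} and the standard estimate $T_k(\Omega) \asymp N^2$, this gives \eqref{eq:threshold-bound}.

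To bound a mixed term $T_\alpha$, the main tool is a two-coordinate Hölder-type inequality: by Lemma~\ref{lem:double-sum}, for any two distinct indices $i_1 \ne i_2$,
\[ T_\alpha \le (k-1)\, \|\alpha_{i_1}\|_1\, \|\alpha_{i_2}\|_1 \prod_{j \ne i_1, i_2} \|\alpha_j\|_\infty, \]
and one can additionally peel off any $h$-coordinate entirely via $\|h\|_\infty \le \tau = p^{3/4}$. The operational inputs are $\|p\|_\infty = p$, $\|p\|_1 = pN$; $|h| \le |g| \lesssim \delta^{1/4} N p^{(k+2)/4} \sqrt{\log(1/p)}$ from \eqref{eq:macro-g-sum-bound}; and $\|G\|_\infty \le 1$ together with $|G| \lesssim \sqrt{\delta} N p^{k/2}$ from the preceding step.

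The case analysis is essentially routine, and I expect the main obstacle (the term that exactly saturates the hypothesis $\delta^{-3} p^{k-2} \log^2(1/p) \to 0$) to be the single-$h$ configuration with $\alpha$ having exactly one $h$-coordinate and the remaining $k-1$ coordinates all equal to $p$: here $T_\alpha \le p^{k-1} N |h| \lesssim \delta^{1/4} N^2 p^{(5k-2)/4} \sqrt{\log(1/p)}$, whose ratio to $\delta p^k N^2$ is $\delta^{-3/4} p^{(k-2)/4} \sqrt{\log(1/p)}$, precisely $o(1)$ by the hypothesis. All remaining cases are strictly easier: for labels with at least two $G$-coordinates, applying the double-sum lemma to two $G$'s gains a factor of $|G|^2 \lesssim \delta N^2 p^k$ multiplied by $p^{(\text{number of }p\text{'s})}$, yielding ratio $o(1)$ directly; labels with two or more $h$-coordinates gain additional $\tau = p^{3/4}$ factors upon peeling; and the single-$G$-rest-$p$ case gives ratio $\delta^{-1/2} p^{(k-2)/2}$, which tends to $0$ since $\delta = O(1)$, $p \to 0$, and $k \ge 3$. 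Collecting all contributions yields $T_k(f) - p^k T_k(\Omega) - T_k(G) = o(\delta p^k N^2)$, proving \eqref{eq:threshold-bound}.
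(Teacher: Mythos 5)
Your proposal follows the same route as the paper: rewrite $T_k(f)$ via the multilinear expansion into terms indexed by $\{p,h,G\}^k$, isolate the two main contributions $p^kT_k(\Omega)$ and $T_k(G)$, and bound the mixed terms using the two-coordinate inequality (Lemma~\ref{lem:double-sum}) together with the $L^1$ bounds on $h$ and $G$ and the $L^\infty$ bound $\|h\|_\infty\le\tau$. The identification of the critical term (exactly one $h$-coordinate, all others $p$) and its ratio $\delta^{-3/4}p^{(k-2)/4}\sqrt{\log(1/p)}$ is exactly the paper's decisive computation. The derivation of \eqref{eq:threshold-big-sum-upper-bound} via Lemma~\ref{lem:entropy-est} is also the same.

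However, your case analysis is not exhaustive. The four categories you list --- exactly one $h$ and rest $p$; at least two $G$'s; at least two $h$'s; and exactly one $G$ and rest $p$ --- do not cover the label $(n_p,n_h,n_G)=(k-2,1,1)$, i.e.\ one $h$-coordinate, one $G$-coordinate, and $k-2$ $p$-coordinates. (For $k=3$ this is simply $(p,h,G)$, which is in none of your buckets.) The paper handles this case uniformly as part of the ``$|Z|\le 1$ and $|Y\cup Z|\ge 2$'' alternative, applying Lemma~\ref{lem:double-sum} to the two non-$p$ coordinates and bounding everything else by $\tau$. Your tools do handle it --- e.g.\ applying Lemma~\ref{lem:double-sum} to the $h$- and $G$-coordinates gives $T_\alpha\lesssim p^{k-2}|h|\,|G|\lesssim\delta^{3/4}N^2p^{(7k-6)/4}\sqrt{\log(1/p)}$, and the ratio to $\delta p^kN^2$ is $\delta^{-1/4}p^{3(k-2)/4}\sqrt{\log(1/p)}$, which is $o(1)$ under the macroscopic hypothesis --- but the step is missing from your enumeration, so the concluding claim ``collecting all contributions'' is not yet justified.

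Two smaller points. First, for the single-$G$-rest-$p$ term you assert that $\delta^{-1/2}p^{(k-2)/2}\to 0$ ``since $\delta=O(1)$, $p\to 0$, and $k\ge 3$''; this reasoning is insufficient because $\delta$ is allowed to tend to $0$ (so $\delta^{-1/2}\to\infty$), and one should instead invoke the macroscopic hypothesis ($\delta^{-1}p^{k-2}\le\delta^{-3}p^{k-2}(\log(1/p))^2\cdot\delta^2(\log(1/p))^{-2}=o(1)$). Second, in the ``at least two $G$-coordinates'' case you say the bound is $\delta N^2p^k$ ``multiplied by $p^{(\text{number of }p\text{'s})}$'', but when the remaining non-$G$ coordinates are all $h$'s you must instead use the $\tau^{n_h}$ factor to get a $o(1)$ ratio; your setup does note $\|h\|_\infty\le\tau$, so this is an expository slip rather than a substantive error.
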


\begin{remark} In the above lemma, one may take $\tau = p^s$ for any fixed $2/3 < s < 1$.
\end{remark}

\begin{proof} From Lemma~\ref{lem:entropy-est} we have $I_p(p+g_{>\tau}(x)) \asymp g_{>\tau}(x) \log(1/p)$.
Thus
\[
\sum_{a\in \Omega} g_{> \tau}(a) \log(1/p)
\asymp
\sum_{a\in \Omega}I_p(p+g_{> \tau}(a))
\le \sum_{a \in \Omega} I_p(f(a))
\lesssim \sqrt{\delta} N p^{k/2} \log(1/p),
\]
where the final step uses \eqref{eq:macro-f-assumption}. This gives us \eqref{eq:threshold-big-sum-upper-bound}.

By expanding, we have
\begin{align}\label{eq:macro-T-expand}
T_k(p+g_{\le \tau}+g_{> \tau}) = \sum_{X, Y, Z} T_{X,Y,Z}(p,g_{\le \tau},g_{> \tau}),
\end{align}
where the sum is over all ordered partitions $(X, Y, Z)$ of the set $\{0, 1, \dots, k-1\}$, and
\begin{equation}
	\label{eq:T_XYZ}
T_{X,Y,Z}(p,g_{\le \tau},g_{> \tau}) := \sum_{a,b} p^{|X|} \prod_{y \in Y} g_{\le \tau}(a + yb) \prod_{z \in Z} g_{> \tau}(a + zb).
\end{equation}
Here the sum is taken over all pairs of elements $(a,b)$ in the ambient group such that $\{a, a+b, \dots, a+(k-1)b\} \subset \Omega$.
We say that $T_{X,Y,Z}(p,g_{\le \tau},g_{> \tau})$ {\it contributes negligibly} to the sum \eqref{kap} (or \emph{negligible} for short) if $T_{X,Y,Z}(p,g_{\le \tau},g_{> \tau})=o(\delta N^2p^k)$.  We will show that all terms except for $(X,Y,Z) = (\{0, \dots, k-1\}, \emptyset,\emptyset)$ and $(\emptyset, \emptyset, \{0, \dots, k-1\})$ are negligible, i.e., the only non-negligible terms are $p^kT_k(\Omega)$ and $T_k(g_{>\tau})$. This would prove \eqref{eq:threshold-bound}, due to the assumption $T_k(f) \ge (1+\delta)p^k T_k(\Omega)$ from \eqref{eq:macro-f-assumption}.

First, if $|Z| \ge 2$, then by Lemma~\ref{lem:double-sum} and \eqref{eq:threshold-big-sum-upper-bound},
\[
T_{X,Y,Z}(p,g_{\le \tau},g_{> \tau})
\le (k-1) \tau^{|X \cup Y|} \left(\sum_{a\in \Omega} g_{> \tau}(a)\right)^2
\lesssim \tau^{|X \cup Y|} \delta N^2 p^k.
\]
Therefore, if $|Z| \ge 2$, then the contribution from $T_{X,Y,Z}(p,g_{\le \tau},g_{> \tau})$ is negligible unless $|X \cup Y| = 0$, i.e., $(X,Y,Z) = (\emptyset, \emptyset, \{0, \dots, k-1\})$.

Next, if $|X| = k-1$ and $|Y \cup Z| = 1$, then by \eqref{eq:macro-g-sum-bound},
\[
T_{X,Y,Z}(p,g_{\le \tau},g_{> \tau})
\le k N p^{k-1} \sum_{a \in \Omega} g(a)
\lesssim
\delta^{1/4} N^2 p^{(5k-2)/4}\sqrt{\log(1/p)}
=o(\delta N^2p^k),
\]
where in the final step we use the macroscopic scale assumption $\delta^{-3}p^{k-2}(\log(1/p))^2 \rightarrow 0$. Therefore these terms are negligible.

Finally, if $|Z| \le 1$ and $|Y\cup Z|\geq 2$, then  by \eqref{eq:macro-g-sum-bound} and Lemma~\ref{lem:double-sum},
\[
T_{X,Y,Z}(p,g_{\le \tau},g_{> \tau})
\le (k-1) \tau^{k-2} \left(\sum_{a \in \Omega} g(a) \right)^2
\lesssim
\tau^{k-2} N^2\delta^{1/2} p^{(k+2)/2} \log(1/p)
=o(\delta N^2p^k),
\]
where the last step holds due to $\tau = o(p^{2/3}(\log(1/p))^{-2/(3k-6)})$ and the macroscopic scale assumption on $\delta$.

It follows from the above analysis that the only non-negligible contributions to the sum \eqref{eq:macro-T-expand} are  $(X,Y,Z) = (\{0, \dots, k-1\}, \emptyset,\emptyset)$ and $(\emptyset, \emptyset, \{0, \dots, k-1\})$, so that
\[
T_k(f) = p^kT_k(\Omega) + T_k(g_{> \tau}) + o(\delta p^k N^2).
\]
By the assumption $T_k(f) \ge (1+\delta)p^k T_k(\Omega)$ from \eqref{eq:macro-f-assumption}, we obtain $ T_k(g_{> \tau}) \ge (1-o(1))\delta p^k T_k(\Omega)$ as desired.
\end{proof}

\subsubsection{The bootstrapping step}
Now, we strengthen Lemma~\ref{lem:threshold} by replacing $\tau$ with any $\tau = o(1)$.

\begin{lemma}\label{lem:bootstrap}
Assume that $\delta^{-3}p^{k-2}\log^{2}(1/p) \rightarrow 0$, and $f = p + g\colon \Omega \rightarrow [p, 1]$ satisfies \eqref{eq:macro-f-assumption}. For any $\tau = o(1)$, we have
\[
T_{k}(g_{> \tau}) \ge (1-o(1)) \delta p^{k} T_k(\Omega).
\]
\end{lemma}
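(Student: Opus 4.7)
The plan is to bootstrap Lemma~\ref{lem:threshold} by a short multilinear expansion. When $\tau \le p^{3/4}$, the inclusion $\{g>\tau\}\supseteq\{g>p^{3/4}\}$ gives $g_{>\tau}\ge g_{>p^{3/4}}$ pointwise, so Lemma~\ref{lem:threshold} immediately delivers the conclusion. Henceforth I assume $\tau>p^{3/4}$, set $h:=g\cdot\mathbf{1}\{p^{3/4}<g\le\tau\}$, and write $g_{>p^{3/4}}=g_{>\tau}+h$. The tools I will use are the pointwise bounds $0\le h\le\tau=o(1)$ and $g_{>\tau}\le 1$, together with the $\ell^1$ bounds
\[
\sum_{a\in\Omega} h(a),\ \sum_{a\in\Omega} g_{>\tau}(a) \ \le\ \sum_{a\in\Omega} g_{>p^{3/4}}(a)\ \lesssim\ \sqrt{\delta}\,Np^{k/2},
\]
which are inherited from Lemma~\ref{lem:threshold}.

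Expanding $T_k(g_{>\tau}+h)$ multilinearly gives $T_k(g_{>\tau})$ plus $2^k-1$ cross-terms
\[
E_S:=\sum_{a,b}\ \prod_{i\in S} h(a+ib)\prod_{i\notin S} g_{>\tau}(a+ib),\qquad \emptyset\ne S\subseteq\{0,1,\dots,k-1\}.
\]
The crux will be to show $E_S=o(\delta N^2p^k)$ for every such $S$. Since $T_k(\Omega)\asymp N^2$, summing over $S$ will then yield $T_k(g_{>\tau}) \ge T_k(g_{>p^{3/4}}) - o(\delta p^k T_k(\Omega))$, and a final application of Lemma~\ref{lem:threshold} will close the argument.

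To bound each $E_S$, I plan to select two ``privileged'' indices $x\ne y\in\{0,\dots,k-1\}$ at which to invoke the bilinear version of Lemma~\ref{lem:double-sum}, namely $\sum_{a,b} u(a+xb)v(a+yb)\lesssim (\sum u)(\sum v)$ for non-negative $u,v$ (which follows verbatim from the pair-counting proof of Lemma~\ref{lem:double-sum}), and at every other position I will use the cheap pointwise bounds $h\le\tau$ and $g_{>\tau}\le 1$. When $2\le |S|\le k-1$ I put one privileged index in $S$ and one in $S^c$, obtaining
\[
E_S\ \lesssim\ \tau^{|S|-1}\Big(\sum h\Big)\Big(\sum g_{>\tau}\Big)\ \lesssim\ \tau^{|S|-1}\delta N^2 p^k;
\]
when $S=\{0,\dots,k-1\}$ both privileged indices sit in $S$, yielding $\tau^{k-2}(\sum h)^2\lesssim\tau^{k-2}\delta N^2 p^k$. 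In both cases the exponent of $\tau$ is at least $1$ (using $k\ge 3$), so the bound is $o(\delta N^2p^k)$.

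The only delicate case, and the place where bookkeeping is the genuine obstacle, is $|S|=1$: the single $h$-factor contributes only one power of $\tau$, so spending a privileged index on it would leave only $(\sum g_{>\tau})^2\lesssim \delta N^2 p^k$ without any $\tau$-saving, which is not enough. I avoid this by placing \emph{both} privileged indices in $S^c$, which is possible precisely because $|S^c|=k-1\ge 2$; this yields $E_S\lesssim \tau(\sum g_{>\tau})^2\lesssim\tau\,\delta N^2p^k=o(\delta N^2p^k)$. Note that the macroscopic-scale hypothesis $\delta^{-3}p^{k-2}\log^2(1/p)\to 0$ is not directly invoked at this step; it entered already in Lemma~\ref{lem:threshold}, whose consequences are what we are now leveraging.
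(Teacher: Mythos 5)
Your argument is correct, and it delivers the same quantitative conclusion, but via a different decomposition than the paper's. The paper bounds the difference $T_k(g_{>p^{3/4}}) - T_k(g_{>\tau})$ by telescoping it into $k$ nonnegative terms, the $j$-th being at most
\[
\sum_{a,b} g_1(a) g_2(a+b) \cdots g_k(a+(k-1)b), \qquad g_j = g_{\le\tau},\ g_i = g_{>p^{3/4}}\ (i\ne j),
\]
with a single small factor in position $j$. This normalizes every term to contain exactly one factor bounded pointwise by $\tau$ and $k-1 \ge 2$ large factors, so Lemma~\ref{lem:double-sum} can be applied uniformly, giving $k^2\tau\bigl(\sum g_{>p^{3/4}}\bigr)^2 \lesssim \tau\,\delta p^k N^2$ with no case split. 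You instead perform the full multilinear expansion of $T_k(g_{>\tau}+h)$ into $2^k-1$ cross-terms $E_S$, which forces you to treat $|S|=1$ separately by placing both privileged indices in $S^c$ — a correct but more delicate step. The ingredients (pointwise $\tau$-bound, Lemma~\ref{lem:double-sum}, the $\ell^1$ control \eqref{eq:threshold-big-sum-upper-bound}) and the role of the macroscopic hypothesis (entering only through the preceding thresholding lemma, not the bootstrap itself) are identical; the telescoping is simply the more economical bookkeeping.
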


\begin{proof}
Observe that
\begin{equation}\label{eq:bootstrap-split-bound}
T_k(g_{> p^{3/4}}) - T_k(g_{> \tau}) \le k^2 \tau \left( \sum_{a \in \Omega} g_{>p^{3/4}}(a)\right)^2 \lesssim \tau \delta p^k N^2=  o(\delta p^k N^2).
\end{equation}
To prove the first inequality, note that the left-hand side can be bounded above by a sum of $k$ terms, where the $j$-th term (for $1 \leq j \leq k$) is itself the following sum 
\[
\sum_{a,b \in \Omega} g_1(a)g_2(a+b) \cdots g_k(a+(k-1)b)
\]
where $g_j = g_{\le \tau}$ and all other $g_i$'s are set to $g_{> p^{3/4}}$, and this sum can be bounded by $\tau (k-1)(\sum_{a \in \Omega} g_{>p^{3/4}})^2$ using Lemma~\ref{lem:double-sum}. The second part of \eqref{eq:bootstrap-split-bound} follows from \eqref{eq:threshold-big-sum-upper-bound}. By \eqref{eq:threshold-bound} we have $T_k(g_{> p^{3/4}}) \ge (1-o(1))\delta p^k T_k(\Omega)$, and thus by \eqref{eq:bootstrap-split-bound} we have $T_k(g_{> \tau}) \ge (1-o(1))\delta p^k T_k(\Omega)$ as well.
\end{proof}

\subsubsection{Completing the proof of Proposition~\ref{ppn:rate-extremal}}

Assume that $f \colon \Omega \to [p,1]$ satisfies \eqref{eq:macro-f-assumption}. Let $f = p + g$, and choose some threshold $\tau$ satisfying $\tau = p^{o(1)} \to 0$ (e.g., $\tau = 1/\log(1/p)$ for concreteness). By Lemma~\ref{lem:bootstrap}, one has
\[
T_k(g_{> \tau}) \ge (1 - o(1)) \delta p^k T_k(\Omega).
\]
This implies, by Lemma~\ref{lem:weighted-AP-count} below (used in the contrapositive in conjunction with Lemma~\ref{lem:max-kap-stability}),
\[
\sum_{a \in \Omega} g_{> \tau} (a) \ge
(1+o(1)) \min_{S \subset \Omega} \Big\{ |S|: T_k(S) \ge \delta p^k T_k(\Omega)\Big\}.
\]
By Lemma~\ref{lem:entropy-est}, $I_p(p+g_{>\tau}(a)) \sim g_{>\tau}(x)\log(1/p)$ since $\tau = p^{o(1)}$. Therefore,
\begin{align*}
\sum_{a \in \Omega} I_p(f(a)) \ge \sum_{a \in \Omega} I_p(p + g_{>\tau}(a))
&= (1+o(1)) \log(1/p)\sum_{a \in \Omega} g_{> \tau} (a)
\\
&\ge (1+o(1)) \log(1/p) \min_{S \subset \Omega} \Big\{ |S|: T_k(S) \ge \delta p^k T_k(\Omega)\Big\},
\end{align*}
thereby completing the proof of Proposition~\ref{ppn:rate-extremal}, modulo the following lemma, which says  that the problem of maximizing the number of $k$-APs in a set remains roughly unchanged even if we allow the elements to be weighted.

\begin{lemma}
\label{lem:weighted-AP-count}
Let $f \colon \Omega \to [0,1]$ be such that $\sum_{a\in \Omega}f(a) = m $. Then
\begin{equation} \label{eq:weighted-GS}
T_k (f) \le (1+o(1))  \max_{A\subset \Omega: |A| \le m} T_k(A)
 \end{equation}
provided that $m \to \infty$ as $N \to \infty$.
\end{lemma}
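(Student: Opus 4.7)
The plan is to use a random rounding argument exploiting the multilinearity of $T_k$ across distinct AP positions. Let $Y \subseteq \Omega$ be the random subset where each $a \in \Omega$ is independently included with probability $f(a)$, so that $\EE|Y| = m$ and $\Var(|Y|) = \sum_a f(a)(1-f(a)) \le m$.

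The first step is to compare $T_k(f)$ with $\EE T_k(Y)$. For any AP $(a, a+b, \ldots, a+(k-1)b) \subset \Omega$ with $b \neq 0$, the $k$ positions are distinct (apart from the $O(N)$ degenerate APs coming from torsion in composite $\ZZ/N\ZZ$, whose contribution is $O(N) = o(M_m)$ since $M_m \gtrsim m^2$ and $m \to \infty$), so by independence of the coordinate indicators, $\EE \prod_i \bm 1_Y(a+ib) = \prod_i f(a+ib)$. Including the trivial $b = 0$ diagonal terms, where $\EE \bm 1_Y(a)^k = f(a) \ge f(a)^k$, I would obtain
\[
    \EE T_k(Y) = T_k(f) + \sum_a \bigl(f(a) - f(a)^k\bigr) \ge T_k(f).
\]

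Next, the deterministic bound $T_k(Y) \le M_{|Y|}$, where $M_n := \max_{A \subseteq \Omega,\, |A| \le n} T_k(A)$ (the notation used in Lemma~\ref{lem:max-kap-stability}), would give $T_k(f) \le \EE M_{|Y|}$, reducing the task to showing $\EE M_{|Y|} \le (1+o(1)) M_m$. For this I would use the quadratic estimate $M_n \le n^2/(k-1) + O(1)$, which follows from Theorem~\ref{thm:maxAP} (via the explicit formula~\eqref{eq:Tk(n)}) in the regime $n \le c_k N$ relevant to the applications, together with the second-moment bound $\EE|Y|^2 = m^2 + \Var(|Y|) \le m^2 + m$. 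These combine to give
\[
    \EE M_{|Y|} \le \EE|Y|^2/(k-1) + O(1) \le M_m + O(m) = (1+o(1))M_m,
\]
since $M_m \ge m^2/(k-1) - O(1)$ and $m \to \infty$.

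The only technicality I anticipate lies in handling the tail event $\{|Y| > c_k N\}$, where the sharp quadratic estimate on $M_n$ may fail (e.g., if $\Omega = \ZZ/N\ZZ$ has subgroups). In the regime of interest $m \ll N$ (which is the setting of the intended application in Section~\ref{sec:macro}, where $m \lesssim \sqrt{\delta}Np^{k/2}$), Chernoff gives $\PP(|Y| > c_k N) \le \exp(-\Omega(N))$, and since $M_n \le N^2$ trivially, this tail contributes $\ll M_m$ to $\EE M_{|Y|}$. I do not foresee any substantive obstacle; the main content is the first step, which uses multilinearity to transfer between a weighted and an unweighted AP count essentially losslessly, with the remaining analysis being routine concentration.
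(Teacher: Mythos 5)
Your overall strategy is the same as the paper's: replace $f$ by a random subset $Y \subseteq \Omega$ with $\P(a \in Y) = f(a)$ independently, use that for each $k$-tuple $\prod_i f(a+ib) \le \P(a,\dots,a+(k-1)b \in Y)$ (with equality when the positions are distinct), so $T_k(f) \le \E T_k(Y) \le \E M_{|Y|}$, and then argue that $\E M_{|Y|} \le (1+o(1))M_m$. The first step is sound.

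The gap is in how you bound $\E M_{|Y|}$. You invoke the sharp extremal estimate $M_n \le n^2/(k-1) + O(1)$, which comes from Theorem~\ref{thm:maxAP}. But Theorem~\ref{thm:maxAP} applies to $A \subset \Z$, or to $A \subset \Z/N\Z$ only when $N$ is \emph{prime} (and $n \le c_k N$). Lemma~\ref{lem:weighted-AP-count} carries no primality hypothesis, and it is used to prove Proposition~\ref{ppn:rate-extremal}, which is explicitly stated to hold for composite $N$. For composite $N$ the quadratic estimate genuinely fails below the $c_k N$ cutoff you guard against: as the remark after Corollary~\ref{cor:upper-tail-rate} notes, if $N = N_1 N_2$ then the coset $A = N_2 \Z/N\Z$ has $|A| = N_1$ and $T_k(A) = N_1^2$, not $N_1^2/(k-1) + O(1)$. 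In that setting, the only primality-free pointwise bound is the trivial $M_n \le n^2$, and feeding that into your second-moment computation only yields $\E M_{|Y|} \le m^2 + m$, which beats $M_m$ by at most a factor of $k-1$, not $1+o(1)$.

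The paper sidesteps this by never invoking Theorem~\ref{thm:maxAP} inside this lemma. Instead it uses the elementary, primality-free stability bound of Lemma~\ref{lem:max-kap-stability}, $M_{n+s} \le M_n + ks(n+s)$, to get $M_s \le (1+o(1))M_m$ for $s \le m + m^{2/3}$, and pairs that with Chernoff tail bounds on $|Y|$ plus the trivial $M_s \le s^2$ for the far tail. Your second-moment framing is salvageable with the same replacement: bound $\E M_{|Y|} \le M_m + k\,\E\bigl[(|Y|-m)^+ |Y|\bigr]$ via Lemma~\ref{lem:max-kap-stability} and then apply Cauchy--Schwarz and $\mathrm{Var}(|Y|) \le m$ to get an $O(m^{3/2}) = o(M_m)$ error; but as written, the step through Theorem~\ref{thm:maxAP} does not cover the range of $\Omega$ where the lemma is needed.
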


\begin{proof}
Let $M_s = \max_{A\subset \Omega: |A| \le s} T_k(A)$.
Let $\Omega_f$ be a random subset of $\Omega$ chosen by including element $a\in \Omega$ with probability $f(a)$ independently for all $a \in \Omega$. Note that $\E[|\Omega_f|] = m$, and for $a, b \in \Omega$,
\[
f(a)f(a+b)\cdots f(a+(k-1)b) \le \P(a, a+b,\dots a+(k-1)b \in A).
\]
(it is always an equality when the elements of the $k$-AP are distinct). This implies
\[
T_k(f) \leq \E T_k(\Omega_f) \leq \sum_{s \ge 1} \P(|\Omega_f| = s) M_s.
\]
For $s \le m + m^{2/3}$, we bound $M_s \le M_{m+m^{2/3}} = (1+o(1))M_m$.
And for $s > m + m^{2/3}$ we use the trivial bound $M_s \le s^2$. Thus
\begin{align*}
T_k(f) &\le (1+o(1)) M_m + 4m^2 \P(m + m^{2/3} < |A| \le 2m)  + \sum_{s > 2m} \P(|A| = s)s^2
\\
&\le (1+o(1))M_m + 4m^2 \P(|A| > m + m^{2/3})  + \sum_{s > 2m} \P(|A| \ge s)s^2
\\
&\le (1+ o(1)) M_m + 4m^2e^{-m^{1/3}/3} + \sum_{s > 2m} s^2e^{-s/6}
\\
&= (1+o(1))M_m,
\end{align*}
where the penultimate step uses Chernoff bound in the following form: if $X$ is a sum of independent indicator random variables, and $\E X = \mu$, then for any $\delta > 0$, $\P(X \ge (1+\delta)\mu) \le e^{-\min\{\delta^2,\delta\}\mu/3}$.
\end{proof}

This completes the proof of Proposition~\ref{ppn:rate-extremal}. Combining it with Theorem~\ref{thm:maxAP}, which will be proved in Section~\ref{sec:kAPmax}, yields Theorem~\ref{thm:rate}, the asymptotic solution to the variational problem in the macroscopic scale.

\section{Variational problem at the microscopic scale}
\label{sec:micro}

In this section, we prove Theorem~\ref{thm:micro-rate}, which solves the variational problem in the microscopic scale, i.e., $\delta^{-3}p^{k-2}\log^{2}(1/p) \rightarrow \infty$.  The following theorem unifies the settings $\Omega = [N]$ and $\Z/N\Z$.
\begin{theorem} \label{thm:micro-rate-general}
Fix $k\ge 3$. Let $\Omega =[N]$ or $\Z / N\Z$ . Then for $p = p_N \rightarrow 0$ and $\delta = \delta_N > 0$ such that $\delta^{-3} p^{k-2}(\log(1/p))^2 \to \infty$, we have
\begin{equation}\label{eq:micro-rate-general}
\phi_p^{(k, \Omega)}(\delta)=(1+o(1))\frac{\dd^2 T_k(\Omega)^2 p}{2\sum_{a\in \Omega}\nu_a^2},
\end{equation}
where $\nu_a$ is the number of $k$-APs in $\Omega$ containing $a \in \Omega$ (the constant $k$-AP $a,a,\dots,a$ is counted $k$ times), i.e., $\nu_a$ is the number of triples $(x,y,j)$ where $x,y$ are elements in the ambient group, $j \in \{0, 1, \dots, k-1\}$, so that $a = x + jy$ and $\{x,x+y, \dots, x+(k-1)y\} \subset \Omega$.
\end{theorem}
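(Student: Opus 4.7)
The plan is to linearize $T_k$ about the constant function $p$ and read the answer off as a Cauchy--Schwarz extremum. Writing $f=p+g$ and expanding the multilinear form,
\[
T_k(f) = p^k T_k(\Omega) + p^{k-1}\sum_{a\in\Omega} \nu_a\, g(a) + \sum_{\substack{S\subseteq\{0,\dots,k-1\}\\|S|\ge 2}} p^{k-|S|}\sum_{a,b}\prod_{i\in S} g(a+ib),
\]
the coefficient of the linear term is precisely $\nu_a$, by the very definition of $\nu_a$. In the microscopic regime I expect the $|S|\ge 2$ terms to be negligible, so that the constraint $T_k(f)\ge (1+\delta)p^k T_k(\Omega)$ reduces to $\sum_a \nu_a g(a)\ge (1-o(1))\delta p\,T_k(\Omega)$. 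Combined with the Taylor approximation $I_p(p+g(a))\sim g(a)^2/(2p)$ from Lemma~\ref{lem:entropy-est} (valid when $g(a)=o(p)$), Cauchy--Schwarz identifies the extremizer as $g\propto\nu$ and gives the stated value.

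\textbf{Upper bound.} I would use the test function $f(a)=p+c\nu_a$ with $c=(1+\eta)\delta p\,T_k(\Omega)/\sum_a \nu_a^2$ for some slowly vanishing $\eta>0$. Since $\nu_a=O(N)$, one has $g(a)=O(\delta p)=o(p)$ throughout the microscopic scale. The linear contribution to $T_k(f)$ equals $cp^{k-1}\sum_a\nu_a^2=(1+\eta)\delta p^k T_k(\Omega)$, and the $|S|\ge 2$ contributions are bounded via Lemma~\ref{lem:double-sum} (by picking two indices in $S$ and bounding the remaining factors by $\|g\|_\infty^{|S|-2}$) to be $O(\delta^2 p^k N^2)=o(\delta p^k T_k(\Omega))$, so the constraint holds for large $N$. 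Lemma~\ref{lem:entropy-est} then gives $\sum_a I_p(f(a)) = (1+o(1))\sum_a (c\nu_a)^2/(2p) = (1+o(1))\delta^2 p\,T_k(\Omega)^2/(2\sum_a\nu_a^2)$.

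\textbf{Lower bound.} For any $f=p+g$ attaining the minimum up to a constant factor, I would truncate $g=g_s+g_\ell$, where $g_s:=g\cdot\mathbf{1}_{\{|g|\le\tau\}}$ and $\tau=o(p)$ decays just slightly slower than $p$ (e.g.\ $\tau=p/\log\log(1/p)$). The convexity estimate $I_p''\ge 4$ gives $\|g\|_2=O(\delta\sqrt{Np})$, while the one-sided bound $I_p(p+x)\gtrsim |x|\min(|x|/p,\log(1/p))$ controls $\|g_\ell\|_1$ in terms of the entropy budget. Combined with Lemma~\ref{lem:double-sum}, these imply that the $|S|=1$ contribution of $g_\ell$ and all $|S|\ge 2$ contributions to $T_k(p+g)$ are each $o(\delta p^k T_k(\Omega))$; this is exactly where the microscopic hypothesis $\delta^{-3}p^{k-2}(\log(1/p))^2\to\infty$ enters. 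Hence the constraint reduces to $\sum_a \nu_a g_s(a)\ge (1-o(1))\delta p\,T_k(\Omega)$, Cauchy--Schwarz gives $\|g_s\|_2^2\ge (1-o(1))\delta^2 p^2 T_k(\Omega)^2/\sum_a \nu_a^2$, and since $|g_s|\le\tau=o(p)$, Lemma~\ref{lem:entropy-est} upgrades this to $\sum_a I_p(f(a))\ge \sum_a I_p(p+g_s(a))\ge (1-o(1))\delta^2 p\,T_k(\Omega)^2/(2\sum_a\nu_a^2)$.

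\textbf{Main obstacle.} The hardest part is the bookkeeping for the nonlinear terms together with the truncation: the threshold $\tau$ must be small enough that $I_p(p+g_s)\sim g_s^2/(2p)$ is asymptotically sharp, yet large enough that the entropy budget forces $\|g_\ell\|_1$ to be correspondingly small; and each nonlinear piece $|S|\ge 2$ must be shown to lose by at least one factor characteristic of the microscopic scale against the linear target $\delta p^k T_k(\Omega)$. Once these estimates are in place, the rest is a clean application of Cauchy--Schwarz.
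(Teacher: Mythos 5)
Your upper bound is fine in spirit (and could be simplified: since $g = c\nu_a \ge 0$, the $|S|\ge 2$ contributions to $T_k(p+g)$ are nonnegative and can simply be dropped, which is how the paper proceeds, taking $c = \delta p T_k(\Omega)/\sum_a \nu_a^2$ exactly and avoiding the $\eta$ overshoot).

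The lower bound, however, has a genuine gap, and it sits precisely at the ``main obstacle'' you flag. Your single threshold $\tau = p/\log\log(1/p)$ is chosen so that $\tau = o(p)$, which makes $I_p(p + g_s) \sim g_s^2/(2p)$ sharp, but this forces $\tau < p$, and for $x$ just above $\tau$ one only has $I_p(p+x) \asymp x^2/p \approx x\cdot(\tau/p)$. So the entropy budget controls $\|g_\ell\|_1$ only with a \emph{loss}: $\|g_\ell\|_1 \lesssim (p/\tau)\sum_a I_p(f(a)) \lesssim \delta^2 Np \log\log(1/p)$. Now take the all--$g_\ell$ term $|X|=|Y|=0$, $|Z|=k$ in the multilinear expansion. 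Lemma~\ref{lem:double-sum} (bounding the remaining $k-2$ factors by $\|g_\ell\|_\infty \le 1$) gives a bound of order $(\sum_a g_\ell(a))^2 \lesssim \delta^4 N^2 p^2 (\log\log(1/p))^2$, and negligibility against $\delta p^k T_k(\Omega) \asymp \delta p^k N^2$ requires $\delta^{-3} p^{k-2} \gg (\log\log(1/p))^2$. The microscopic hypothesis only gives $\delta^{-3} p^{k-2} \gg (\log(1/p))^{-2}$, which is much weaker (the right-hand side tends to $0$, not $\infty$). Your $\|g\|_2 \lesssim \delta\sqrt{Np}$ bound does not rescue this: feeding $\|g\|_2^2$ into Lemma~\ref{lem:double-sum} for $|S|=2$ gives $\delta^2 p^{k-1} N^2$, requiring $\delta = o(p)$, again not implied by the microscopic hypothesis.

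The point you miss is that one threshold cannot serve both purposes, and the paper uses \emph{two}. First it truncates at $\tau = p^{3/4}$, which is a polynomial power \emph{above} $p$; then for $x \ge p^{3/4}$ one has $I_p(p+x) \gtrsim x\log(x/p) \gtrsim x\log(1/p)$, so $\sum_a g_{>\tau}(a) \lesssim \delta^2 Np/\log(1/p)$ — a $\log(1/p)$ \emph{gain}. Plugging this into the $|Z|\ge 2$ terms gives $(\sum g_{>\tau})^2 \lesssim \delta^4 N^2 p^2/(\log(1/p))^2$, and negligibility against $\delta p^k N^2$ is precisely $\delta^{-3}p^{k-2}(\log(1/p))^2 \to\infty$ — this is exactly where and how the microscopic hypothesis enters, saturating it. The terms with $|Z|\le 1$ and $|Y\cup Z|\ge 2$ are then handled by Lemma~\ref{lem:double-sum} using the key inequality $p^{|X|}\tau^{|Y|-1} \le \tau^{k-2}$, valid because $\tau > p$. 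This first step yields $\sum_a g(a)\nu_a \ge (1-o(1))\delta p T_k(\Omega)$. Only afterwards, and only for $\Omega=[N]$, does the paper introduce a \emph{second}, smaller threshold $p^{1.1}=o(p)$ (for $\Omega=\Z/N\Z$, convexity and the constancy of $\nu_a$ make a second truncation unnecessary). At this stage one shows $\sum_a g_{>p^{1.1}}(a) = o(\delta p N)$, feeds $g_{\le p^{1.1}}$ into Cauchy--Schwarz against $\nu_a$, and uses $I_p(p + g_{\le p^{1.1}}(a)) \sim g_{\le p^{1.1}}(a)^2/(2p)$ to finish.
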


In Section~\ref{sec:micro-proof}, we prove Theorem~\ref{thm:micro-rate-general}, following similar thresholding strategy to the macroscopic setting. In Section~\ref{sec:micro-rate}, we then compute the rate formulae in Theorem~\ref{thm:micro-rate}.

\subsection{Proof of Theorem \ref{thm:micro-rate-general}} \label{sec:micro-proof}
We begin by showing that $\phi_p^{(k, \Omega)}(\delta)$ is at most the right-hand side of \eqref{eq:micro-rate-general}. The claim follows from an explicit construction of a function $f$ in the variational problem \eqref{eq:var}, as given by the following lemma.

\begin{lemma}\label{lm:microexample}
Let $\Omega = [N]$ or $\ZZ/N\ZZ$, and let $\nu_a$ be defined as in Theorem~\ref{thm:micro-rate-general}. Define $g \colon \Omega \to [0,1]$ by
\[
g(a) = \frac{\delta T_k(\Omega) \nu_a}{\sum_{a\in \Omega}\nu_a^2}p.
\]
Then $f = p + g$ satisfies
\begin{align}\label{eq:microexample}
I_p(f(a)) \sim \frac{\dd^2 T_k(\Omega)^2 p}{2\sum_{a\in \Omega}\nu_a^2} \quad \text{and} \quad 
T_k(f) \geq (1+\delta)p^k T_k(\Omega).
\end{align}
As a consequence, $\phi_p^{(k, \Omega)}(\delta) \leq (1+o(1))\frac{\dd^2 T_k(\Omega)^2 p}{2\sum_{a\in \Omega}\nu_a^2}$. 
\end{lemma}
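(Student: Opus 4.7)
The plan is to treat $f = p + g$ as a small perturbation of the constant function $p$ and verify each assertion by a linear-order calculation. The specific choice of $g$ is the unique minimizer (by Cauchy--Schwarz) of $\tfrac{1}{2p}\sum_a g(a)^2$ subject to the linearized constraint $p^{k-1}\sum_a \nu_a g(a) \ge \delta p^k T_k(\Omega)$; this choice will match the leading-order linearization of both $T_k(f)$ and $\sum_a I_p(f(a))$ around the constant vector $p$.

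First I would verify that $g(a) = o(p)$ uniformly in $a \in \Omega$. Fixing $j \in \{0,\dots,k-1\}$ and $y$ in the ambient group determines $x = a - jy$, so $\nu_a = O(N)$ uniformly. On the other hand, $\sum_a \nu_a = kT_k(\Omega)$ (each $k$-AP is counted $k$ times), and $T_k(\Omega) \gtrsim N^2$, so by Cauchy--Schwarz $\sum_a \nu_a^2 \ge (\sum_a \nu_a)^2/|\Omega| \gtrsim N^3$. Combining,
\[
\frac{\nu_a\, T_k(\Omega)}{\sum_b \nu_b^2} \lesssim \frac{N \cdot N^2}{N^3} = O(1),
\]
so $g(a)/p \lesssim \delta$. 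The microscopic hypothesis $\delta^{-3} p^{k-2}(\log(1/p))^2 \to \infty$ forces $\delta \to 0$ (since $p^{k-2}(\log(1/p))^2 \to 0$ as $p \to 0$ for $k \ge 3$), hence $g(a) = o(p)$ uniformly. In particular, $g$ is $[0,1]$-valued as required, and Lemma~\ref{lem:entropy-est} yields $I_p(p + g(a)) \sim g(a)^2/(2p)$ pointwise. Summing,
\[
\sum_{a\in\Omega} I_p(f(a)) \sim \frac{1}{2p}\sum_{a\in\Omega} g(a)^2 = \frac{1}{2p}\cdot\frac{\delta^2 T_k(\Omega)^2 p^2}{\bigl(\sum_b \nu_b^2\bigr)^2}\sum_a \nu_a^2 = \frac{\delta^2 T_k(\Omega)^2 p}{2\sum_a \nu_a^2}.
\]

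For the $k$-AP lower bound, I would expand
\[
T_k(f) = \sum_{a,b}\prod_{j=0}^{k-1}\bigl(p + g(a+jb)\bigr) = \sum_{S \subseteq \{0,\dots,k-1\}} p^{k-|S|}\sum_{a,b}\prod_{j\in S} g(a+jb),
\]
where all summands are nonnegative since $g \ge 0$. Truncating to $|S| \le 1$ only decreases the sum; the $|S|=0$ term is $p^k T_k(\Omega)$, while the $|S|=1$ contribution equals
\[
p^{k-1}\sum_{j=0}^{k-1}\sum_{a,b} g(a+jb) = p^{k-1}\sum_{a\in\Omega} \nu_a g(a) = p^{k-1}\cdot \frac{\delta T_k(\Omega)\, p}{\sum_b \nu_b^2} \sum_{a} \nu_a^2 = \delta p^k T_k(\Omega)
\]
by the definitions of $\nu_a$ and of $g$. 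Hence $T_k(f) \ge (1+\delta)p^k T_k(\Omega)$, and inserting this $f$ into the variational problem \eqref{eq:var} yields the claimed upper bound on $\phi_p^{(k,\Omega)}(\delta)$. The only technical point is the uniform smallness $g(a) \ll p$; once this is established, both claims reduce to routine bookkeeping on the binomial expansion and the quadratic approximation of relative entropy.
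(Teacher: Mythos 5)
Your proof is correct and follows essentially the same route as the paper's: bound $g(a)$ by $O(\delta p)$, use the microscopic hypothesis to get $\delta \to 0$ hence $g(a) = o(p)$, apply the quadratic approximation of $I_p$ (Lemma~\ref{lem:entropy-est}) for the entropy, and drop the nonnegative higher-order terms in the binomial expansion of $T_k(p+g)$ to isolate the linear term $p^{k-1}\sum_a \nu_a g(a) = \delta p^k T_k(\Omega)$. The paper states $\nu_a \asymp N$ and $\sum_a \nu_a^2 \asymp N^3$ without derivation; you only establish and use the directions actually needed ($\nu_a = O(N)$, $\sum_a \nu_a^2 \gtrsim N^3$), and you make explicit the deduction $\delta \to 0$ from the microscopic hypothesis, which the paper leaves implicit — these are minor expositional differences, not a different argument.
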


\begin{proof}
We have $\nu_a \asymp N$, $\sum_{a \in \Omega} \nu_a^2 \asymp N^3$, so $g(a) \asymp \delta p = o(p)$, and thus by Lemma~\ref{lem:entropy-est},
\[
\sum_{a \in \Omega} I_p(f(a))
\sim \sum_{a \in \Omega} \frac{g(a)^2}{2p}
\sim \frac{\dd^2 T_k(\Omega)^2 p}{2\sum_{a\in \Omega}\nu_a^2}.
\]
Next, by expanding we have
\begin{align*}
T_k(f) = T_k(p+g)
&\ge p^k T_k(\Omega) + p^{k-1} \sum_{x,y} \sum_{j=0}^{k-1} g(x+jy)
\\
&= p^k T_k(\Omega) + p^{k-1} \sum_{a \in \Omega} g(a)\nu_a = (1+\delta)p^k T_k(\Omega),
\end{align*}
where $(x,y)$ ranges over all pairs of elements in the ambient group such that $\{x, x+y, \dots, x+(k-1)y\} \subset \Omega$. This proves \eqref{eq:microexample} and the upper bound on $\phi_p^{(k, \Omega)}(\delta)$ in Theorem~\ref{thm:micro-rate-general}.
\end{proof}

Now we prove the lower bound on $\phi_p^{(k, \Omega)}(\delta)$. To begin with, using $\sum_{a \in \Omega} \nu_a = kT_k(\Omega)$ and the Cauchy--Schwarz inequality, we have
\begin{equation} \label{eq:micro-equal}
\phi_p^{(k, \Omega)}(\delta)
\le
(1+o(1)) \frac{\dd^2 T_k(\Omega)^2 p}{2\sum_{a\in \Omega}\nu_a^2}
\le (1+o(1)) \frac{\delta^2 p N}{2k^2}.
\end{equation}
Therefore, to prove the lower bound on $\phi_p^{(k, \Omega)}(\delta)$ in \eqref{eq:micro-rate-general}, we can restrict our attention to functions $f = p + g \colon \Omega \to [p,1]$ satisfying
\begin{equation}
   \label{eq:micro-f-assumption}
\sum_{a\in \Omega} I_p(f(a)) \le \delta^2 N p
\quad \text{and}
\quad
 T_k(f)\geq (1+\delta)p^k T_k(\Omega).
\end{equation}
Then by convexity, \eqref{eq:micro-f-assumption}, and Lemma \ref{lem:entropy-est},
\[
I_p\left(p+ \frac{1}{N}\sum_{a\in \Omega}g(a)\right)
\le
\frac{1}{N}\sum_{a\in \Omega} I_p(p+g(a))
\le \delta^2 p
\sim
I_p(p+\sqrt{2} \delta p).
\]
Since $I_p(p+x)$ is increasing for $x \in [0,1-p]$,  we have
\begin{equation}\label{eq:micro-g-sum-bound}
\sum_{a \in \Omega} g(a) \lesssim \delta p N.
\end{equation}

The following lemma gives a lower bound on the weighted average of any function $g$ satisfying  $f = p + g\colon \Omega \rightarrow [p, 1]$ with $T_k(f) \ge (1+\delta)p^k T_k(\Omega)$, in the microscopic regime. 

\begin{lemma}\label{lem:threshold} Assume that $\delta^{-3}p^{k-2}\log^{2}(1/p) \rightarrow \infty$, and $f = p + g\colon \Omega \rightarrow [p, 1]$ with $T_k(f) \ge (1+\delta)p^k T_k(\Omega)$. Then \begin{equation} \label{eq:micro-g-sum-lower-bd}
\sum_{a \in \Omega} g(a)\nu_a \ge (1 - o(1))\delta p T_k(\Omega).
\end{equation}
\end{lemma}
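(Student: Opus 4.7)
The plan is to expand $T_k(f) = T_k(p+g)$ by multilinearity:
\[ T_k(p+g) \;=\; \sum_{Y \subseteq \{0,\dots,k-1\}} p^{k-|Y|}\, \Sigma_Y(g), \qquad \Sigma_Y(g) \;:=\; \sum_{a,b} \prod_{y \in Y} g(a+yb). \]
The term $Y = \emptyset$ contributes exactly $p^k T_k(\Omega)$, while the $|Y|=1$ terms combine to $p^{k-1}\sum_{a} g(a)\nu_a$; hence the hypothesis $T_k(f) \geq (1+\delta)\,p^k T_k(\Omega)$ rearranges to $p^{k-1}\sum_a g(a)\nu_a \geq \delta\,p^k T_k(\Omega) - R$, where $R := \sum_{|Y|\ge 2} p^{k-|Y|}\Sigma_Y(g)$. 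It then suffices to show $R = o(\delta p^k T_k(\Omega)) = o(\delta p^k N^2)$. Since this lemma is used in the proof of Theorem~\ref{thm:micro-rate-general} applied to $f$ already satisfying the entropy constraint \eqref{eq:micro-f-assumption}, I freely use $\sum_a g(a) \lesssim \delta p N$ from \eqref{eq:micro-g-sum-bound}.

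For $|Y|=2$ the bound is immediate: Lemma~\ref{lem:double-sum} gives $\Sigma_Y(g) \leq (k-1)(\sum g)^2 \lesssim \delta^2 p^2 N^2$, so the contribution is $O(\delta^2 p^k N^2) = o(\delta p^k N^2)$ since $\delta \to 0$ in the microscopic regime. For $|Y| \geq 3$ the naive bound is too weak once $\delta \gtrsim p$, so the main work is a thresholding argument. Set $\tau := p\,\delta^{-1/(k-2)}/\log(1/\delta)$, so that $\tau/p \to \infty$ while $(\tau/p)^{k-2}\delta \to 0$; note also that $\log(\tau/p) \asymp \log(1/\delta) \asymp \log(1/p)$, since the microscopic hypothesis forces $\delta = o(p^{(k-2)/3}\log^{2/3}(1/p))$. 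Split $g = g_{\leq \tau} + g_{>\tau}$ and expand each $\Sigma_Y(g)$ as $\sum_{S \subseteq Y} \sum_{a,b}\prod_{y\in S}g_{>\tau}(a+yb)\prod_{y\in Y\setminus S}g_{\leq \tau}(a+yb)$.

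The purely small sub-term ($S = \emptyset$) is bounded by pulling out $|Y|-2$ factors of $\tau$ and applying Lemma~\ref{lem:double-sum} to the remaining two, giving $\lesssim \tau^{|Y|-2}\delta^2 p^2 N^2$, which is $o(\delta p^k N^2)$ by the choice of $\tau$. For sub-terms with $|S|\geq 1$, I first derive the tail estimate $\sum g_{>\tau} \lesssim \delta^2 p N/\log(\tau/p)$ by combining the entropy constraint with Lemma~\ref{lem:entropy-est} (valid because $\tau/p \to \infty$). Each such sub-term is then handled by applying Lemma~\ref{lem:double-sum} to an appropriate pair of factors (two $g_{>\tau}$'s if $|S|\geq 2$, else one $g_{>\tau}$ and one $g$), bounding the remaining $|Y|-2$ factors by $\tau$ or by $1$, and inserting $\sum g \lesssim \delta p N$ together with the tail estimate for $g_{>\tau}$. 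The worst case is $|S|=|Y|=k$, which produces exactly the condition $\delta^3 = o(p^{k-2}\log^2(\tau/p))$, matching the microscopic hypothesis.

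The main obstacle will be the careful bookkeeping needed to verify that a single choice of $\tau$ makes every $(|Y|,|S|)$ sub-term bound go through; the microscopic hypothesis is tight and leaves essentially no slack for this case analysis. Both settings $\Omega = [N]$ and $\Omega = \mathbb{Z}/N\mathbb{Z}$ are covered uniformly via Lemma~\ref{lem:double-sum}.
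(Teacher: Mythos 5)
Your proposal is correct and follows essentially the same route as the paper: expand $T_k(p+g)$ multilinearly, isolate the $p^kT_k(\Omega)$ and $p^{k-1}\sum_a g(a)\nu_a$ terms, split $g$ at a threshold, control $\sum_a g_{>\tau}(a)$ via the entropy constraint (gaining the crucial $\log$ factor), and bound every remaining term through Lemma~\ref{lem:double-sum}. The only difference is cosmetic: the paper takes the fixed threshold $\tau=p^{3/4}$ (which already suffices for all cases), whereas you use a $\delta$-adapted threshold; both choices make the same case analysis close.
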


\begin{proof} Set threshold $\tau = p^{3/4}$.
As in Section~\ref{sec:macro-threshold}, write $g = g_{\le \tau} + g_{> \tau}$.
As in Lemma~\ref{lem:threshold}, we have by Lemma~\ref{lem:entropy-est} and \eqref{eq:micro-f-assumption},
\begin{equation}\label{eq:micro-g-large-sum-bound}
\sum_{a\in \Omega} g_{> \tau}(a) \log(1/p)
\asymp
\sum_{a\in \Omega}I_p(p+g_{> \tau}(a))
\le \sum_{a \in \Omega} I_p(f(a))
\le \delta^2 N p,
\end{equation}
thereby gaining an extra $\log(1/p)$ factor compared to \eqref{eq:micro-g-sum-bound}.  

By expanding, we have
\begin{equation}
	\label{eq:micro-T-expand}
	T_k(p+g_{\le \tau}+g_{> \tau}) = \sum_{X, Y, Z} T_{X,Y,Z}(p,g_{\le \tau},g_{> \tau}),
\end{equation}
where $T_{X,Y,Z}(p,g_{\le \tau},g_{> \tau})$ is the same as earlier \eqref{eq:T_XYZ}.
We say that $T_{X,Y,Z}(p,g_{\le \tau},g_{> \tau})$ \emph{contributes negligibly} to the sum \eqref{eq:micro-T-expand} if $T_{X,Y,Z}(p,g_{\le \tau},g_{> \tau}) = o(\delta N^2 p^k)$. We will show that unless $|X| = k-1$ or $k$, the term contributes negligibly.

Indeed, as in Section~\ref{sec:macro-threshold}, if $|Z| \ge 2$, then by Lemma~\ref{lem:double-sum} and \eqref{eq:micro-g-large-sum-bound},
\[
T_{X,Y,Z}(p,g_{\le \tau},g_{> \tau})
\le (k-1) \left(\sum_{a\in \Omega} g_{> \tau}(a)\right)^2
\lesssim \left( \frac{\delta^2 Np}{\log(1/p)}\right)^2
= o(\delta N^2p^k),
\]
where the final step uses the microscopic scale hypothesis $\delta^{-3}p^{k-2}\log^{2}(1/p) \rightarrow \infty$.

If $|Z| \le 1$ and $|Y\cup Z|\geq 2$, then  by Lemma~\ref{lem:double-sum} and \eqref{eq:micro-g-sum-bound},
\[
T_{X,Y,Z}(p,g_{\le \tau},g_{> \tau})
\le (k-1) \tau^{k-2} \left(\sum_{a \in \Omega} g(a) \right)^2
\lesssim
\tau^{k-2} \delta^2 p^2 N^2
=o(\delta N^2p^k),
\]
where the last step holds due to $\tau = o(p^{2/3}(\log(1/p))^{-2/(3k-6)})$ and the microscopic scale hypothesis on $\delta$.

This shows that the non-neglible contributions are those terms with $|X| = k-1$ or $k$. Hence
\begin{align*}
T_k(f) &= p^k T_k(\Omega) + p^{k-1} \sum_{x,y} \sum_{j=0}^{k-1} g(x+jy) + o(\delta p^k N^2)
\\
&= p^k T_k(\Omega) + p^{k-1} \sum_{a \in \Omega} g(a)\nu_a  + o(\delta p^k N^2),
\end{align*}
where the first sum runs over all pairs $(x,y)$ of elements in the ambient group such that $\{x,x+y,\dots,x+(k-1)y\} \subset \Omega$.
Since $T_k(f) \ge (1+\delta)p^k T_k(\Omega)$, we have $\sum_{a \in \Omega} g(a)\nu_a \ge (1 - o(1))\delta p T_k(\Omega)$, as required. 
\end{proof}

In the final step of the proof of Theorem~\ref{thm:micro-rate-general}, we convert the lower bound on the weighted sum of $g$ from the above lemma to a lower bound on the entropy $\sum_a I_p(p+g(a))$. We consider the two cases $\Omega = \Z/N\Z$ and $[N]$ separately.

\medskip

\emph{Case 1:} $\Omega = \Z/N\Z$. In this case, we have $\nu_a = kN$ for all $a \in \Omega$ and $T_k(\Omega) = N^2$. By convexity of $I_p(\cdot)$, \eqref{eq:micro-g-sum-lower-bd}, and Lemma~\ref{lem:entropy-est},
\begin{align*}
\sum_{a \in \Omega} I_p(p+g(a))
&\ge N I_p\left(p + \frac{1}{N} \sum_{a \in \Omega} g(a)\right) 
\\
&\ge
N I_p\left(p + (1-o(1))\frac{\delta p}{k}\right) 
\sim  \frac{\delta^2 Np}{2k^2}.
\end{align*}
This combined with Lemma \ref{lm:microexample}, proves Theorem~\ref{thm:micro-rate-general},  when $\Omega = \Z/N\Z$.

\medskip

\emph{Case 2:} $\Omega = [N]$. In this case, the quantities $\nu_a$ are unequal, and the solution requires an extra step. We use the estimate\footnote{\emph{Proof of estimate}: by Lemma~\ref{lem:entropy-est}, for $x \in [p^{1.1},\tfrac12 p^{0.9}]$ we have $I_p(p+x) \ge I_p(p+p^{1.1}) \sim \tfrac12 p^{1.2} \ge p^{0.3} x$, and for $x \ge \tfrac12 p^{0.9}$ we have $I_p(p+x) \sim x \log(x/p) \ge (1+o(1)) p^{0.3} x$.}
\begin{equation}\label{eq:ent-lin-est}
I_p(p+x) \ge (1+o(1)) p^{0.3} x \quad \text{uniformly for all } x \in [p^{1.1},1-p].
\end{equation}
It follows by \eqref{eq:ent-lin-est} and \eqref{eq:micro-f-assumption} that
\begin{align*}
\sum_{a \in \Omega} g_{>p^{1.1}}(a)
&\le (1+o(1)) p^{-0.3} \sum_{a \in \Omega} I_p(p+g_{>p^{1.1}}(a))
\\
&\le (1+o(1)) p^{-0.3} \sum_{a \in \Omega}I_p(f(a))
\le (1+o(1))\delta^2 p^{0.7} N
= o(\delta p N)
\end{align*}
where the final step, $\delta = o(p^{0.3})$, is due to the microscopic scale hypothesis. Since $\nu_a \asymp N$ for all $a \in \Omega$, the above estimate along with \eqref{eq:micro-g-sum-lower-bd} gives
\[
\sum_{a \in \Omega} g_{\le p^{1.1}} (a)\nu_a \ge (1-o(1))\delta pT_k(\Omega).
\]
By the Cauchy--Schwarz inequality, we have
\[
\sum_{a \in \Omega} g_{\le p^{1.1}} (a)^2 \ge (1-o(1))\frac{\delta^2 p^2T_k(\Omega)^2}{\sum_{a \in \Omega} \nu_a^2}.
\]
It follows by Lemma~\ref{lem:entropy-est} and the above estimate that
\[
\sum_{a \in \Omega} I_p(f(a))
\ge \sum_{a \in \Omega} I_p(p + g_{\le p^{1.1}}(a))
\sim \sum_{a \in \Omega} \frac{g_{\le p^{1.1}}(a)^2}{2p}
\ge (1+o(1)) \frac{\delta^2 T_k(\Omega)^2}{2\sum_{a \in \Omega} \nu_a^2},
\]
which  combined with Lemma \ref{lm:microexample} completes the proof of Theorem~\ref{thm:micro-rate-general}.

\subsection{Microscopic rate function} \label{sec:micro-rate}

In the case $\Omega=\Z/N\Z$, by symmetry, $\nu_s = kN$ for all $s \in \Omega$, and $T_k(\Omega) = N^2$. Thus Theorem~\ref{thm:micro-rate} for $\Omega = \Z/N\Z$ follows from Theorem~\ref{thm:micro-rate-general}.

When $\Omega = [N] \subset \Z$, the derivation of the formula in Theorem~\ref{thm:micro-rate} is routine though a bit more involved. For each $s \in [N]$ and $0 \le j < k$, let $\nu_{s,j}$ denote the number of pairs of $a,b \in \Z$ such that $a,a+(k-1)b \in [N]$ and $a + jb = s$, i.e., the number of $k$-APs (allowing zero or negative common difference) contained in $[N]$ and whose $(j+1)$-th term is $s$. It is easy to see that $\nu_{s,j}$ equals to the number of $b \in \Z$ satisfying $1 \le s + (k-1-j)b \le N$ and $1 \le s - jb \le N$, so that
\[
\nu_{s,j} = \min\left\{ \left\lfloor \frac{s-1}{j} \right\rfloor, \left\lfloor \frac{N-s}{k-1-j} \right\rfloor \right\}
+ \min\left\{ \left\lfloor \frac{s-1}{k-1-j} \right\rceil, \left\lceil \frac{N-s}{j} \right\rfloor \right\} + 1.
\]
Thus, for each $s \in [N]$,
\[
\nu_s = \sum_{j=0}^{k-1} \nu_{s,j} = k + 2\sum_{j=0}^{k-1}  \min\left\{ \left\lfloor \frac{s-1}{j} \right\rfloor, \left\lfloor \frac{N-s}{k-1-j} \right\rfloor \right\}.
\]
By Riemann sum, we have
\[
\lim_{N \to \infty} N^{-3} \sum_{s=1}^N \nu_s^2
= 4\sum_{i,j=0}^{k-1} \beta_{i,j},
\]
where
\[
\beta_{i,j} = \int_0^1 \min\left\{ \frac{x}{i}, \frac{1-x}{k-1-i}\right\}
\min\left\{ \frac{x}{j}, \frac{1-x}{k-1-j}\right\} \, dx.
\]
Observing that $\min\{x/i,(1-x)(k-1-i)\}$ is piecewise-linear with the kink at $x = i/(k-1)$, we can compute the above integral: for all $0 \le i \le j \le k-1$,
\begin{align*}
\beta_{ij}
&=
\int_0^{\frac{i}{k-1}} \frac{x^2}{ij} dx
+
\int_{\frac{i}{k-1}}^{\frac{j}{k-1}} \frac{(1-x)x}{(k-1-i)j} dx
+
\int_{\frac{j}{k-1}}^{1} \frac{(1-x)^2}{(k-1-i)(k-1-j)} dx
\\
&=
\frac{(k-1)^2-i^2-(k-1-j)^2}{6 (k-1)^2(k-1-i)j}.
\end{align*}
In particular, for all $0 \le i \le k-1$,
\[
\beta_{i,i} = \frac{1}{3(k-1)^2}.
\]
Therefore,
\[
\lim_{N \to \infty} N^{-3} \sum_{s=1}^N \nu_s^2
= 4\sum_{i,j=0}^{k-1} \beta_{i,j}
= 4 \sum_{i=0}^{k-1} \beta_{i,i}+ 8 \hspace{-.5em}\sum_{0 \le i < j < k} \beta_{i,j}
= \frac{\gamma_k}{(k-1)^2},
\]
where
\[
\gamma_k
= \frac{4}{3}\left(k + \sum_{0\le i < j < k}
\frac{(k-1)^2-i^2-(k-1-j)^2}{(k-1-i)j}\right).
\]
Since $T_k([N]) \sim N^2/(k-1)$ for fixed $k$ as $N \to \infty$, Theorem~\ref{thm:micro-rate} for $\Omega = [N]$ follows from Theorem~\ref{thm:micro-rate-general}.

As for the remark following Theorem~\ref{thm:micro-rate}, we always have
$\gamma_k \ge k^2$
due to \eqref{eq:micro-equal}. The first few values of $\gamma_k$ are
\[
\gamma_3= \frac{28}{3},
\quad
\gamma_4 = 17,
\quad
\gamma_5 = \frac{718}{27}.
\]
The asymptotic dependence of $\gamma_k$ on $k$ can be computed via a Riemann integral\footnote{The integral was computed using \textsc{Mathematica}.} (note that the integrand takes value in $[0,2]$ in the given domain):
\[
\lim_{k \to 0}\frac{\gamma_k}{k^2}
= \frac{4}{3}\int_{0\le x \le y \le 1} \hspace{-1em} \frac{1 - x^2 - (1-y)^2}{(1-x)y} \, dxdy
= \frac{40 - 2\pi^2}{9} \approx 1.14.
\]

\section{Replica symmetry} \label{sec:replica}

In this section, we record a partial result on exact replica symmetry for constant values of $p$ and $\delta$ in the case of $\Omega = \ZZ/N\ZZ$, analogous to results about dense random graphs in \cite{CV11,LZ15}. Unlike previous sections, where we solve the variational problem asymptotically as $p \to 0$, the following theorem gives \emph{exact} replica symmetry, i.e., we give sufficient conditions on constants $p$ and $\delta$ so that the constant function uniquely minimizes the variational problem. Unlike the results in \cite{LZ15}, we do not know if the following theorem gives the full replica symmetry phase (it probably does not). The proof is nearly identical to the one in \cite{LZ15}, the only difference being the H\"older-like inequality (Lemma~\ref{lem:ap-holder}) for $k$-APs.

Using arguments very similar to \cite{Zhao-lower}, one can also prove regions of replica symmetry in the lower tail. Details are omitted.

\begin{theorem}\label{thm:replica}
Take any $0 < p \le q < 1$, positive integer $k\ge 3$ and prime $N$. Suppose that $(q^{k/2}, I_p(q))$ lies on the convex minorant of the function $x \mapsto I_p(x^{2/k})$. Then the constant function $f \equiv q$ is the unique minimizer to the variational problem \eqref{eq:var} with $\Omega = \Z/N\Z$ and $(1 + \delta)p^k = q^k$, so that $\phi_p^{(k,\ZZ/N\ZZ)}(\delta) = N I_p(q)$.
\end{theorem}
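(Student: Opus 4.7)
The plan is to follow the template of \cite{LZ15}, replacing the graph-theoretic H\"older inequality with a $k$-AP analog (Lemma~\ref{lem:ap-holder}): for every $f \colon \Z/N\Z \to [0,1]$,
\[
T_k(f) \le \Bigl(\sum_{a} f(a)^{k/2}\Bigr)^2,
\]
with equality iff $f$ is constant. Primality of $N$ is used to ensure $k-1$ is invertible modulo $N$, so the substitution $(a,b) \mapsto (c,d) := (a,a+(k-1)b)$ is a bijection of $(\Z/N\Z)^2$; under it $T_k(f)$ becomes a sum of products $\prod_j f(\lambda_j c+\mu_j d)$ with $\lambda_j+\mu_j=1$ that can be bounded by iterated Cauchy--Schwarz (for $k=3$ a single Cauchy--Schwarz, for $k=4$ a paired Cauchy--Schwarz, and for general $k$ an inductive version).

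With this inequality in hand, the remainder is a one-dimensional convexity exercise. Set $h(x) := I_p(x^{2/k})$, and let $h^{**}$ denote its convex minorant; the hypothesis is $h^{**}(q^{k/2}) = h(q^{k/2}) = I_p(q)$. Since $I_p$ is strictly increasing on $[p,1]$ and $q \ge p$, both $h$ and $h^{**}$ are non-decreasing on $[p^{k/2},1]$. For any $f \colon \Z/N\Z \to [0,1]$ satisfying the constraint $T_k(f) \ge (1+\delta)p^k N^2 = q^k N^2$, the H\"older-like inequality gives $\bar x := \frac{1}{N}\sum_a f(a)^{k/2} \ge q^{k/2}$, whence
\[
\sum_a I_p(f(a)) = \sum_a h\bigl(f(a)^{k/2}\bigr) \ge \sum_a h^{**}\bigl(f(a)^{k/2}\bigr) \ge N\, h^{**}(\bar x) \ge N\, h^{**}(q^{k/2}) = N\, I_p(q),
\]
combining $h\ge h^{**}$, Jensen for the convex $h^{**}$, and monotonicity. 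The matching upper bound $\phi_p^{(k,\Z/N\Z)}(\delta) \le N I_p(q)$ is witnessed by the constant $f \equiv q$.

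For uniqueness, if the minimizer $f$ is constant, say $f \equiv q'$, then the constraint and strict monotonicity of $I_p$ on $[p,1]$ force $q' = q$. If instead $f$ is non-constant, the H\"older inequality is strict, so $\bar x > q^{k/2}$; a careful analysis of $h^{**}$ at the point $q^{k/2}$ (mirroring \cite{LZ15}) promotes this to a strict inequality somewhere in the chain above, contradicting minimality. The principal obstacle is the H\"older-type inequality itself for all $k \ge 3$: the cases $k=3,4$ admit a direct Cauchy--Schwarz proof, but for odd $k \ge 5$ one must exploit the linear-algebraic structure of $k$-APs in $\Z/N\Z$ more carefully to reduce the $k$-factor sum to something bilinear in the endpoints $c,d$ that can be controlled by $(\sum_a f(a)^{k/2})^2$.
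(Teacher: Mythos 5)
Your overall architecture matches the paper's: a H\"older-type inequality $T_k(f)\le\bigl(\sum_{a}f(a)^{k/2}\bigr)^{2}$ reduces the constraint to a one-variable convexity problem, and your convex-envelope/Jensen step is equivalent to the paper's tangent-line argument (since $J:=I_p(\cdot^{2/k})$ touches its convex minorant $J^{**}$ at $q^{k/2}$, the tangent line to $J$ there is a supporting line of $J^{**}$ and hence lies below $J$ everywhere). The problem is the key lemma itself, which you do not establish. You acknowledge an unfilled gap for $k\ge 5$, and in fact even the ``single Cauchy--Schwarz'' you propose for $k=3$ cannot work: any one application of Cauchy--Schwarz to $\sum_{a,b}f(a)f(a+b)f(a+2b)$ in the obvious ways produces $\|f\|_1\|f\|_2^2$, and by Cauchy--Schwarz again $\bigl(\sum_a f(a)^{3/2}\bigr)^2\le\|f\|_1\|f\|_2^2$, so the bound you would obtain is \emph{weaker} than the one you need and does not imply $\sum_a f(a)^{3/2}\ge q^{3/2}N$.

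The paper's Lemma~\ref{lem:ap-holder} handles all $k\ge 3$ uniformly with a \emph{single} $k$-term generalized H\"older inequality, via the cyclic decomposition
\[
f(a)f(a+b)\cdots f(a+(k-1)b)=\prod_{(i,j)}h_{i,j}(a,b),\qquad h_{i,j}(a,b):=\sqrt{f(a+ib)\,f(a+jb)},
\]
with $(i,j)$ ranging over the $k$ cyclically consecutive pairs $(0,1),(1,2),\dots,(k-2,k-1),(k-1,0)$, so that each index $i$ appears in exactly two factors and the square roots recombine. H\"older with all exponents equal to $k$ gives $T_k(f)\le\prod_{(i,j)}\bigl(\sum_{a,b}h_{i,j}(a,b)^k\bigr)^{1/k}$, and each factor equals $\bigl(\sum_a f(a)^{k/2}\bigr)^{2/k}$ because $(a,b)\mapsto(a+ib,a+jb)$ is a bijection of $(\Z/N\Z)^2$ whenever $\gcd(i-j,N)=1$; primality of $N$ (really just $\gcd(k-1,N)=1$) is used only for the wrap-around pair $(k-1,0)$. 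This cyclic-H\"older trick is the idea missing from your proposal; your substitution $(a,b)\mapsto(a,a+(k-1)b)$ followed by iterated Cauchy--Schwarz does not have an obvious route to the sharp exponent $k/2$. The same decomposition also cleanly settles the uniqueness you gesture at: when $q>p$, equality in the convexity chain forces $\sum_a f(a)^{k/2}=q^{k/2}N$, which together with the constraint forces equality in H\"older, hence proportionality of all the $h_{i,j}^k$, hence $f$ constant.
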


\begin{remark}
The hypothesis that $N$ is prime is mainly for convenience, and it is likely unnecessary here. For example, the proof shows that when $k$ is even, there is no requirement on $N$, and when $k$ is odd, $\gcd(k-1,N) = 1$ suffices.
\end{remark}

\begin{proof}
In the variational problem \eqref{eq:var}, suppose that $f \colon \Z/N\Z \to [0,1]$ satisfies $T_k(f) \ge (1+\delta)p^k N^2 = q^k N^2$.
By Lemma~\ref{lem:ap-holder} below, we have
\begin{equation}
	\label{eq:replica-power-ineq}
\sum_{a \in \Z/N\Z} f(a)^{k/2} \ge q^{k/2} N.
\end{equation}
Let $J(x) = I_p(x^{2/k})$, and $\ell(x) = J'(q^{k/2})(x - q^{k-2}) + I_p(q)$. Then $\ell$ is the tangent line to $J(x)$ at $x = q^{k/2}$, and by the convex minorant condition, we have $J(x) \ge \ell(x)$ for all $x \in [0,1]$. Since $I_p(x)$ is increasing in $x \in [p,1]$, we have $J'(q^{k/2}) \ge 0$. It follows that
\begin{align*}
\sum_{a \in \Omega} I_p(f(a))
&= \sum_{a \in \Omega} J(f(a)^{k/2})
\\
&\ge \sum_{a \in \Omega} \ell(f(a)^{k/2})
\\
&= J'(q^{k/2}) \left( \sum_{a \in \Omega} f(a)^{k/2} - q^{k/2} N\right) + I_p(q)
\\
&\ge I_p(q)
\end{align*}
by \eqref{eq:replica-power-ineq}. Equality occurs if and only if $f(a) = q$ for all $a \in \Z/N\Z$.
\end{proof}

\begin{lemma}\label{lem:ap-holder}
Let $k \ge 3$ and $N$ a prime.
For any $f \colon \Z/N\Z \to [0,\infty)$, one has
\[
T_k(f) \le \biggl(\sum_{a \in \Z/N\Z} f(a)^{k/2}\biggr)^2.
\]
\end{lemma}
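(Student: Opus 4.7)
The plan is to establish the following multilinear strengthening, from which the lemma follows by setting every $f_i$ equal to $f$:
\[
T_k(f_0, f_1, \ldots, f_{k-1}) := \sum_{a,b} f_0(a) f_1(a+b) \cdots f_{k-1}(a+(k-1)b) \le \prod_{i=0}^{k-1} \|f_i\|_{k/2},
\]
for all $f_i \colon \ZZ/N\ZZ \to [0,\infty)$. Specialising $f_0 = \cdots = f_{k-1} = f$ then gives $T_k(f) \le \|f\|_{k/2}^k = \bigl(\sum_c f(c)^{k/2}\bigr)^2$, which is the stated inequality.

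First I would produce $\binom{k}{2}$ elementary ``corner'' bounds. For each pair $0 \le I < J \le k-1$, the primality of $N$ ensures that $J - I$ is invertible in $\ZZ/N\ZZ$, so $(a,b) \mapsto (x,y) := (a+Ib, a+Jb)$ is a bijection on $(\ZZ/N\ZZ)^2$. Bounding every remaining factor $f_\ell$ (for $\ell \ne I, J$) by its sup norm then yields
\[
T_k(f_0, \ldots, f_{k-1}) \le \|f_I\|_1 \|f_J\|_1 \prod_{\ell \ne I,J} \|f_\ell\|_\infty,
\]
with sharp constant $1$. In the exponent space $(1/p_0, \ldots, 1/p_{k-1})$, each such corner corresponds to the vertex that is $1$ at coordinates $I$ and $J$ and $0$ elsewhere.

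The main step is multilinear Riesz--Thorin interpolation applied iteratively to these corners. Their uniform convex combination places every coordinate at $(k-1)/\binom{k}{2} = 2/k$, i.e., $p_i = k/2$ uniformly. Since every corner constant is $1$, each pairwise Riesz--Thorin step preserves the constant $1$ (as $1^{1-\theta}\cdot 1^\theta = 1$), and iterating the interpolation along a finite sequence of pairwise steps reaches the centroid, yielding $T_k(f_0, \ldots, f_{k-1}) \le \prod_i \|f_i\|_{k/2}$ with constant $1$.

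The main technical point I anticipate is verifying carefully that multilinear Riesz--Thorin in the discrete setting on $\ZZ/N\ZZ$ behaves as expected through an iterated sequence of pairwise interpolations, although this is standard harmonic-analysis machinery. As an elementary alternative for small $k$, the case $k = 3$ admits a direct proof via Young's convolution inequality, after rewriting $T_3(f) = \sum_u f(u)(f*f)(2u)$ (using that $2$ is invertible mod $N$) and applying H\"older then Young's; and for even $k$ one can reduce to $T_{k/2}(f^2)$ via Cauchy--Schwarz on the two halves of the AP followed by shift-invariance in the $a$-variable, which iteratively covers $k$ being a power of two and, via the $k=3$ base, all other even $k$.
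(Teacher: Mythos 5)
Your main argument via multilinear Riesz--Thorin is correct, but it takes a genuinely different and considerably heavier route than the paper. The paper's proof is a single, self-contained application of the $k$-term H\"older inequality: write each summand $\prod_{i=0}^{k-1} f(a+ib)$ as the cyclic product $\prod_{i} h_{i,i+1}(a,b)$ of the ``square-root'' factors $h_{i,j}(a,b) = \sqrt{f(a+ib)f(a+jb)}$ (indices cyclic, so the last factor is $h_{k-1,0}$), then apply H\"older with all exponents equal to $k$; each factor $\bigl(\sum_{a,b} h_{i,j}(a,b)^k\bigr)^{1/k}$ evaluates cleanly to $\bigl(\sum_a f(a)^{k/2}\bigr)^{2/k}$ using the bijectivity of $(a,b)\mapsto(a+ib,a+jb)$ (which needs only $\gcd(i-j,N)=1$ for the cycle edges, i.e., $\gcd(1,N)=\gcd(k-1,N)=1$). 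Your corner estimates are the same bilinear change-of-variables observation, and your proof can be seen as replacing the paper's explicit H\"older cycle with an abstract interpolation argument that locates the target at the centroid of the $\binom{k}{2}$ easy corners; both approaches in fact prove the same multilinear strengthening $T_k(f_0,\dots,f_{k-1})\le\prod_i\|f_i\|_{k/2}$. What the paper's proof buys is that it is elementary and short, using nothing beyond H\"older; what your proof buys is a conceptual explanation of \emph{why} the exponent $k/2$ appears (it is the barycenter of the trivial $L^1\times L^1\times L^\infty\times\cdots$ estimates). One caveat: you invoke multilinear Riesz--Thorin without proof, and while this is standard machinery that does go through on the finite group $\ZZ/N\ZZ$ (all functions are simple, iterated pairwise interpolation covers the convex hull), in a self-contained paper one would either cite it carefully or just give the H\"older cycle directly. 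A small inaccuracy in your side remark: the Cauchy--Schwarz reduction $T_k(f)\le T_{k/2}(f^2)$ together with the $k=3$ base handles $k\in\{2^j, 3\cdot 2^j\}$ but \emph{not} ``all other even $k$'' --- e.g.\ $k=10$ reduces to $k=5$, which is odd and $\ne 3$, so that alternative route does not close without the general argument.
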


\begin{proof}
Define
\[
h_{i,j} (a,b) = \sqrt{f(a+ib) f(a+jb)}.
\]
By H\"older's inequality, one has
\begin{align*}
T_k(f)
&= \sum_{a,b \in \Z/N\Z} f(a)f(a+b) \cdots f(a+(k-1)b)
\\
&=
 \sum_{a,b \in \Z/N\Z} h_{0,1}(a,b)h_{1,2}(a,b) \cdots h_{k-2,k-1}(a,b) h_{k-1,0}(a,b)
\\
&\le \prod_{(i,j) \in \{(0,1), (1,2), \dots, (k-2,k-1),(k-1,0)\}} \left(\sum_{a,b \in \Z/N\Z} h_{i,j}(a,b)^k\right)^{1/k}
\\
&= \biggl(\sum_{a \in \Z/N\Z}f(a)^{k/2}\biggr)^2,
\end{align*}
where the last step is due to
\begin{align*}
\sum_{a,b \in \Z/N\Z} h_{ij}(a,b)^k
&= \sum_{a,b \in \Z / N\Z} f(a+ib)^{k/2}f(a+jb)^{k/2}
\\
&= \biggl( \sum_{a\in\Z/N\Z}f(a)^{k/2}\biggr)^2 \qquad \text{if } \gcd(i-j,N) = 1.
\end{align*}
Note that when $k$ is even, we can instead write
\[
f(a)f(a+b)\cdots f(a+(k-1)b)
= h_{0,1}(a,b)^2 h_{2,3}(a,b)^2 \cdots h_{k-2,k-1}(a,b)^2
\]
and remove the need for primality hypothesis on $N$.
\end{proof}

\section{Maximizing the number of $k$-APs}\label{sec:kAPmax}

In this section we prove Theorem~\ref{thm:maxAP}, repeated below for convenience.

\begin{reptheorem}{thm:maxAP}
Fix a positive integer $k \geq 3$. There exists some constant $c_k > 0$ such that the following statement holds. Let $A \subset \Z$ be a subset with $|A| = n$, or $A \subset \Z/N\Z$ with $N$ prime and $|A| = n \leq c_kN$. Then $T_k(A) \leq T_k([n])$.
\end{reptheorem}

In Section~\ref{sec:kAPmax-Z}, we will prove Theorem~\ref{thm:maxAP} when $A \subset \Z$ using a simple combinatorial argument. Unfortunately this proof does not extend to the case $A \subset \Z/N\Z$, due to the lack of a natural ordering in $\Z/N\Z$. Following the idea in~\cite{GS08}, we will attempt to replace the original set $A \subset \Z/N\Z$ by a Freiman model $\widetilde{A} \subset \Z$ (so that in particular $|A| = |\widetilde{A}|$ and $T_k(A) = T_k(\widetilde{A})$). This technique, called \emph{rectification}, was first investigated in~\cite{BLR98}. The following lemma gives a simple example of rectification:

\begin{lemma}\label{lem:simple-rect}
Let $N$ be a positive integer. Let $\widetilde{A} \subset \Z \cap (-N/4, N/4)$ be a subset, and let $A \subset \Z/N\Z$ be the image of $\widetilde{A}$ under the natural projection $\Z \to \Z/N\Z$. Then $T_k(A) = T_k(\widetilde{A})$.
\end{lemma}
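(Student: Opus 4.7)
The plan is to exhibit a bijection between the set of pairs $(\widetilde a, \widetilde b) \in \ZZ^2$ with $\widetilde a + j \widetilde b \in \widetilde A$ for every $0 \le j \le k-1$ and the analogous set of pairs in $(\ZZ/N\ZZ)^2$ for $A$. Since $\widetilde A \subset (-N/4, N/4)$ has diameter $<N/2 < N$, the natural projection $\pi \colon \widetilde A \to A$ is a bijection; this will be the key fact underlying both directions.

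For the forward direction I would take the reduction map $(\widetilde a, \widetilde b) \mapsto (\widetilde a \bmod N, \widetilde b \bmod N) =: (a,b)$. It clearly sends $\widetilde A$-progressions to $A$-progressions. Injectivity requires checking that neither coordinate can collide: two lifts of $a$ both lie in $(-N/4, N/4)$ and agree mod $N$, hence are equal; meanwhile $\widetilde b = (\widetilde a + \widetilde b) - \widetilde a$ is a difference of elements of $\widetilde A$, so it lies in $(-N/2, N/2)$ and is therefore determined by its residue mod $N$.

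For surjectivity, given $(a,b) \in (\ZZ/N\ZZ)^2$ with $c_j := a + jb \in A$ for all $0 \le j \le k-1$, let $\widetilde c_j \in \widetilde A$ be the unique lift supplied by the bijection $\pi$. The main step — and conceptually the only nontrivial point — is to verify that $\widetilde c_0, \widetilde c_1, \dots, \widetilde c_{k-1}$ actually forms an arithmetic progression in $\ZZ$, not merely in $\ZZ/N\ZZ$. For this, note that each consecutive difference $\widetilde c_{j+1} - \widetilde c_j$ lies in $(-N/2, N/2)$ since both terms lie in $(-N/4, N/4)$, and all these differences reduce to $b$ mod $N$. Since there is a unique representative of $b$ in $(-N/2, N/2]$, the consecutive differences must all be equal, so $\widetilde c_j$ is indeed an AP in $\ZZ$ with common difference, say, $\widetilde b$. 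Then $(\widetilde a, \widetilde b) := (\widetilde c_0, \widetilde b)$ is the desired preimage.

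The two maps above are inverse to each other, so they are bijections, yielding $T_k(A) = T_k(\widetilde A)$. The only place where the quantitative hypothesis $\widetilde A \subset (-N/4, N/4)$ enters is in forcing both the elements of $\widetilde A$ and their pairwise differences to have unique representatives modulo $N$; a weaker bound such as $\widetilde A \subset [0, N/2)$ would not suffice for the differences, which is why the symmetric window around $0$ is used.
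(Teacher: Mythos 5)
Your proof is correct. It is a direct, hands-on version of the argument that the paper packages more compactly: the paper observes that the projection $\pi\colon\widetilde A\to A$ is a \emph{Freiman $2$-isomorphism} (i.e.\ $a_1+a_2=a_3+a_4 \iff \pi(a_1)+\pi(a_2)=\pi(a_3)+\pi(a_4)$, which holds because $a_1+a_2-a_3-a_4 \in (-N,N)$), and then appeals to the standard fact that a Freiman $2$-isomorphism preserves $k$-term APs in both directions, since an AP is characterized by the additive quadruple relations $c_{j-1}+c_{j+1}=2c_j$. You instead build the bijection on AP-witnessing pairs $(a,b)$ explicitly, verifying injectivity via the uniqueness of representatives of $\widetilde a$ in $(-N/4,N/4)$ and of $\widetilde b$ in $(-N/2,N/2)$, and surjectivity by lifting each term and checking that all consecutive differences land in $(-N/2,N/2)$ and hence agree. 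The two proofs rest on the identical quantitative mechanism (diameter of $\widetilde A$ less than $N/2$), so the difference is one of presentation: your version is self-contained and avoids the Freiman-isomorphism vocabulary, while the paper's is shorter because it delegates the ``progressions reduce to quadruple relations'' step to known machinery. One small remark: you could have stopped after showing the reduction map on pairs $(\widetilde a,\widetilde b)\mapsto(a,b)$ is a bijection --- injectivity and surjectivity as you prove them are enough, without separately asserting that the forward and backward maps are inverse to each other, though that observation is harmless and follows from your construction.
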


\begin{proof}
Let $\pi\colon \widetilde{A} \to A$ be the natural projection map. We need to show that both $\pi$ and $\pi^{-1}$ preserve $k$-APs. It suffices to show that $\pi$ is a \emph{Freiman isomorphism}, in the sense that for any $a_1,a_2,a_3,a_4 \in \widetilde{A}$, $a_1+a_2 = a_3+a_4$ if and only if $\pi(a_1) + \pi(a_2) = \pi(a_3) + \pi(a_4)$. The only if direction is clear. The if direction follows from the fact that
\[  -N < a_1 + a_2 - a_3 - a_4 < N \]
whenever $a_1,a_2,a_3,a_4 \in \widetilde{A}$.
\end{proof}

A more sophisticated rectification lemma is given in~\cite[Theorem 4.1]{GS08}, which allows us to prove Theorem~\ref{thm:maxAP} when the set $A$ has small doubling, in the sense that $|A+A|/|A|$ is small (see Lemma~\ref{lem:kAPmax-small-doubling} below). After stating some preparatory lemmas in Section~\ref{sec:kAPmax-prep}, we will then prove the general case of Theorem~\ref{thm:maxAP} in Section~\ref{sec:kAPmax-Z/N} using a structural decomposition theorem~\cite[Proposition 3.2]{GS08}, which allows us to deduce that if $T_k(A)$ is close to maximal then $A$ must have small doubling.

\subsection{Proof of Theorem~\ref{thm:maxAP} when $A \subset \Z$}\label{sec:kAPmax-Z}
In this subsection we prove the case when $A \subset \Z$ by induction on $k$.\footnote{We are grateful to Anton Bankevich for suggesting the proof in this subsection.} The statement is trivial when $k=2$ since $T_k(A) = n^2$ always. Now let $k \geq 3$, and assume that the statement has been proved for smaller values of $k$. It is convenient to count the number of nontrivial increasing $k$-APs in $A$:
\[ T_k'(A) = \#\{(a,b): b>0, a,a+b,\dots,a+(k-1)b \in A\}. \]
Clearly $T_k(A) = 2T_k'(A) + n$. Thus it suffices to show that $T_k'(A) \leq T_k'([n])$. Arrange the elements in $A$ in increasing order:
\[ A = \{a_0 < a_1 < \dots < a_{n-1}\}. \]
Choose $m \in \Z$ such that
\[ \frac{k-2}{k-1}(n-1) \leq m < \frac{k-2}{k-1} n + 1. \]
There are two types of $k$-APs counted in $T_k'(A)$: those whose second largest element is at least $a_m$, and those whose second largest element is smaller than $a_m$. If the second largest element of a $k$-AP in $A$ is $a_i$ for some $m \leq i < n$, then there are at most $n-1-i$ possibilities for its largest element. Thus the number of $k$-APs in $A$ of the first type is at most
\[ \sum_{m \leq i < n} (n-1-i). \]
For $k$-APs in $A$ of the second type, their first $k-1$ terms form a $(k-1)$-AP in $\{a_0,\dots,a_{m-1}\}$. Thus the number of $k$-APs in $A$ of the second type is at most
\[ T_{k-1}'(\{a_0,\dots,a_{m-1}\}) \leq T_{k-1}'(\{0,1,\dots,m-1\}) \]
by induction hypothesis. It follows that
\[ T_k'(A) \leq  \sum_{m \leq i < n} (n-1-i) + T_{k-1}'(\{0,1,\dots,m-1\}). \]
To conclude the proof, we claim that the first term on the right hand side above is equal to the number of $k$-APs in $\{0,1,\dots,n-1\}$ of the first type:
\begin{equation}\label{eq:maxAP-Z-typeI}
\sum_{m \leq i < n} (n-1-i) = \#\{(a,b): b>0, a \geq 0, a+(k-1)b < n, a+(k-2)b \geq m\},
\end{equation}
and the second term is equal to the number of $k$-APs in $\{0,1,\dots,n-1\}$ of the second type:
\begin{equation}\label{eq:maxAP-Z-typeII}
T_{k-1}'(\{0,1,\dots,m-1\}) = \#\{(a,b): b>0, a \geq 0, a+(k-1)b < n, a+(k-2)b < m\}.
\end{equation}
To prove~\eqref{eq:maxAP-Z-typeI}, it suffices to show that for any $m \leq i < n$ we have
\[ n-1-i = \#\{(a,b): b>0, a \geq 0, a+(k-1)b < n, a+(k-2)b =i \}. \]
This follows from the fact that any choice of the value of $j = a+(k-1)b$ from $\{i+1,\dots,n-1\}$ uniquely determines an admissible $(a,b)$ since
\[ i \geq (k-2)(j-i) \]
by our choice of $m$. To prove~\eqref{eq:maxAP-Z-typeII}, note that any $(a,b)$ with $b > 0$, $a \geq 0$, and $a+(k-2)b < m$ automatically satisfies $a+(k-1)b < n$ since
\[ a+(k-1)b = \frac{k-1}{k-2}\left[a + (k-2)b\right] - \frac{a}{k-2} \leq \frac{k-1}{k-2}(m-1) < n \]
by our choice of $m$.

\subsection{Proof of Theorem~\ref{thm:maxAP} when $A \subset \Z/N\Z$: preparations}\label{sec:kAPmax-prep}

In this subsection, we collect a few lemmas that will allow us to reduce the problem in $\Z/N\Z$ to the simpler one in $\Z$. From now on we fix some large prime $N$ and work in $\Z/N\Z$. To begin with, we show that $k$-AP counts are controlled by additive energy. For two subsets $A, B \subset \Z/N\Z$, the additive energy $E(A,B)$ is defined by
\[ E(A,B) = \#\{(a,a',b,b') \in A \times A \times B \times B: a+b = a'+b'\}, \]
and note the trivial bound
\[ E(A, B) \leq \min(|A|^2|B|, |A||B|^2) \leq |A|^{3/2}|B|^{3/2}. \]
We will also consider additive energy of dilates $\ell \cdot A$ for a positive integer $\ell$, defined by
\[ \ell \cdot A = \{ \ell a: a \in A\}. \]
For subsets $A_1, \dots, A_k \subset \Z/N\Z$, define the asymmetric $k$-AP count by
\[ T_k(A_1, \dots, A_k) = \#\{(a,b): a \in A_1, a+b \in A_2, \dots, a+(k-1)b \in A_k\}. \]
Clearly if $A_1 = \dots = A_k = A$ then $T_k(A, \dots, A) = T_k(A)$.

\begin{lemma}\label{lem:kAP-energy}
Let $A_1, \dots, A_k \subset \Z/N\Z$ be subsets, and let $n = \max(|A_1|, \dots, |A_k|)$. If
\[ \max_{\ell, \ell' \in \{1,2\}} E(\ell \cdot A_i, \ell' \cdot A_{i+1}) \leq \varepsilon n^3 \]
for some $\varepsilon \in (0,1)$ and $1 \leq i < k$, then $T_k(A_1, \dots, A_k) \leq \varepsilon^{1/6}n^2$.
\end{lemma}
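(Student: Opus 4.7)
My strategy is to reduce the general $k$-AP count to the case $k = 3$ by a combinatorial injection, and then use a single Cauchy--Schwarz to produce the additive energy. For any $j \in \{1, \ldots, k-2\}$, the map $(a, b) \mapsto (a + (j-1)b, b)$ is injective and sends any pair counted by $T_k(A_1, \ldots, A_k)$ to a pair counted by $T_3(A_j, A_{j+1}, A_{j+2})$, because the three terms $a + (j-1)b,\, a + jb,\, a + (j+1)b$ form a 3-AP with common difference $b$ lying in $A_j \times A_{j+1} \times A_{j+2}$. Hence $T_k(A_1, \ldots, A_k) \leq T_3(A_j, A_{j+1}, A_{j+2})$.

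I would then prove the following estimate for $B_1, B_2, B_3 \subset \ZZ/N\ZZ$ with $N$ an odd prime:
\[
T_3(B_1, B_2, B_3) \leq |B_1|^{1/2} \, E(2 \cdot B_2, B_3)^{1/2}
\quad\text{and}\quad
T_3(B_1, B_2, B_3) \leq |B_3|^{1/2} \, E(B_1, 2 \cdot B_2)^{1/2},
\]
the second following from the first by reversing the 3-AP and using $T_3(B_1, B_2, B_3) = T_3(B_3, B_2, B_1)$. For the first bound, for each fixed $a$ the inner sum $\sum_b 1_{B_2}(a+b) 1_{B_3}(a+2b)$ can be rewritten via the substitution $u = a + 2b$ (legitimate since $2$ is invertible mod $N$) as the convolution $(1_{2 \cdot B_2} * 1_{-B_3})(a)$. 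Cauchy--Schwarz applied to $T_3(B_1, B_2, B_3) = \sum_a 1_{B_1}(a) (1_{2 \cdot B_2} * 1_{-B_3})(a)$, together with the identity $\|1_{2 \cdot B_2} * 1_{-B_3}\|_2^2 = E(2 \cdot B_2, -B_3) = E(2 \cdot B_2, B_3)$, then yields the stated bound.

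To conclude, take $j = i$ when $i \leq k - 2$ and $j = k - 2$ when $i = k - 1$, so that $\{A_i, A_{i+1}\}$ occupies two consecutive slots of the triple $(A_j, A_{j+1}, A_{j+2})$. The 3-AP estimate produces an energy of the form $E(\ell \cdot A_i, \ell' \cdot A_{i+1})$ with $\ell, \ell' \in \{1, 2\}$, which is at most $\varepsilon n^3$ by hypothesis, and combining with the trivial bound $|A_{\cdot}|^{1/2} \leq n^{1/2}$ gives
\[
T_k(A_1, \ldots, A_k) \leq n^{1/2} (\varepsilon n^3)^{1/2} = \varepsilon^{1/2} n^2 \leq \varepsilon^{1/6} n^2,
\]
the last inequality using $\varepsilon \in (0, 1)$. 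I do not anticipate a serious obstacle; indeed this approach yields the stronger exponent $1/2$, and the looser $1/6$ in the statement likely reflects a more symmetric Fourier-analytic proof the authors prefer, perhaps one that treats all four dilation pairs $(\ell, \ell')$ uniformly at the cost of extra H\"older losses. The only care points are invertibility of $2$ modulo $N$ (ensured since $N$ is a prime, tacitly $N > 2$) and checking that both boundary values $i = 1$ and $i = k - 1$ are handled by the two choices of $j$.
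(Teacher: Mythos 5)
Your proof is correct, and it takes a genuinely different route from the paper's for the key $T_3$ estimate. The first reduction is identical: the paper also bounds $T_k(A_1,\dots,A_k)$ by either $T_3(A_{i-1},A_i,A_{i+1})$ or $T_3(A_i,A_{i+1},A_{i+2})$, exactly matching your two choices of $j$. The paper then invokes \cite[Lemma 4.2]{GS08} as a black box, which is where the exponent $1/6$ comes from. You instead prove the $T_3$ bound directly: rewriting the inner sum as $(1_{2\cdot B_2}*1_{-B_3})(a)$ via the substitution $u=a+2b$ (legitimate since $N$ is an odd prime in this section), one Cauchy--Schwarz and the identity $\|1_B*1_{-C}\|_2^2 = E(B,C)$ give $T_3(B_1,B_2,B_3)\le |B_1|^{1/2}E(2\cdot B_2,B_3)^{1/2}$. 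This yields the stronger conclusion $T_k\le \varepsilon^{1/2}n^2$, from which $\varepsilon^{1/6}n^2$ follows since $\varepsilon\in(0,1)$. A further small gain: you use only one of the four energies $E(\ell\cdot A_i,\ell'\cdot A_{i+1})$ in the hypothesis ($(\ell,\ell')=(1,2)$ when $i\le k-2$, $(2,1)$ when $i=k-1$), whereas the cited Green--Sisask lemma's symmetric Fourier-analytic argument presumably needs control of several of them, which likely accounts for the weaker $1/6$ exponent. The only hygiene point you already flagged correctly: invertibility of $2$ mod $N$, guaranteed by primality.
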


\begin{proof}
Note that $T_k(A_1, \dots, A_k)$ is trivially bounded by either $T_3(A_{i-1}, A_i, A_{i+1})$ or $T_3(A_i, A_{i+1}, A_{i+2})$. The conclusion then follows immediately from~\cite[Lemma 4.2]{GS08}.
\end{proof}

The following lemma shows that Theorem~\ref{thm:maxAP} holds if $A$ has small doubling.

\begin{lemma}\label{lem:kAPmax-small-doubling}
Let $K \geq 1$. The following statement holds if $c$ is sufficiently small in terms of $K$. Let $A \subset \Z/N\Z$ be a subset with $|A| = n \leq cN$ and $|A+A| \leq K|A|$. Then $T_k(A) \leq T_k([n])$.
\end{lemma}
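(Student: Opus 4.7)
The plan is to reduce the problem to the integer case already proved in Section~\ref{sec:kAPmax-Z} by \emph{rectifying} $A$ into $\Z$. The small-doubling hypothesis $|A+A| \le K|A|$ combined with $|A| \le cN$ (for $c$ small in terms of $K$) is precisely the setting in which the rectification machinery of Bilu--Lev--Ruzsa~\cite{BLR98}, as refined in~\cite[Theorem 4.1]{GS08}, applies. Invoking that result yields a Freiman $2$-isomorphism $\phi \colon A \to \widetilde{A}$ for some set $\widetilde{A} \subset \Z$ with $|\widetilde{A}| = |A| = n$.

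Next, I would observe that a Freiman $2$-isomorphism preserves $k$-AP counts, so $T_k(A) = T_k(\widetilde{A})$. Indeed, a $k$-term arithmetic progression $a_0, a_1, \dots, a_{k-1}$ is characterized by the $k-2$ equations $a_i + a_{i+2} = 2a_{i+1}$ for $0 \le i \le k-3$, each of which is of the form $x+y = z+w$ and is therefore preserved in both directions by a Freiman $2$-isomorphism. One can also use the trivial $k$-APs (those with common difference $0$) without issue, as these are already counted by $|A|$ and are preserved by any bijection.

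Finally, applying the $\Omega = \Z$ case of Theorem~\ref{thm:maxAP} (already established in Section~\ref{sec:kAPmax-Z}) to the set $\widetilde{A} \subset \Z$, we conclude
\[
T_k(A) = T_k(\widetilde{A}) \le T_k([n]),
\]
as desired.

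The only real content of the argument is the rectification step, which is cited rather than reproved; the dependence of $c$ on $K$ in the lemma statement is exactly that of the cited rectification result. The remaining verification (that Freiman $2$-isomorphisms commute with the $k$-AP counting functional) is purely formal. Thus the main conceptual obstacle is that the hypothesis $|A| \le cN$ with $c = c(K)$ is indispensable: without it, one cannot expect to embed $A$ faithfully into $\Z$, as can be seen by considering coset-like examples such as $A = H$ for a subgroup $H \le \Z/N\Z$ of comparable size to $N$ (relevant only when $N$ is composite, but illustrating why the cutoff is needed).
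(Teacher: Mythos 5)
Your proof is correct and follows essentially the same approach as the paper: use the small-doubling hypothesis together with the rectification result \cite[Theorem 4.1]{GS08} to embed $A$ faithfully into $\Z$, then apply the integer case of Theorem~\ref{thm:maxAP}. The paper phrases the rectification concretely as a dilate $d \cdot A$ contained in an interval of length at most $N/2k$ and then checks directly that $k$-APs lift to $\Z$ without wrap-around, whereas you package the same fact as a Freiman $2$-isomorphism and note that such maps preserve $k$-AP counts via the chain of $3$-AP relations $a_i + a_{i+2} = 2a_{i+1}$; these are two presentations of the same argument.
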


\begin{proof}
By~\cite[Theorem 4.1]{GS08}, there exisits a dilate $d \cdot A$ of $A$ (for some $d \in (\Z/N\Z)^*$) that is contained in an interval of length at most $N/2k$. After dilation and translation we may assume that $A \subset [1,N/2k]$. Now that any $k$-AP in $A$ (as a subset of $\Z/N\Z$) is also a $k$-AP in $\Z$, we may apply the $A \subset \Z$ case of Theorem~\ref{thm:maxAP} to conclude that $T_k(A) \leq T_k([n])$, as desired.
\end{proof}

\subsection{Proof of Theorem~\ref{thm:maxAP} when $A \subset \Z/N\Z$}\label{sec:kAPmax-Z/N}

We continue to work in $\Z/N\Z$. In view of Lemma~\ref{lem:kAPmax-small-doubling}, we may assume that $n$ is sufficiently large in terms of $k$. We first establish a rough structure theorem for sets with close to maximal number of $k$-APs.

\begin{lemma}\label{lem:kap-rough-structure}
Let $c > 0$ be sufficiently small. Let $A \subset \Z/N\Z$ be a subset with $|A| = n \leq cN$, and suppose that $n$ is sufficiently large in terms of $k$. If
\[ T_k(A) \geq \frac{n^2}{k-1} \left(1 - \frac{1}{100k^2}\right), \]
then there exists a dilate $d \cdot A$ of $A$ for some $d \in (\Z/N\Z)^*$, such that all but at most $n/10k^2$ elements in $d \cdot A$ lies in an interval of length at most $N/100k$.
\end{lemma}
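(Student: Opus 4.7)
My plan is to argue the contrapositive, combining the asymmetric AP-energy bound (Lemma~\ref{lem:kAP-energy}) with the structural decomposition of Green--Sisask~\cite[Proposition~3.2]{GS08} and the $\Z$-case of Theorem~\ref{thm:maxAP} already proved in Section~\ref{sec:kAPmax-Z}. For any parameter $\varepsilon > 0$, the Green--Sisask decomposition produces a splitting $A = A_1 \sqcup A_2$ with the property that, after a suitable dilate $d \in (\Z/N\Z)^\ast$, the structured part $d\cdot A_1$ lies in an interval of length $\le N/100k$ while the residual part $A_2$ satisfies
\[
\max_{\ell,\ell' \in \{1,2\}} E(\ell \cdot A_2,\ \ell'\cdot A_2) \le \varepsilon\, n^3.
\]

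Suppose the conclusion of Lemma~\ref{lem:kap-rough-structure} fails for every dilate; then necessarily $|A_2| \ge n/(10k^2)$. A standard Cauchy--Schwarz estimate on the Fourier side gives $E(X,Y)^2 \le E(X,X)\,E(Y,Y)$ for any subsets $X, Y$, and combined with the trivial bound $E(\ell\cdot A_1,\ell\cdot A_1) \le |A_1|^3 \le n^3$ this yields $E(\ell\cdot A_1,\ell'\cdot A_2) \le \sqrt\varepsilon\, n^3$ for all $\ell,\ell'\in\{1,2\}$. In short, every energy involving $A_2$ is at most $\sqrt\varepsilon\, n^3$.

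Now expand
\[
T_k(A) = \sum_{(I_1,\ldots,I_k) \in \{1,2\}^k} T_k(A_{I_1},\ldots,A_{I_k}).
\]
The pure term $T_k(A_1,\ldots,A_1)$ counts $k$-APs in $d\cdot A_1$, which sits in an interval of length $<N/2k$; any such $k$-AP of $\Z/N\Z$ thus lifts uniquely to a $k$-AP of $\Z$ by the Freiman-isomorphism argument of Lemma~\ref{lem:simple-rect}, so the $\Z$-case of Theorem~\ref{thm:maxAP} gives $T_k(A_1,\ldots,A_1) \le T_k([|A_1|]) \le |A_1|^2/(k-1) + O(1)$. For each of the other $2^k - 1$ tuples, at least one coordinate is $2$, hence some consecutive pair $(I_j,I_{j+1})$ involves $A_2$, and Lemma~\ref{lem:kAP-energy} then bounds $T_k(A_{I_1},\ldots,A_{I_k}) \le \varepsilon^{1/12}\,n^2$. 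Summing, and using $|A_1| \le n(1 - 1/(10k^2))$, I obtain
\[
T_k(A) \le \frac{n^2}{k-1}\Bigl(1 - \tfrac{1}{5k^2}\Bigr) + 2^k \varepsilon^{1/12}\, n^2 + O(1),
\]
which is strictly less than $\frac{n^2}{k-1}\bigl(1 - \tfrac{1}{100k^2}\bigr)$ once $\varepsilon$ is chosen sufficiently small in terms of $k$ and $n$ is large, contradicting the hypothesis.

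The main obstacle is invoking \cite[Proposition~3.2]{GS08} in exactly the form above --- producing a decomposition whose residual piece $A_2$ is simultaneously pseudorandom with respect to both dilates $\ell \in \{1,2\}$, with parameters tunable in $k$. Once the decomposition is in hand, the remaining energy accounting, rectification, and summation all use only ingredients already assembled in Sections~\ref{sec:kAPmax-Z} and \ref{sec:kAPmax-prep}.
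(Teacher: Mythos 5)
Your outline uses the right toolkit — the Green--Sisask structural decomposition, the energy bound of Lemma~\ref{lem:kAP-energy} to discard off-diagonal terms, rectification and the $\Z$-case of Theorem~\ref{thm:maxAP} — but there is a genuine gap in the form of decomposition you assume as input. You invoke \cite[Proposition~3.2]{GS08} as producing a splitting $A = A_1 \sqcup A_2$ where, after a single dilation, the structured part $d\cdot A_1$ already lies in an interval of length $\le N/100k$ and $A_2$ is pseudorandom. That is not what the proposition says, and it is not true in general: it gives a decomposition $A = A_1 \cup \cdots \cup A_m \cup A_0$ into \emph{several} pieces $A_i$, each of size $\gtrsim_\varepsilon |A|$ with small doubling $|A_i + A_i| \lesssim_{\varepsilon,\varepsilon'} |A_i|$, mutually pseudorandom, plus a residual $A_0$ pseudorandom against $A$. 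Each $A_i$ separately can be rectified into an interval (possibly requiring a \emph{different} dilate for each), but a priori there is nothing forcing a single $A_i$ to carry most of the mass. Indeed, $A$ could be a union of two far-apart intervals of size $n/2$ each; then no single dilation puts more than half of $A$ into a short interval, so the two-piece decomposition you assume simply does not exist for such $A$.

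The step you are skipping is precisely where the near-maximality hypothesis $T_k(A) \geq \frac{n^2}{k-1}(1 - \frac{1}{100k^2})$ earns its keep. After the $m^k$-term expansion of $T_k(A)$ over the pieces and the $k$ error terms with $A_0$, the energy bounds leave
\[ T_k(A) \le \sum_{i=1}^m T_k(A_i) + \frac{n^2}{100k^3}, \]
and applying the small-doubling lemma (Lemma~\ref{lem:kAPmax-small-doubling}) to each $A_i$ gives $\sum_i T_k(A_i) \le \frac{1}{k-1}\sum_i |A_i|^2 + O_k(m) \le \frac{n}{k-1}\max_i |A_i| + O_k(m)$. Combined with the hypothesis, this \emph{forces} $\max_i |A_i| \ge (1 - \frac{1}{10k^2})n$, and only then can one rectify that dominant $A_i$ into a short interval via \cite[Theorem~4.1]{GS08}. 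Your contrapositive formulation assumes this dominance from the start rather than deriving it. You do flag the decomposition as ``the main obstacle,'' but you misdiagnose it: the simultaneous pseudorandomness with respect to dilates $\ell \in \{1,2\}$ is fine (Green--Sisask does supply it); what is missing is the single-structured-piece form, which must be \emph{earned} from the extremal hypothesis, not assumed.
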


\begin{proof}
Choose a sufficiently small $\varepsilon > 0$, and then choose some $\varepsilon' > 0$ sufficiently small in terms of $\varepsilon$. Apply~\cite[Proposition 3.2]{GS08} to obtain a structural decomposition $A = A_1 \cup \dots \cup A_m \cup A_0$ into disjoint subsets satisfying the following properties:
\begin{enumerate}
\item $|A_i| \gtrsim_{\varepsilon} |A|$ for each $1 \leq i \leq m$, so that $m\lesssim_{\varepsilon} 1$;
\item $|A_i+A_i| \lesssim_{\varepsilon,\varepsilon'} |A_i|$ for each $1 \leq i \leq m$;
\item $E(\ell \cdot A_i,\ell' \cdot A_j) \leq \varepsilon' |A_i|^{3/2} |A_j|^{3/2}$ whenever $1 \leq i < j \leq m$ and $\ell, \ell' \in \{1,2\}$;
\item $E(\ell \cdot A_0, \ell' \cdot A) \leq \varepsilon |A|^3$ whenever $\ell, \ell' \in \{1,2\}$.
\end{enumerate}
To estimate $T_k(A)$, we write it as the sum of $m^k$ terms of the form $T_k(A_{i_1}, \dots, A_{i_k})$ for some $1 \leq i_1, \dots, i_k \leq m$, and the $k$ terms $T_k(A_0,A,\dots,A), T_k(A,A_0,\dots,A), \dots, T_k(A,A,\dots,A_0)$. To estimate $T_k(A_{i_1}, \dots, A_{i_k})$ when $i_1,\dots,i_k$ are not all the same, we use Lemma~\ref{lem:kAP-energy} and property (3) to obtain
\[ T_k(A_{i_1}, \dots, A_{i_k}) \leq \varepsilon'^{1/6} n^2. \]
Thus the total contributions from these terms are bounded by $\varepsilon'^{1/6} m^{k} n^2$.  Moreover, by Lemma~\ref{lem:kAP-energy} and property (4) we also have
\[ T_k(A_0,A,\dots,A) \leq \varepsilon^{1/6} n^2, \]
and similarly for the other $k-1$ terms involving $A_0$. Thus we have shown that
\[ T_k(A) \leq \sum_{i=1}^m T_k(A_i) + \left( \varepsilon'^{1/6} m^{k} + \varepsilon^{1/6} k \right) n^2 \leq\sum_{i=1}^k T_k(A_i) + \frac{n^2}{100k^3}, \]
if $\varepsilon,\varepsilon'$ are small enough (recall that $m$ depends only on $\varepsilon$). If $c$ is sufficiently small in terms of $\varepsilon,\varepsilon'$, then Lemma~\ref{lem:kAPmax-small-doubling} can be applied to each $A_i$ in view of property (2) to get
\[ T_k(A) \leq \frac{1}{k-1} \sum_{i=1}^m |A_i|^2 + \frac{1}{4}(k-1)m + \frac{n^2}{100k^3} \leq \frac{n}{k-1} \max_{1\leq i \leq m} |A_i| + \frac{1}{4}(k-1)m + \frac{n^2}{100k^3}. \]
Combined with the lower bound for $T_k(A)$ in the hypothesis, this implies that $|A_i| \geq (1-1/10k^2)n$ for some $1 \leq i \leq k$. By~\cite[Theorem 4.1]{GS08}, after dilation we may assume that $A_i$ is contained in an interval of length at most $N/100k$.
\end{proof}

We are now ready to prove Theorem~\ref{thm:maxAP}. In view of Lemma~\ref{lem:kap-rough-structure}, after a suitable dilation and  translation we may assume that the set $A_0 = A \cap [-N/100k, N/100k]$ has size $|A_0| \geq (1-1/10k^2) |A|$. Let
\[ A_1 = A \cap ([-0.1N, 0.1N] \cup [0.4N, 0.6N]) . \]
Write $n_1 = |A_1|$ and $n_2 = |A\setminus A_1| = n-n_1$. Note that for any $k$-AP in $A$, if at least two of its first three terms lie in $A_0$, then it is entirely contained in $A_1$. Thus any $k$-AP in $A$ that is not entirely contained in $A_1$ must have at least one term outside $A_1$ and at least two out of the first three terms outside $A_0$, and the number of these $k$-APs is then bounded by $3kn_2|A\setminus A_0| \leq nn_2/2k$. It follows that
\[ T_k(A) \leq T_k(A_1) + \frac{nn_2}{2k}. \]
Note that $2 \cdot A_1 \subset [-0.2N, 0.2N]$. Thus from Lemma~\ref{lem:simple-rect} and the integer case of Theorem~\ref{thm:maxAP} proved in Section~\ref{sec:kAPmax-Z}, we obtain
\begin{equation}\label{eq:Tk(A1)}
T_k(A_1) = T_k(2 \cdot A_1) \leq T_k([n_1]).
\end{equation}
Using~\eqref{eq:Tk(n)} we arrive at
\[ T_k(A) \leq \frac{n_1^2}{k-1} + \frac{1}{4}(k-1) + \frac{nn_2}{2k}. \]
If $n_2 > 0$, then
\[ \frac{1}{4}(k-1) + \frac{nn_2}{2k} \leq \frac{nn_2}{k} \leq \frac{n^2-n_1^2}{k-1}, \]
provided that $n \geq k(k-1)/2$ (which we may assume). Thus in this case we have
\[ T_k(A) \leq \frac{n^2}{k-1} \leq T_k([n]) \]
as desired. If $n_2 = 0$, then $A = A_1$ and the desired conclusion follows from~\eqref{eq:Tk(A1)}.

\appendix

\section{Proofs of further bounds on Gaussian widths} \label{app:gw}

\subsection{Matching lower bound for 3-APs} \label{app:3ap-gw-lower}

Here we prove the matching lower bound
\[
\GW(T_3 /N) \gtrsim \sqrt{N \log N}
\]
in Theorem~\ref{lem:3ap-fourier-bound}. As in Section~\ref{sec:grad-3ap} we only work out the $\Omega = \ZZ/N\ZZ$ setting here. The $\Omega = [N] \subset \ZZ$ setting can be deduced similarly by embedding $[N]$ in a larger cyclic group to avoid APs from wrapping around zero.

First we show that if $h \colon \ZZ/N\ZZ \to \RR$ is a random function with $h(a) \sim \Normal(0,1)$ i.i.d.\ for $a \in \ZZ/N\ZZ$, then, with probability at least $1/2$, we can find some $f \colon \ZZ/N\ZZ \to [0,1]$ such that 
\begin{equation} \label{eq:3ap-lower}
\anglb{\tfrac{1}{N} \nabla T_3(f), h} \gtrsim \sqrt{N\log N}.
\end{equation}
Indeed, the real components of $\wh h(2s)$ for integers $0 \le s < N/4$ are independent Gaussians with variance $\Theta(1/N)$ (since the Fourier transform is orthogonal), so with probability at least $0.9$ there is some integer $0 < s < N/4$ such that $\Re \wh h(2s) \gtrsim \sqrt{(\log N) / N}$ and furthermore $\wh h(0) = O(1/\sqrt{N})$. Then, setting $f(a) = (1 + \cos(2\pi sa/N))/2$ so that $\wh f (0) = 1/2$, $f(\pm s) = 1/4$ and $\wh f(r) = 0$ for all $r \notin \{0,s,-s\}$, we obtain, by Lemma~\ref{lem:3ap-fourier-id}, 
\[
\frac{1}{N^2} T_3(f,h,f) = \wh f(0)^2 \wh h(0) + \wh f(s)^2 \wh h(-2s) + \wh f(-s)^2 \wh h(2s) = \frac14 \wh h(0) +  \frac{1}{8} \Re \wh h(2s) \gtrsim \sqrt{\frac{\log N}{N}},
\]
and 
\[
\frac{1}{N^2} T_3(h,f,f) = \frac{1}{N^2} T_3(f,f,h) = \wh f(0)^2\wh h(0) = O(N^{-1/2})
\]
Thus \eqref{eq:3ap-lower} holds since $\anglb{\nabla T_3(f),h} = T_3(f,f,h) + T_3(f,h,f) + T_3(h,f,f)$.

Finally, one can convert $f \colon \ZZ/N\ZZ \to [0,1]$ to a $\{0,1\}$-valued function by changing the value of $f$ at $a \in \ZZ/N\ZZ$ to $1$ with probability $f(a)$ and 0 with probability $1-f(a)$. A routine probabilistic argument shows that \eqref{eq:3ap-lower} holds for some $f \colon \ZZ/N\ZZ \to \{0,1\}$, thereby proving $\GW(T_3/N) \gtrsim\sqrt{N\log N}$.

\subsection{Improved upper bound for 4-AP}

Here we prove the final claim in Theorem~\ref{thm:gw-bound} that
\begin{equation} \label{eq:appen-gw4-bound}
\GW(T_4/N) \lesssim N^{3/4} (\log N)^{1/4}.
\end{equation}
As earlier we only discuss the $\Omega = \ZZ/N\ZZ$ setting here. We have
\begin{equation}
	\label{eq:4ap-decomp}
\anglb{\nabla T_4 (f), h} = T_4(h,f,f,f)+T_4(f,h,f,f)+T_4(f,f,h,f)+T_4(f,f,f,h).
\end{equation}

Define the following multiplicative analogue of the finite difference
\[
\Delta_s h (x) := f(x) \ol{h(x+s)}
\]
(since we will be working with real-valued functions, one can ignore the conjugation). The Fourier transform of $\Delta_s h$ controls $T_4$, as shown by the following lemma, which plays a similar role to Lemma~\ref{lem:3ap-fourier-bound} for $T_3$.

\begin{lemma} \label{lem:4ap-fourier-bound}
	For any $f \colon \ZZ/N\ZZ \to [-1,1]$ and any $h \colon \ZZ/N\ZZ \to \RR$,
	\[	
	\frac{1}{N^2} |T_4(h,f,f,f)| \lesssim \Bigl( \frac{1}{N} \sum_{s \in \ZZ/N\ZZ} \| \wh{\Delta_s h} \|_\infty\Bigr)^{1/2}.
	\]
	The same holds with the left-hand side replaced by $T(f,h,f,f)$, $T(f,f,h,f)$, or $T(f,f,f,h)$.
\end{lemma}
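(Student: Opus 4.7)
The plan is to mimic the Fourier-analytic proof of Lemma~\ref{lem:3ap-fourier-bound} but to insert one initial Cauchy--Schwarz step that replaces the single copy of $h$ by a difference $\Delta_s h$, so that $\|\widehat{\Delta_s h}\|_\infty$ emerges naturally. I will treat $T_4(h,f,f,f)$ first and reduce the three remaining cases to it. The first move is the substitution $c = a+b$, which yields
\[
T_4(h,f,f,f) \;=\; \sum_{c} f(c) \sum_{b} h(c-b)\, f(c+b)\, f(c+2b).
\]
Cauchy--Schwarz in $c$ (using $|f(c)| \le 1$), followed by expanding the square and setting $s = b'-b$, groups the resulting six factors into differenced functions as
\[
|T_4(h,f,f,f)|^2 \;\le\; N \sum_{s}\sum_{c,b} \Delta_s h(c-b-s)\, \Delta_s f(c+b)\, \Delta_{2s} f(c+2b).
\]

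For each fixed $s$ I would expand $\Delta_s h$ by Fourier inversion and perform the linear change of variables $(c,b) \mapsto (u,v) := (c+b,\, c+2b)$, whose matrix has determinant $1$ and is therefore a bijection on $(\ZZ/N\ZZ)^2$. Under this change of variables $c-b = 3u - 2v$, so the exponential $\omega^{(c-b-s)r}$ factors as $\omega^{-sr}\omega^{3ur}\omega^{-2vr}$ and the inner $(c,b)$-sum evaluates to
\[
N^2 \sum_r \widehat{\Delta_s h}(r)\, \omega^{-sr}\, \widehat{\Delta_s f}(-3r)\, \widehat{\Delta_{2s} f}(2r).
\]
Pulling $\|\widehat{\Delta_s h}\|_\infty$ outside and applying Cauchy--Schwarz and Plancherel to the remaining sum (using $|f| \le 1$, so that $\|\widehat{\Delta_s f}\|_2, \|\widehat{\Delta_{2s} f}\|_2 \le 1$; a constant loss of at most $\sqrt{6}$ absorbs the cases $\gcd(6,N) > 1$) gives an $O(1)$ bound for this sum. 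Summing over $s$ yields $|T_4(h,f,f,f)|^2 \lesssim N^3 \sum_s \|\widehat{\Delta_s h}\|_\infty$, which is the advertised inequality after dividing by $N^4$ and taking a square root.

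The three remaining positions for $h$ are handled by the same template: substitute $c = a + jb$ for any index $j$ distinct from the position of $h$, so that the factor pulled out through Cauchy--Schwarz is an $f$-factor independent of $b$; for instance, $c = a+2b$ for $T_4(f,h,f,f)$ and $c = a+b$ for $T_4(f,f,h,f)$. Alternatively, the reversal symmetry $T_4(f_0,f_1,f_2,f_3) = T_4(f_3,f_2,f_1,f_0)$ coming from $(a,b) \mapsto (a+3b,-b)$ reduces the four cases to two. The main obstacle lies precisely in this choice of substitution: if one instead isolates the $h$-factor through Cauchy--Schwarz, a $\|h\|_2$ term appears, which is of order $\sqrt{N}$ for a Gaussian $h$ and far too large to be absorbed by the right-hand side. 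The substitutions above guarantee that $h$ appears only inside a difference $\Delta_s h$ after expansion, producing precisely the quantity $\|\widehat{\Delta_s h}\|_\infty$ that the lemma demands.
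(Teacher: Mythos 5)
Your proof is correct and follows essentially the same strategy as the paper's: isolate one $f$-factor by Cauchy--Schwarz, expand the square to introduce the multiplicative differences $\Delta_s$, and then finish with Fourier analysis. The only differences are cosmetic: you pull out $f$ at the second AP-position via $c=a+b$ (the paper pulls out the fourth position), and you carry out the Fourier computation from scratch, whereas the paper recognizes the inner $(c,b)$-sum as $T_3(\Delta_{3s}h,\Delta_{2s}f,\Delta_s f)$ and simply quotes Lemma~\ref{lem:3ap-fourier-bound}. Recognizing the $T_3$ structure is marginally cleaner and reuses the earlier lemma, but your direct change of variables $(c,b)\mapsto(c+b,c+2b)$ followed by Parseval is a perfectly valid rederivation; your handling of $\gcd(2,N)$ and $\gcd(3,N)$ and of the reversal symmetry for the other three positions of $h$ is also sound.
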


\begin{proof}
	We prove the inequality for $T(h,f,f,f)$ as the other cases are analogous. We have
	\begin{align*}
	|T_4(h,f,f,f)|^2
	&= \biggl\lvert \sum_a \biggl(\sum_b h(a-3b)f(a-2b)f(a-b)\biggr)f(a) \biggl\rvert^2
	\\
	&\le N\sum_a \biggl(\sum_b h(a-3b)f(a-2b)f(a-b)\biggr)^2 \qquad \text{\small [Cauchy-Schwarz]}
	\\
	&= N\sum_{a,b,b'} h(a-3b)h(a-3b')f(a-2b)f(a-2b')f(a-b)f(a-b')
	\\
	&= N\sum_{a,b,s} \Delta_{3s}h(a-3b) \Delta_{2s}f(a-2b) \Delta_{s} f(a-b) \qquad \text{\small [by setting $s = b - b'$]}
	\\
	&= N\sum_s T_3(\Delta_{3s} h, \Delta_{2s} f, \Delta_s f)
	\\
	&\lesssim N^3 \sum_s \|\wh{\Delta_{3s} h}\|_\infty \qquad \text{\small [by Lemma~\ref{lem:3ap-fourier-bound} on $T_3$]}
	\\
	& \lesssim N^3  \sum_s \|\wh{\Delta_s h}\|_\infty.
	\end{align*}
	The lemma follows by rearranging.
\end{proof}

Let $h \colon \ZZ/N\ZZ \to \RR$ be a random function with independent standard Gaussian values. Noting that 
\[
\wh{\Delta_s h}(r) = \frac{1}{N}\sum_{a \in \ZZ/N\ZZ} h(a)h(a+s)\omega^{-ar}
\]
is a quadratic form of Gaussians, and using tail bounds for such random variables \cite{HW71} (also \cite[Example 2.12]{BLM13}) we find that for every $s \ne 0$,
\begin{equation} \label{eq:fourier-diff-tail}
	\|\wh{\Delta_{s} h}\|_\infty = O\Bigl(\sqrt{\frac{\log N}{N}}\Bigr)  \quad \text{with probability } 1 - O(N^{-10}).
\end{equation}
We always have $\| \wh{\Delta_s h} \|_\infty \le 1$ for all $s$. Thus\begin{align*}
\GW(T_4/N) 	
&=
\EE_h \sup_{f \colon \ZZ/N\ZZ \to \{0,1\}} \anglb{\nabla T_4(f)/N, h}
\\
&\lesssim \EE_h \sqrt{N} \Bigl( \sum_{s \in \ZZ/N\ZZ} \| \wh{\Delta_s h}\|_\infty \Bigr)^{1/2} && \text{\small [by \eqref{eq:4ap-decomp} and Lem.~\ref{lem:4ap-fourier-bound}]}
\\
&\lesssim N^{3/4} (\log N)^{1/4}. && \text{\small [by \eqref{eq:fourier-diff-tail}]}
\end{align*}

\bigskip

\noindent\textbf{Acknowledgments.} We thank Ben Green and Freddie Manners for helpful discussions.  We also thank the anonymous referee for helpful comments that improved the exposition of the paper.

\bibliographystyle{abbrv}
\bibliography{ldpap_ref}

\end{document}